\documentclass[12pt]{amsart}
\newtheorem{theorem}{Theorem}[section]
\newtheorem{lemma}[theorem]{Lemma}
\newtheorem{cor}[theorem]{Corollary}
\newtheorem{prop}[theorem]{Proposition}
\theoremstyle{definition}

\newtheorem{remark}[theorem]{Remark}

\usepackage{multicol}
\usepackage{graphicx}
\usepackage{enumitem}
\usepackage{amsmath,amssymb,amsthm}
\usepackage{mathtools}

\usepackage{tikz}
\usepackage{tikz-cd}
\newcommand{\arrowIn}{\tikz \draw[-stealth, line width=0.6mm, scale=2] (-1pt,0) -- (1pt,0);}
\usetikzlibrary{arrows.meta} 

\usepackage{mathtools}
\usepackage[all]{xy}
\usepackage[pdftex,%
bookmarks=true,
breaklinks=true,
bookmarksnumbered = true,
colorlinks= true,
urlcolor= green,
anchorcolor = yellow,
citecolor=blue,
]{hyperref}   

\newtheorem{introtheorem}{Theorem}

\numberwithin{equation}{section}

\newcommand{\im}{\ensuremath{\textnormal{im}}}
\newcommand{\id}{\ensuremath{\textnormal{id}}}

\begin{document}

\title[The $R_\infty$-nilpotency degree of some lollipop GBS groups]{The $R_\infty$-nilpotency degree of some lollipop GBS groups}
\author{Wagner Sgobbi}
\address{UFES -- Universidade Federal do Espírito Santo -- Campus Goiabeiras}
\email{wagner.sgobbi@ufes.br}
\author{Dalton Couto Silva}
\address{IFSP -- Instituto Federal de São Paulo -- campus Caraguatatuba}
\email{dalton.couto@ifsp.edu.br}
\author{Daniel Vendrúscolo}
\address{UFSCar -- Universidade Federal de São Carlos}
\email{daniel.vendruscolo@ufscar.br}

\date{\today}


\begin{abstract}
We say that a group $G$ has the $R_{\infty}$ property if the number $R(\varphi)$ of twisted conjugacy classes is infinite for any automorphism $\varphi$ of $G$. The $R_{\infty}$-nilpotency degree is the least integer $c$ such that $G/\gamma_{c+1}(G)$ has property $R_{\infty}$; in case this integer does not exist, we say that $G$ has infinite $R_\infty$-nilpotency degree. In this paper, we determine the $R_{\infty}$-nilpotency degree of some lollipop GBS groups by using different approaches, such as Sylow subgroups, normal forms and presentation of nilpotent quotients, and diagrams of exact sequences. We also compute the $R_\infty$-nilpotency degree of all 2-generated lollipop GBS groups and of all GBS groups whose associated graph is a tree.
\end{abstract}

\maketitle



\section{Introduction}

Let $\varphi$ be an endomorphism of a group $G$. Then $\varphi$ gives rise to a ``twisted conjugacy'' equivalence relation on $G$ given by $x \sim_\varphi y \ \iff\ \exists\ z \in G:\ zx\varphi(z)^{-1}=y$. The number of equivalence classes is denoted by $R(\varphi)$, and we say $G$ has property $R_\infty$ (or simply $G$ has $R_\infty$) if $R(\varphi)$ is infinite for every \emph{automorphism} $\varphi$ of $G$. Twisted conjugacy began with the work of K. Reidemeister \cite{Reidemeister} and has many connections with other areas of mathematics, including fixed point theory \cite{Jiang}. We refer to the papers \cite{DaciDekimpe2} and \cite{FT}, where one can find good discussions about many aspects of the $R_\infty$ property, along with families of groups that were shown to possess it. Here, we would just like to emphasize that this topic is still very active in algebraic topology and group theory. For instance, there are recent proofs of the $R_\infty$ property for the pure Artin braid groups $P_n$ with $n \geq 3$ \cite{MS, DGO}, virtual braid groups \cite{DGO2}, Artin groups \cite{MS, SV} and braid groups of (possibly perforated) orientable surfaces \cite{DGO3}. For non-orientable surfaces, let $P_n(\mathbb{K})$, $n \geq 2$, be the pure braid group of the Klein bottle. In \cite{PS}, it is shown the existence of a normal finite index subgroup $H \lhd Aut(P_n(\mathbb K))$ with $| Aut(P_n(\mathbb K)):H| \leq \left(2 \binom{n}{2}\right)!$ such that $R(\varphi)=\infty$ for every $\varphi \in H$.

In this work, the commutator between two elements $g,h$ of a group $G$ is defined as $[g,h]=ghg^{-1}h^{-1}$ and, recursively, $[g_1,g_2,\ldots,g_n]=[[g_1,\ldots,g_{n-1}],g_n]$. If $A,B \subset G$ are non-empty, the commutator subgroup $[A,B] \leq G$ is the subgroup generated by all commutators $[a,b]$, with $a \in A, b \in B$. Now, let $\gamma_k(G)$ denote the terms of the lower central series of $G$; that is, $\gamma_1(G)=G$ and $\gamma_{k+1}(G)=[\gamma_k(G),G]$, for any $k\geq 1$. We say $G$ is nilpotent if $\gamma_k(G)=1$ for some $k \geq 1$. Twisted conjugacy has been studied for nilpotent groups in the papers \cite{DG, DaciDekimpe2, DaciDekimpe, DaciWong, Nasy, Rom}, most of which deal with nilpotent quotients. Given a group $G$, the groups $G/\gamma_k(G)$, for $k \geq 2$, are clearly nilpotent quotients of $G$. Following \cite{DaciDekimpe}, we define the $R_\infty$-nilpotency degree of $G$ as the least integer $c$ such that $G/\gamma_{c+1}(G)$ has property $R_\infty$. In case none of such quotients have property $R_\infty$, we say the $R_\infty$-nilpotency degree of $G$ is infinite. This degree was first defined in \cite{DaciDekimpe2} and relates to the following problem: it is well-known that if $G$ has a characteristic subgroup $H$ (for example, $H=\gamma_n(G)$, $n \geq 1$) such that $G/H$ has $R_\infty$, then $G$ has $R_\infty$ (Corollary \ref{characteristic}). One may ask then whether the converse is true: given an $R_\infty$ group $G$, is $R_\infty$ still valid for any of its characteristic quotients, in particular, for nilpotent ones? Notice that, since $\gamma_n(G)/\gamma_{n+1}(G)$ is characteristic in $G/\gamma_{n+1}(G)$, property $R_\infty$ for $G/\gamma_n(G)$ implies $R_\infty$ for $G/\gamma_{n+1}(G)$ (Corollary~\ref{characteristic}). Thus, computing the $R_\infty$-nilpotency degree of a given $R_\infty$ group is equivalent to knowing exactly which nilpotent quotients $G/\gamma_n(G)$ have $R_\infty$. Another important observation is that, if $G$ is finitely generated, then its $R_\infty$-nilpotency degree is at least $2$ (Corollary \ref{atleast2}). This applies to all the groups considered in our paper.

This degree has been computed in the last few years for some classes of $R_\infty$ groups, especially by K. Dekimpe and D. L. Gon\c calves. In \cite{DG}, nonabelian finitely generated free groups $F_n$ are studied and it is shown that their $R_{\infty}$-nilpotency degree is $2n$. In fact, this result is what motivated the definition of $R_\infty$-nilpotency degree in \cite{DaciDekimpe2}. There, the same authors study hyperbolic surface groups (which are $R_\infty$ groups, see \cite{Felsh, LevittLustig}) and show that the $R_{\infty}$-nilpotency degree of $\pi_1(S_g)$ is 4, for $S_g$ an orientable surface of genus $g\geq 2$, and that the $R_{\infty}$-nilpotency degree of $\pi_1(N_g)$ is $2(g-1)$, for $N_g$ a connected sum of $g\geq 3$ projective planes. A consequence of their work for Nielsen fixed point theory, for instance, is the discovery of certain nilmanifolds on which every self-homotopy equivalence can be deformed into a fixed point free map.

The specific motivation for our work comes, however, from the non-hyperbolic family of the Baumslag--Solitar groups
\[
BS(m,n)=\left<a,t\ |\ ta^mt^{-1}=a^n \right>,
\]
for $m,n$ non-zero integers, which can be assumed to satisfy $0< m \leq |n|$ (see \cite{DaciFel}). According to \cite{Levitt}, these groups were introduced by G. Baumslag and D. Solitar \cite{BS} as examples of non-Hopfian groups, that is, groups $G$ with non-injective epimorphisms from $G$ to itself. More than 60 years later, this is still a very important family in combinatorial and geometric group theory. In \cite{DaciDekimpe}, the $R_\infty$-nilpotency degree of $BS(m,n)$ is computed.

\begin{introtheorem}[Theorem 5.4 in \cite{DaciDekimpe}] \label{DaciDekimpe} Let $0 < m \leq |n|$ and take $d=\gcd(m,n)$. Let $p$ denote the largest integer such that $2^p \mid 2\frac{m}{d} + 2$. Then, the $R_\infty$-nilpotency degree $r$ of $BS(m,n)$ is given by

\begin{itemize}
    \item If $n < 0$ and $n \neq -m$, then $r = 2$.
\item If $n = -m$ then $r = \infty$.
\item If $n = m$ then $r = \infty$.
\item If $n - m = d$, then $r = \infty$.
\item If $n - m = 2d$, then $2 \leq r \leq p + 2$.
\item If $n - m \geq 3d$, then $r = 2$.
\end{itemize}
\end{introtheorem}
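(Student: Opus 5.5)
Since the statement is quoted from \cite{DaciDekimpe}, my plan follows the standard strategy for nilpotent quotients. For a finitely generated nilpotent group $N=G/\gamma_{c+1}(G)$ and an automorphism $\varphi$, I would use that $R(\varphi)=\infty$ iff some induced map on a factor $\gamma_i(N)/\gamma_{i+1}(N)$ has eigenvalue $1$, working modulo torsion. More precisely, I would use the torsion-free ranks and the fact that the induced maps on the Lie ring factors are determined by the map on $N^{ab}\otimes\mathbb{Q}$. So the first step is to compute $BS(m,n)^{ab}=\mathbb{Z}\oplus\mathbb{Z}_{|n-m|}$, generated by $t$ and $a$, with $a$ torsion unless $n=m$. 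Any automorphism then acts on $t$ modulo torsion by $t\mapsto t^{\pm1}\cdot(\text{torsion})$.

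Next I would sort the cases by what happens on $G/\gamma_2\otimes\mathbb{Q}\cong\mathbb{Q}$, where $\varphi$ acts by $\pm1$.
\begin{itemize}
\item If the sign $+1$ is forced, then $R(\varphi)=\infty$ already on the abelianization, giving degree $1$.
\end{itemize}
This is excluded by Corollary~\ref{atleast2}, since the rank-one part admits $t\mapsto t^{-1}$ whenever that extends. So the question is whether $t\mapsto t^{-1}$ (modulo torsion) lifts to automorphisms of $G/\gamma_{c+1}(G)$. The second factor $\gamma_2/\gamma_3$ is generated by $[a,t]$, and $\varphi$ acts on it by the product of the actions on $a$ and $t$.
\begin{itemize}
\item For $n<0$, $n\neq -m$, and for $n-m\ge 3d$, I would show that any automorphism of $G/\gamma_3$ sending $t\mapsto t^{-1}$ forces a contradiction via the relation $ta^mt^{-1}=a^n$, read modulo $\gamma_3$ as $a^{n-m}=[t,a^m]$-type identities. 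This should give $r=2$. Here the torsion orders $|n-m|/d\ge3$ prevent the sign flip $a\mapsto a^{\pm1}$ from being compatible, so the flip is simply not available.
\item For $n=\pm m$ and $n-m=d$, I would exhibit an explicit automorphism of $G$ itself, or compatible ones on each quotient, with finite Reidemeister number on every $G/\gamma_{c+1}$. Candidates are $t\mapsto t^{-1}$, $a\mapsto a^{\pm1}$ suitably modified. The lower central series quotients are then finite cyclic, or of controlled rank, and the induced map has no eigenvalue $1$ on the free parts. This gives $r=\infty$.
\item For $n-m=2d$, the $2$-torsion allows the flip only up to depth governed by the $2$-adic valuation of $2m/d+2$. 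I would compute $\gamma_k/\gamma_{k+1}$ as $\mathbb{Z}_2$-type pieces and show $R_\infty$ holds by level $p+2$.
\end{itemize}

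The main obstacle is this last case, and more generally controlling torsion in $\gamma_k/\gamma_{k+1}$. For torsion factors, eigenvalue arguments fail and one must check directly that the lifted automorphism exists at every level. For the bound $p+2$, I would try induction on $k$, tracking the exponent of $2$ in the iterated relation $[a,t,\dots,t]$ computed from $a^{n-m}\equiv[\cdot]$.
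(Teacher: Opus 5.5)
The paper does not prove this statement; it is quoted from \cite{DaciDekimpe}. So your plan has to stand on its own, and the closest in-paper analogues are Theorems~\ref{casoc=|d|}, \ref{c-d=mdc} and~\ref{pirulitosindice2}.

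Your reduction is sound. For $n\neq m$ each $\gamma_i/\gamma_{i+1}$ with $i\ge 2$ is finite (as in Proposition~\ref{torfinita}). Hence for $N=G/\gamma_{c+1}(G)$ the group $\tau(N)$ is finite, $N/\tau(N)\cong\mathbb{Z}$ is generated by $t$, and $R(\varphi)<\infty$ exactly when $\varphi$ induces $-1$ on it. For $n=\pm m$ the automorphism $a\mapsto a^{-1}$, $t\mapsto t^{-1}$ of $G$ does the job, as in Theorem~\ref{casoc=|d|}.

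The $r=2$ cases also work, but the obstruction is not a ``sign flip of $a$'': $a$ may go to $a^kw^l$ for any unit $k$ mod $n-m$. In $G/\gamma_3$ we have $w=[a,t]$ central, $a^{m-n}=w^m$ and $w^{n-m}=1$. A flip $\varphi(t)=a^it^{-1}w^j$ forces $\varphi(w)=w^{-k}$. Applying $\varphi$ to $a^{m-n}=w^m$ forces $\varphi(w)^m=w^{km}$. So $(n-m)\mid 2km$, i.e.\ $\tfrac{n-m}{d}\mid 2$. You must also check that $n<0$, $n\neq -m$ gives $\tfrac{|n-m|}{d}=\tfrac{|n|}{d}+\tfrac{m}{d}\ge 3$.

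The genuine gaps are in the other two cases.

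For $n-m=d$, an automorphism of $G$ itself cannot work. Automorphisms of a non-unimodular $BS(m,n)$ preserve the modular homomorphism, hence fix $t$ modulo torsion. ``Suitably modified'' gives no way to build automorphisms of the quotients. The missing idea is that $a^d=[t,a^m]$ with $d\mid m$ gives inductively $a^d\in\gamma_k(G)$ for all $k$. Hence $G/\gamma_k(G)\cong(\mathbb{Z}_d*\mathbb{Z})/\gamma_k$, on which $a\mapsto a$, $t\mapsto t^{-1}$ is an automorphism. This is exactly the argument of Theorem~\ref{c-d=mdc}.

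For $n-m=2d$ you give no argument for the actual content, $r\le p+2$, i.e.\ $R_\infty$ for $G/\gamma_{p+3}(G)$. Moreover, the factors are not ``$\mathbb{Z}_2$-type pieces'' when $d$ has an odd prime factor: already $\gamma_2/\gamma_3\cong\mathbb{Z}_{2d}$. So you must first split off the odd-order part of the finite nilpotent group $\tau(N)$, which is characteristic.

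The number $p$ enters $2$-adically. Write $m'=m/d$ and $n'=n/d$ (both odd), and let $v_2$ be the $2$-adic valuation. Then $v_2(n'^2-m'^2)=v_2\bigl(2(2m'+2)\bigr)=p+1$. Suppose $t$ acts by multiplication by $n'/m'$ on a cyclic $2$-group section $C$ that automorphisms preserve and on which torsion acts trivially. Then a flip forces $(n'/m')^2\equiv 1 \pmod{|C|}$. For $BS(1,3)$ this is literally $G/\gamma_{k+1}\cong\mathbb{Z}_{2^k}\rtimes_3\mathbb{Z}$, with a flip iff $2^k\mid 8$, so $r=4=p+2$. In general, one route is to produce such a $C$ of order at least $2^{p+2}$ inside $G/\gamma_{p+3}$. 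That is where the real work lies, and it is absent from your proposal.

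Finally, your own $\gamma_3$ computation shows that when $n-m=2d$ the condition $w^{2m}=1$ holds automatically, since $2d\mid 2m$. So $a\mapsto a$, $t\mapsto t^{-1}$ induces an automorphism of $G/\gamma_3$, and $r\ge 3$ in this case. This contradicts Theorem~\ref{refinement}: in $BS(3,9)/\gamma_3$ one has $t^{-1}a^3ta^{-9}=w^3a^{-6}=w^6=1$. The error is that the proof of Theorem~\ref{notdivideb} treats $v^{-k_{11}}$ as if it were already in normal form.
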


The authors of \cite{DaciDekimpe} comment that, after their results on $BS(m,n)$, it would be a natural task to investigate the $R_\infty$-nilpotency degree for the family of (non-elementary) Generalized Baumslag--Solitar groups (or GBS groups). It is relevant to say that, in a recent work \cite{SSV}, we added one more important family of generalizations of Baumslag--Solitar groups to this discussion, namely, the Generalized Solvable Baumslag--Solitar groups $\Gamma_n$. Let $n\geq 2$ be an integer with prime decomposition $n={p_1}^{y_1}\cdots{p_r}^{y_r}$, where the $p_i$ are pairwise distinct and $y_i>0$. We consider the Generalized Solvable Baumslag--Solitar group
\[
\Gamma_n=\left<a,t_1,\ldots,t_r\ |\ t_it_j=t_jt_i,\ i\neq j,\ t_ia{t_i}^{-1}=a^{{p_i}^{y_i}},\ i=1,\ldots,r \right>.
\] In particular, if $n=p^y$ for a single prime $p$, then $\Gamma_n=BS(1,n)$.

\begin{introtheorem}[\cite{SSV}, Theorem 4.5]\label{teoprincipal}
Let an integer $n \geq 1$ have a prime decomposition with at least two prime numbers involved. Then the $R_\infty$-nilpotency degree of the Generalized Solvable Baumslag--Solitar group $\Gamma_n$ is infinite.
\end{introtheorem}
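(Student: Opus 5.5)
The strategy is to show that for every $c \geq 1$ the nilpotent quotient $\Gamma_n/\gamma_{c+1}(\Gamma_n)$ admits an automorphism $\varphi$ with $R(\varphi)<\infty$.

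\textbf{Step 1: the lower central series.} Write $N=\langle\langle a\rangle\rangle \cong \mathbb{Z}[1/n]$, so that $\Gamma_n = \mathbb{Z}[1/n]\rtimes \mathbb{Z}^r$, with $t_i$ acting by multiplication by $q_i:=p_i^{y_i}$. Then $[t_i,a]=a^{q_i-1}$, and hence $\gamma_2(\Gamma_n)$ is the ideal of $N$ generated by the elements $q_i-1$. We have $\gcd_i(q_i-1)=g$ with $g\in\{1,2\}$, because at least two distinct primes occur and $q_i-1$ shares no prime with $q_i$. Inductively, $\gamma_{k+1}(\Gamma_n)= I^k N$, where $I=(q_1-1,\dots,q_r-1)$. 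Since $N$ is $\mathbb{Z}[1/n]$, it follows that $N/I^kN \cong \mathbb{Z}/g_k$ is a finite cyclic group. Here $g_k$ divides a power of $2$ if $g=2$, and $g_k=1$ if $g=1$. Consequently
\[
G_c:=\Gamma_n/\gamma_{c+1}(\Gamma_n)\cong \mathbb{Z}/m_c \rtimes \mathbb{Z}^r,
\]
where $m_c$ is finite, coprime to the odd $q_i$, and $\mathbb{Z}/m_c$ is a finite $2$-group (or trivial). The precise value of $m_c$ is needed only to the extent of knowing that it is finite and that each $q_i$ acts as a unit of $\mathbb{Z}/m_c$.

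\textbf{Step 2: the automorphism.} If $m_c=1$, then $G_c=\mathbb{Z}^r$ with $r\geq 2$, and $-\mathrm{id}$ already has $R=\det(2I)<\infty$; we are done. In general, consider
\[
\varphi(a)=a,\qquad \varphi(t_i)=t_i^{-1} \text{ (or } \varphi(t)=Mt\text{)}.
\]
This is not a well-defined automorphism when $\mathbb{Z}/m_c$ is nontrivial and $q_i\not\equiv q_i^{-1}$. Instead, I would choose an integer matrix $M\in GL_r(\mathbb{Z})$ without eigenvalue $1$ that preserves the action. Since the action factors through the finite abelian group $(\mathbb{Z}/m_c)^\times$, the kernel $K\leq\mathbb{Z}^r$ of the homomorphism $\mathbb{Z}^r\to(\mathbb{Z}/m_c)^\times$ has finite index, and it suffices to find $M$ with $\rho\circ M=\rho$. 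The requirement is then that $M\equiv \mathrm{id}$ on $\mathbb{Z}^r/K$, i.e.\ $M$ lies in a congruence subgroup.

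Because $r\geq 2$, such congruence subgroups contain hyperbolic matrices without eigenvalue $1$ (for instance, powers of $\begin{pmatrix}2&1\\1&1\end{pmatrix}$ padded with a block $\pm\mathrm{id}$). Note that a $-\mathrm{id}$ block is not in the congruence subgroup unless the relevant quotient has exponent $2$, so it is safer to use hyperbolic blocks only, which requires handling odd $r$. For $r=3$, I would use a $3\times 3$ hyperbolic companion matrix, some power of which lies in the congruence subgroup. Setting $\varphi(a)=a$ and $\varphi(t)=t^{M}$ then defines an automorphism of $G_c$.

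\textbf{Step 3: counting classes.} Use the extension $\mathbb{Z}/m_c\to G_c\to\mathbb{Z}^r$ together with the addition formula: $R(\varphi)\leq |\mathbb{Z}/m_c|\cdot R(\bar\varphi)$, where $R(\bar\varphi)=|\det(I-M)|<\infty$. Hence $R(\varphi)<\infty$ for every $c$, which gives an infinite $R_\infty$-nilpotency degree.

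The main obstacle is Step 1, namely the structure of $\gamma_{c+1}$ and the finiteness of the torsion part; this is where the hypothesis of two primes is essential. For a single prime, $q-1$ has more prime factors and the torsion still grows, but the rank is $r=1$ and $\mathbb{Z}$ has only $\pm\mathrm{id}$. The choice of $M$ in a congruence subgroup and the verification that $\varphi$ respects the relations are routine.
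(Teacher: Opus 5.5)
This statement is not proved in the paper. It is quoted from \cite{SSV} (Theorem 4.5 there) as motivation, so there is no in-paper argument to compare yours with. Judged on its own, your strategy is sound: write $\Gamma_n/\gamma_{c+1}(\Gamma_n)$ as a finite cyclic group extended by $\mathbb{Z}^r$, fix $a$, and act on $\mathbb{Z}^r$ by a matrix with no eigenvalue $1$ lying in a congruence subgroup that preserves the action. It gives a complete proof once one false claim in Step 1 is repaired and one justification in Step 3 is supplied.

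\textbf{Step 1.} The claim $\gcd_i(q_i-1)\in\{1,2\}$ is wrong. For $n=7\cdot 13$ you get $\gcd(6,12)=6$, and for $n=2^4\cdot 31$ you get $\gcd(15,30)=15$. What is true is the following:
\begin{itemize}
\item $I=g\,\mathbb{Z}[1/n]$ with $g=\gcd_i(q_i-1)$.
\item Any prime dividing $g$ divides every $q_i-1$, so it does not divide $n$.
\item Hence $\gamma_{c+1}(\Gamma_n)=g^c\,\mathbb{Z}[1/n]$ and $\Gamma_n/\gamma_{c+1}(\Gamma_n)\cong \mathbb{Z}/g^c\rtimes\mathbb{Z}^r$, with $t_i$ acting by the unit $q_i \bmod g^c$.
\end{itemize}
The torsion part is in general not a $2$-group. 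As you note yourself, Steps 2 and 3 only use that it is finite and that the $q_i$ act as units, so nothing downstream breaks.

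Your remark that Step 1 is ``where the hypothesis of two primes is essential'' is misplaced. For $BS(1,q)$ the same computation gives $\mathbb{Z}/(q-1)^c\rtimes\mathbb{Z}$, which is also finite-by-cyclic. The hypothesis enters only through $r\ge 2$ in Step 2. There you need $M\in GL_r(\mathbb{Z})$ with $M\equiv I \pmod e$, where $e$ is the exponent of $(\mathbb{Z}/g^c)^\times$, and with no eigenvalue $1$. Such an $M$ is a suitable power of a matrix with no root-of-unity eigenvalues: use blocks $\begin{pmatrix}2&1\\1&1\end{pmatrix}$, plus one companion block of $x^3-x-1$ when $r$ is odd. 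For $r=1$ the only candidate is $M=-1$. It would require $q^2\equiv 1 \pmod{(q-1)^c}$, which fails as soon as $q\ge 4$ and $c\ge 2$.

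\textbf{Step 3.} You cannot invoke Lemma \ref{cavelemma}(2), because $A=\mathbb{Z}/g^c$ is in general not central in $G_c$. Some $t_i$ acts nontrivially on it once $g\ge 2$ and $c\ge 2$. The bound $R(\varphi)\le g^c\,|\det(I-M)|$ is still correct, but the reason is that the kernel is finite. Every $\varphi$-class lying over a given $\bar\varphi$-class meets a fixed coset $bA$. So each fibre of $\mathcal{R}(\varphi)\to\mathcal{R}(\bar\varphi)$ has at most $|A|$ elements.

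With these corrections your argument is complete.
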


Therefore, having succeeded in showing that the $R_{\infty}$-nilpotency degree of $\Gamma_n$ is infinite, we began focusing on GBS groups. Recall that a GBS group is a finitely generated group $G$ which acts (with no edge inversions) on a tree with all edge and vertex stabilizers being infinite cyclic. Equivalently \cite{Serre}, a GBS group is the fundamental group $G$ of a finite graph of groups $\Gamma$ whose vertex and edge groups are all infinite cyclic (what we call a ``graph of $\mathbb{Z}$'s''). We describe this terminology as outlined in \cite{Levitt}: choose a finite connected graph $\Gamma$. A pair $\epsilon=(e,\overline{e})$ of opposite edges is called a non-oriented edge of $\Gamma$. We associate to each vertex $v$ (resp. to each non-oriented edge $\epsilon$) an infinite cyclic group $G_v\simeq \mathbb{Z}$ (resp. $G_\epsilon\simeq \mathbb{Z}$). Choose respective generators $x_v$ for all vertex groups $G_v$, as well as injective homomorphisms from the edge groups to the vertex groups of the vertices of the edge. These homomorphisms are multiplications by non-zero integers. An oriented edge $e$ thus has a label $\lambda_e \in
\mathbb{Z} - \{0\}$, describing the inclusion of the edge group $G_e$ into the vertex group $G_{o(e)}$
at the origin of $e$. We visualize $\Gamma$ as a labeled graph, with the label $\lambda_e$ pictured near the origin $o(e)$. A non-oriented edge carries two labels, one near either endpoint $o(e), o(\overline{e})$ of $\epsilon$ and we say that $\epsilon$ (or $e$) is a $(\lambda_e,\lambda_{\overline{e}})$-edge. We say an edge is a loop if its endpoints are equal, a segment if they are distinct.

Given such a graph of $\mathbb{Z}$'s, one can describe a finite presentation of the associated GBS group in the following way: choose a maximal subtree $T \subset \Gamma$ and fix one generator $t_\epsilon$ for each non-oriented edge $\epsilon$ not in $T$. The generators of $G$ are the $x_v$ for every vertex $v$ (called \emph{vertex letters}) and the $t_\epsilon$ above (called \emph{stable letters}). Each non-oriented edge $\epsilon=(e,\overline{e})$ of $\Gamma$ contributes one relation. If $\epsilon$ is contained in $T$, the relation is $x_{o(e)}^{\lambda_e} = x_{o(\overline{e})}^{\lambda_{\overline{e}}}$. If $\epsilon$ is not in $T$, the relation is $t_ex_{o(e)}^{\lambda_e}t_e^{-1} = x_{o(\overline{e})}^{\lambda_{\overline{e}}}$. GBS groups are known to be torsion-free \cite{Levitt}.

In the case where the graph $\Gamma$ consists of only one vertex and one $(m,n)$-edge attached to it, the group $G=BS(m,n)$ is a Baumslag--Solitar group (hence the name GBS groups). In particular, the groups $BS(1,1) \simeq \mathbb{Z}^2$ and the fundamental group of the Klein bottle $\pi_1(\mathbb{K})\simeq BS(1,-1)$ are called \emph{elementary} GBS groups, along with the GBS group $G=\mathbb{Z}$ (the case where $\Gamma$ is a single point). All other GBS groups are called non-elementary, and it is well-known
that these groups possess property $R_\infty$ \cite[Proposition 2.7]{Levitt}, so our aim is to investigate their $R_\infty$-nilpotency degree. We begin with the case where the graph $\Gamma$ has topological rank zero, that is, $\Gamma$ is a tree, and show that its $R_{\infty}$-nilpotency degree is infinite (Theorem \ref{rank0}). Since the groups $BS(m,n)$ (the simplest cases of topological rank 1) were studied in \cite{DaciDekimpe}, we focused on the lollipop GBS groups, which are the simplest cases of GBS groups with topological rank 1 that are not Baumslag--Solitar groups. The lollipop GBS group $G$ will be denoted by $\textnormal{loll}(a,b,c,d)$ and is the GBS group associated with the following graph.

\begin{center}
    \begin{figure}[ht]
\hfill
\begin{tikzpicture}[scale=1.5, very thick]

\foreach \j in {0,2}
{\filldraw[black] (\j,0) circle (2pt);};

\draw[thick] (0,0)--(2,0);
\begin{scope}[shift={(3,0)}]
    \def\r{1} 
    \def\k{0.55228475} 

    \draw[thick] (\r,0) .. controls (\r,\k*\r) and (\k*\r,\r) .. (0,\r) 
                .. controls (-\k*\r,\r) and (-\r,\k*\r) .. (-\r,0) 
                .. controls (-\r,-\k*\r) and (-\k*\r,-\r) .. (0,-\r) 
                .. controls (\k*\r,-\r) and (\r,-\k*\r) .. (\r,0)node[sloped, pos=0.99, allow upside down]{\arrowIn};;; 
\end{scope}

\node at (0.25,-0.2) {\large $x$};
\node at (0.25,0.2) {\large \textcolor{blue}{$a$}};
\node at (1.75,0.2) {\large \textcolor{blue}{$b$}};
\node at (1.75,-0.2) {\large $y$};
\node at (2.25,0.2) {\large \textcolor{blue}{$c$}};
\node at (2.25,-0.2) {\large \textcolor{blue}{$d$}};
\node at (4.25,0) {\large $t$};

\end{tikzpicture}
\hspace*{\fill}
\end{figure}
\end{center}
Here, $a,b,c,d$ are non-zero integers and $x,y,t$ denote the group generators. In this case, the group $G$ has a presentation
\[
G=\textnormal{loll}(a,b,c,d)=\langle x,y,t \: | \: x^a=y^b, \: ty^ct^{-1}=y^d\rangle.
\]

In terms of group splittings \cite{Serre}, let $A=\langle x^a\rangle=\langle y^b \rangle \leq G$. Then $G$ can be seen either as an HNN extension of an amalgamation $\mathbb{Z}*_A\mathbb{Z}$, or as an amalgamation of the form $\mathbb{Z}*_A BS(c,d)$. Because of the isomorphisms $BS(d,c) \simeq BS(c,d) \simeq BS(-c,-d)$, we can assume without loss of generality that $0< c\leq |d|$ (see \cite{DaciFel}). Since $\textnormal{loll}(a,b,c,d) \simeq \textnormal{loll}(-a,-b,c,d)$, we may also suppose $a \geq 1$. Furthermore, we assume $a>1$ throughout the paper, otherwise $G$ would be (isomorphic to) a Baumslag--Solitar group.


This paper is organized as follows. In Section \ref{preliminaries}, we recall some facts on Reidemeister numbers, lower central series and nilpotent quotients. In Section \ref{firstcases}, we use suitable diagrams of exact sequences to compute the $R_\infty$-nilpotency degree of GBS groups associated with trees and of lollipop GBS groups $\textnormal{loll}(a,b,c,d)$, where either $c=|d|$ or $|c-d|=\gcd(c,d)$. All these groups are shown to have infinite $R_\infty$-nilpotency degree, as one can see below.

\begin{introtheorem}[Theorem \ref{rank0}]
Let $G$ be a GBS group with the associated graph $\Gamma$. If $\Gamma$ is a tree, then the $R_{\infty}$-nilpotency degree of $G$ is infinite.
\end{introtheorem}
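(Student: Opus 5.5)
The plan is to show that for every $c\geq 1$ the nilpotent quotient $N_c=G/\gamma_{c+1}(G)$ has an automorphism with finite Reidemeister number. That automorphism will be induced from one explicit automorphism of $G$. The argument has three steps: build the automorphism, describe the structure of $N_c$, and bound the Reidemeister number through an exact sequence.

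\emph{Step 1: the automorphism.} Since $\Gamma$ is a tree, we take $T=\Gamma$, so there are no stable letters. Then $G$ is generated by the vertex letters $x_v$, and every relation has the form $x_{o(e)}^{\lambda_e}=x_{o(\overline{e})}^{\lambda_{\overline{e}}}$. Define $\varphi(x_v)=x_v^{-1}$ for every vertex $v$. Each relation maps to $x_{o(e)}^{-\lambda_e}=x_{o(\overline{e})}^{-\lambda_{\overline{e}}}$, which is the inverse of the original relation and hence holds in $G$. So $\varphi$ is a well-defined endomorphism, and since $\varphi^2=\id$ it is an automorphism. Because $\gamma_{c+1}(G)$ is characteristic, $\varphi$ induces an automorphism $\varphi_c$ of $N_c$.

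\emph{Step 2: structure of $N_c$.} Write $n$ for the number of vertices, so $G$ has $n$ generators and $n-1$ relations. Each relation is a linear equation with nonzero coefficients between the generators at the two ends of an edge, and $\Gamma$ is connected. Hence $G^{ab}\otimes\mathbb{Q}\cong\mathbb{Q}$: every generator is rationally a multiple of any fixed one, and the map sending each $x_v$ to a suitable rational multiple shows the rank is exactly $1$. Therefore $G^{ab}\cong\mathbb{Z}\oplus F$ with $F$ finite.

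For $k\geq 2$ there is a standard epimorphism from $G^{ab}\otimes\cdots\otimes G^{ab}$ ($k$ factors) onto $\gamma_k(G)/\gamma_{k+1}(G)$, and it factors through a quotient in which the image of $\Lambda^2$ of the free part dies. So each such factor is finitely generated and torsion: concretely, $[g,h]$ with $g,h$ rationally proportional modulo $\gamma_2$ has finite order modulo $\gamma_3$, and the same argument works inductively. Thus $\gamma_2(G)/\gamma_{c+1}(G)$ is finite. Consequently the torsion subgroup $\tau$ of $N_c$ is a finite characteristic subgroup with $N_c/\tau\cong\mathbb{Z}$. On this quotient, $\varphi_c$ induces $-\id$, since each $x_v$ maps to a rational multiple of a generator and is sent to its inverse.

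\emph{Step 3: counting classes.} For the exact sequence $1\to\tau\to N_c\to\mathbb{Z}\to 1$ with $\varphi_c(\tau)=\tau$, the standard counting estimate gives $R(\varphi_c)\leq |\tau|\cdot R(-\id_{\mathbb{Z}})$. Here $R(-\id_{\mathbb{Z}})=|\mathbb{Z}/2\mathbb{Z}|=2$. To see the estimate, note that each twisted class of $\mathbb{Z}$ pulls back to a coset union, and that union splits into at most $|\tau|$ twisted classes of $N_c$. Hence $R(\varphi_c)<\infty$ for every $c$, so no $N_c$ has $R_\infty$, and the $R_\infty$-nilpotency degree of $G$ is infinite.

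The step I expect to be the main obstacle is the finiteness of $\gamma_2(G)/\gamma_{c+1}(G)$ in Step 2, which needs the tensor-power surjection handled carefully. If it proves awkward, I would use an alternative: a finitely generated nilpotent group whose abelianization has rank $1$ has Hirsch length $1$, because the Hirsch length of a nilpotent group is bounded in terms of its Mal'cev completion, which is generated in degree one. That Hirsch length bound gives the same conclusion.
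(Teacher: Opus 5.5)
Your argument is correct, but it decomposes the nilpotent quotients the opposite way from the paper.

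\emph{The paper's route.} It uses Lemmas \ref{powers} and \ref{centralsubgroup} to find an infinite cyclic subgroup $C\le Z(G)$ containing a power of every vertex generator. So $G/\gamma_n(G)$ has a \emph{central} cyclic subgroup $\pi(C)$ with \emph{finite} quotient. It then applies the product bound of Lemma \ref{cavelemma}, item 2), which requires centrality.

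\emph{Your route.} You use $\textnormal{rk}\,G^{ab}=1$ and the finiteness of $\gamma_2/\gamma_3$, and then of every $\gamma_k/\gamma_{k+1}$ with $k\ge 2$ exactly as in Proposition \ref{torfinita}. This makes $N_c$ \emph{finite}-by-$\mathbb{Z}$. You then use the bound $R(\varphi)\le |A|\,R(\overline{\varphi})$ for a finite invariant normal subgroup $A$, which needs no centrality. The paper's lemma does not cover this bound, so write that step out. Let $p\colon N_c\to N_c/\tau$. If $p(b)=p(z)\,p(b_0)\,\overline{\varphi}(p(z))^{-1}$, then $z^{-1}b\,\varphi_c(z)\in b_0\tau$. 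Hence every $\varphi_c$-class lying over $[p(b_0)]_{\overline{\varphi}}$ meets the single coset $b_0\tau$, giving at most $|\tau|$ classes per fibre.

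\emph{What each buys.} Your route is more general and more self-contained. It only uses that $G$ is finitely generated with $\textnormal{rk}\,G^{ab}=1$ and admits an automorphism acting as $-1$ on $G^{ab}/\tau(G^{ab})$. It needs neither an explicit central element nor the cited central-extension inequality. The paper's route uses more of the tree structure, namely a genuinely central element of $G$ itself, in exchange for a ready-made counting lemma.

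\emph{The anticipated obstacle.} It is not a real difficulty, and the Hirsch-length fallback is unnecessary. By Lemma \ref{powers} and Proposition \ref{commut3}, $[x_v,x_w]^\alpha\gamma_3=[x_v^\alpha,x_w]\gamma_3=[x_w^\beta,x_w]\gamma_3=\gamma_3$. These commutators generate $\gamma_2/\gamma_3$ by Proposition \ref{commutmks}.
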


\begin{introtheorem}[Theorem \ref{casoc=|d|}]\label{casoc=|d|intro}
If $c=|d|$, then the $R_{\infty}$-nilpotency degree of $G=\textnormal{loll}(a,b,c,d)$ is infinite.
\end{introtheorem}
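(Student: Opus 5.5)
The plan is to exhibit, for every $k\geq 2$, an automorphism of $N_k=G/\gamma_k(G)$ with finite Reidemeister number. I will use a single automorphism of $G$ and pass to the quotients. Define $\varphi\colon G\to G$ by $x\mapsto x^{-1}$, $y\mapsto y^{-1}$, $t\mapsto t^{-1}$.

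First I check that $\varphi$ respects the relations. The relation $x^a=y^b$ clearly goes to $x^{-a}=y^{-b}$. For the second relation there are two cases.
\begin{itemize}
\item If $d=c$, the relation says $t$ commutes with $y^c$, so $t^{-1}y^{-c}t=y^{-c}=y^{-d}$.
\item If $d=-c$, then $ty^ct^{-1}=y^{-c}$ gives $t^{-1}y^{-c}t=y^{c}=y^{-d}$.
\end{itemize}
Hence $\varphi$ is an endomorphism, and since $\varphi^2=\id$ it is an automorphism. Being an automorphism, $\varphi$ preserves every $\gamma_i(G)$. So it induces automorphisms $\varphi_k$ of $N_k$ and $\varphi^{(i)}$ of each factor $\gamma_i(G)/\gamma_{i+1}(G)$. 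The latter are finitely generated abelian groups, central in $G/\gamma_{i+1}(G)$.

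The mechanism for finiteness is the standard inequality from the preliminaries for $\varphi$-invariant central extensions $1\to K\to E\to Q\to 1$:
\[
R(\varphi_E)\leq R(\varphi_K)\,R(\varphi_Q).
\]
Applying it inductively along $N_k\to N_{k-1}\to\cdots\to G^{ab}$, it suffices to show that each $R(\varphi^{(i)})$, $1\leq i\leq k-1$, is finite.

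For $i=1$: $\varphi$ acts as $-1$ on $G^{ab}$. So $R(\varphi^{(1)})=|\mathrm{coker}(\id-\varphi^{(1)})|=|G^{ab}/2G^{ab}|$, which is finite because $G^{ab}$ is finitely generated.

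For $i\geq 2$ I will show that $\gamma_i(G)/\gamma_{i+1}(G)$ is finite. Then $R(\varphi^{(i)})$ is bounded by its order, whatever $\varphi^{(i)}$ does. This matters because $\varphi$ acts on these factors by $(-1)^i$, so for even $i$ the induced map is the identity, and finiteness of the factor is the only way out.

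\textbf{Case $d=c$.} Modulo $\gamma_3$ commutators are bilinear. Then:
\begin{itemize}
\item $a[x,y]=[x^a,y]=[y^b,y]=0$;
\item $c[y,t]=[y^c,t]=0$;
\item $a[x,t]=[y^b,t]=b[y,t]$, hence $ac[x,t]=0$.
\end{itemize}
So $\gamma_2/\gamma_3$ is finite. For general $i$ the cleanest route is rational: in $N_k\otimes\mathbb{Q}$ (the Malcev completion of $N_k$ modulo torsion) the relations make $y$ central, since $y^c$ commutes with $t$ and with $x,y$. Then $x^a=y^b$ makes $x$ a rational power of $y$, hence central too. So the rationalization is abelian, and every $\gamma_i/\gamma_{i+1}$ with $i\geq2$ is torsion. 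Being finitely generated, each such factor is finite.

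\textbf{Case $d=-c$.} In $N_k\otimes\mathbb{Q}$ the relation $ty^ct^{-1}=y^{-c}$ gives $y^{2c}\in[G,G]$ rationally. Inductively, conjugation modulo each successive $\gamma_j$ pushes $y$ into deeper terms, so $y$ lies rationally in $\gamma_k$, that is, $y$ is torsion in $N_k$. Then $x$ is torsion as well, since $x^a=y^b$. The rationalization is therefore generated by $t$ alone and is cyclic, so again all factors for $i\geq 2$ are finite.

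Combining the three steps gives $R(\varphi_k)<\infty$ for every $k$. Hence no $G/\gamma_k(G)$ has $R_\infty$, and the degree is infinite.

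The main obstacle is the rigorous version of the rational argument: showing that the higher lower-central factors are finite. In particular, the $d=-c$ induction that $y$ becomes torsion in every $N_k$ needs care. I would try to do this directly, by proving by induction on $j$ that the images of $[y,\ldots]$-type generators of $\gamma_j/\gamma_{j+1}$ are killed by a fixed power of $2c$. The bilinearity computation above is the model for that induction.
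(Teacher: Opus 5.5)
Your proof is correct, and its skeleton is the paper's. You use the same involution $x\mapsto x^{-1}$, $y\mapsto y^{-1}$, $t\mapsto t^{-1}$ and the same induction along the central extensions $1\to\gamma_i/\gamma_{i+1}\to G/\gamma_{i+1}\to G/\gamma_i\to 1$ via Lemma \ref{cavelemma}(2). The base case is $-\id$ on $G/\gamma_2$, and finiteness of $\gamma_i/\gamma_{i+1}$ for $i\geq 2$ does the rest.

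Where you differ is in proving that finiteness. The paper only computes $\gamma_2/\gamma_3$; for $d=-c$ this goes via $[y,t]^{-2c}\gamma_3=[[t,y^c],t]\gamma_3=\gamma_3$. It then applies Proposition \ref{torfinita}, which propagates finiteness upward because $\gamma_{i+1}/\gamma_{i+2}$ is generated by brackets of generators of $\gamma_i/\gamma_{i+1}$ with generators of $G$. So your bilinearity computation for $d=c$ already finishes that case, with no rational argument needed.

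You instead show that $N_k/\tau(N_k)$ is abelian, which forces $\gamma_2(N_k)\subseteq\tau(N_k)$ to be finite. For $d=c$ you use uniqueness of roots in torsion-free nilpotent groups. Strictly, in $G$ it is $y^b=x^a$ that commutes with $x$, not $y^c$, but rationally that suffices. For $d=-c$ you show that $y$ is torsion.

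The step you flag as the main obstacle is in fact immediate and integral. The identity $[t,y^{mc}]=y^{-2mc}$ holds exactly in $G$, so by induction $y^{2^{j-1}c}\in\gamma_j(G)$ for every $j$. This is the same device the paper uses for $y^m$ in the proof of Theorem \ref{c-d=mdc}.

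Your route is more conceptual: the torsion-free quotient of $G/\gamma_k$ is abelian, and even cyclic when $d=-c$. It does, however, import Malcev-type facts about finitely generated nilpotent groups, namely finiteness of $\tau$ and unique roots. The paper's route is pure commutator calculus.
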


\begin{introtheorem}[Theorem \ref{c-d=mdc}]\label{c-d=mdcintro}
If $|c-d|=\gcd(c,d)$, then the $R_{\infty}$-nilpotency degree of $G=\textnormal{loll}(a,b,c,d)$ is infinite.
\end{introtheorem}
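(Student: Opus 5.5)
The plan is to exhibit, for every $k\geq 2$, an automorphism of $G/\gamma_k(G)$ with finitely many twisted conjugacy classes. Write $e=\gcd(c,d)$, $c=em'$, $d=en'$, so that $n'-m'=\pm1$.

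\textbf{Step 1: $y^e$ dies in every nilpotent quotient.} From the relation $ty^ct^{-1}=y^d$ we get $[t,y^c]=y^{c}y^{-c}\cdot ty^ct^{-1}y^{-c}=y^{d-c}=y^{\pm e}$. Put $u=y^e$, so that $u^{\pm1}=[t,u^{m'}]$. Work in $N=G/\gamma_k(G)$ and suppose $u\in\gamma_j(N)$ for some $j\geq 1$ (this holds for $j=1$). Standard commutator calculus gives $[t,u^{m'}]\equiv[t,u]^{m'}$ modulo $\gamma_{j+2}(N)$, since $u\in\gamma_j(N)$ and $[t,u]\in\gamma_{j+1}(N)$. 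Hence $u\in\gamma_{j+1}(N)$. By induction $u\in\gamma_k(N)=1$, so $y^e=1$ in $G/\gamma_k(G)$ for every $k$. I expect this to be the main point needing care: one must check that the commutator identity really yields the inductive step, and that the sign $\pm$ is harmless.

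\textbf{Step 2: identify the nilpotent quotients.} Let $K=\langle x,y\mid x^a=y^b,\ y^e=1\rangle$ and $H=K*\langle t\rangle$. In $H$ we have $y^c=y^d=1$, because $e$ divides both $c$ and $d$. So the relation $ty^ct^{-1}=y^d$ holds in $H$, and $H$ is a quotient of $G$. Conversely, by Step 1 the map $G\to G/\gamma_k(G)$ factors through $H$. It follows that $G/\gamma_k(G)\cong H/\gamma_k(H)$. On $H$ define $\varphi$ by $x\mapsto x$, $y\mapsto y$, $t\mapsto t^{-1}$. This is clearly an automorphism of $H$ (it is an involution), so it induces an automorphism $\varphi_k$ of $H/\gamma_k(H)\cong G/\gamma_k(G)$.

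\textbf{Step 3: the structure of $N=G/\gamma_k(G)$ and finiteness of $R(\varphi_k)$.} In $N$ we have $y^e=1$ and $x^{ae}=y^{be}=1$, so $x$ and $y$ have finite order. In a finitely generated nilpotent group the torsion elements form a finite characteristic subgroup $\tau(N)$, which therefore contains the normal closure of $\langle x,y\rangle$. The quotient $N/\tau(N)$ is torsion-free and generated by the image of $t$, hence it is infinite cyclic. (It is nontrivial because $G\to\mathbb{Z}$, $t\mapsto1$, $x,y\mapsto0$, factors through $N$.)

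The induced automorphism on $N/\tau(N)\cong\mathbb{Z}$ is $-1$, whose Reidemeister number is $|\mathbb{Z}/2\mathbb{Z}|=2$. From the exact sequence $1\to\tau(N)\to N\to\mathbb{Z}\to1$, with $\tau(N)$ characteristic, the preliminary results on Reidemeister numbers give
\[
R(\varphi_k)\leq |\tau(N)|\cdot R(-\id_{\mathbb{Z}})=2|\tau(N)|<\infty.
\]
Concretely, every fibre over a twisted class of the quotient meets at most $|\tau(N)|$ classes. Thus no $G/\gamma_k(G)$ has property $R_\infty$, and the $R_\infty$-nilpotency degree of $G$ is infinite.
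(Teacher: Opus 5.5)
Your proof is correct. Steps 1 and 2 are the paper's own argument: the paper also shows $y^{\gcd(c,d)}\in\gamma_n(G)$ for all $n$ and identifies $G/\gamma_n(G)$ with $H/\gamma_n(H)$ for $H=\langle x,y,t\mid x^a=y^b,\ y^{\gcd(c,d)}=1\rangle$. Your inductive step does not need the commutator expansion: $u^{m'}\in\gamma_j(N)$ already gives $[t,u^{m'}]\in\gamma_{j+1}(N)$.

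The routes diverge in Step 3, where you split $N$ the opposite way.

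\emph{The paper's route.} An induction using Lemma~\ref{comutatpotencomut} shows that $t^{(a\gcd(c,d))^{n-2}}$ is central in $H/\gamma_n(H)$. So $N$ has a central infinite cyclic subgroup with finite quotient. The paper then applies the central-extension bound of Lemma~\ref{cavelemma}(2) to the automorphism inverting $x,y,t$.

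\emph{Your route.} You use the finite characteristic subgroup $\tau(N)$, whose quotient is infinite cyclic generated by $\bar t$, together with an automorphism acting as $-1$ on that quotient. This avoids the centrality computation entirely and is shorter and more transparent.

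The price is that $\tau(N)$ contains $x$ and $y$ and is in general not central. So Lemma~\ref{cavelemma}(2), as stated in the paper, does not give your inequality, and your appeal to ``the preliminary results'' is inaccurate. What you need is the fibre count you allude to. It holds for any finite normal $\varphi$-invariant $A\lhd B$:
\begin{itemize}
\item If $[b']_\varphi$ lies over $[p(b)]_{\overline{\varphi}}$, choose $z\in B$ and $a\in A$ with $b'=a\,zb\varphi(z)^{-1}$.
\item Then $z^{-1}b'\varphi(z)=(z^{-1}az)b$, so every class in the fibre has the form $[a'b]_\varphi$ with $a'\in A$.
\item Hence $R(\varphi)\le |A|\,R(\overline{\varphi})$.
\end{itemize}
With these two lines written out, your argument is complete.
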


In Section \ref{normalformsection}, we obtain a presentation for $G/\gamma_3(G)$ by working with the free nilpotent quotient $F/\gamma_3(F)$, and then use normal forms for polycyclic groups \cite{Sims} to derive criteria under which the $R_{\infty}$-nilpotency degree of $\textnormal{loll}(a,b,c,d)$ is $2$.

\begin{introtheorem}[Theorem \ref{pirulitosindice2}]\label{|c-d|oddintro}
If $G=\textnormal{loll}(a,b,c,d)$ satisfies $\gcd(a,c-d)=1=\gcd(c,d)$, and the number $1<|c-d|$ is odd, then the $R_{\infty}$-nilpotency degree of $G$ is $2$.
\end{introtheorem}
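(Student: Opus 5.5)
We must show two things: $G/\gamma_2(G)$ does not have $R_\infty$ (this is automatic, since by Corollary \ref{atleast2} the degree of a finitely generated group is at least $2$), and $G/\gamma_3(G)$ does have $R_\infty$. So the whole proof reduces to the second claim. The plan is to build an explicit polycyclic presentation of $N=G/\gamma_3(G)$, describe $\mathrm{Aut}(N)$ enough to read off the induced action on the abelianization, and produce, for every automorphism, an eigenvalue $1$ on a torsion-free characteristic quotient. Corollary \ref{characteristic} then gives $R(\varphi)=\infty$.

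\textbf{Step 1: the abelianization and $N$.} In $G^{ab}$ the relations are $ax=by$ and $(c-d)y=0$ (additively), with $t$ free. Set $k=|c-d|$. Since $\gcd(a,k)=1$ and $k$ is odd with $k>1$, the torsion is controlled. We get $G^{ab}\simeq \mathbb{Z}^2\oplus T$, where the free part is generated by $t$ and a class $u$ built from $x,y$. In fact $\langle x,y\rangle^{ab}/\langle (c-d)y\rangle$ is $\mathbb{Z}\oplus\mathbb{Z}/\gcd$-type, so I expect $T\simeq\mathbb{Z}/k$ generated essentially by the image of $y$ (using $\gcd(a,k)=1$). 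Following the section's method, I will write $N$ as a quotient of the free nilpotent group $F/\gamma_3(F)$ on $x,y,t$. Imposing the two relators modulo $\gamma_3$ and using bilinearity of commutators gives relations on $[x,y],[x,t],[y,t]$. For example, $ty^ct^{-1}y^{-d}$ yields $y^{c-d}[t,y]^{c}\in\gamma_3$-type identities, and $x^a=y^b$ gives $[x,y]^a=1$, $[x,t]^a=[y,t]^b$, and so on. Using $\gcd(c,d)=1$, I will then reduce $\gamma_2(N)$ to a small abelian group, likely cyclic of order dividing $k$ together with a free part. Normal forms for polycyclic groups \cite{Sims} give a consistent presentation.

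\textbf{Step 2: automorphisms.} An automorphism $\varphi$ of $N$ induces $\bar\varphi$ on $N^{ab}/T\simeq\mathbb{Z}^2$, given by a matrix $M\in GL_2(\mathbb{Z})$ in the basis $(u,t)$. It also induces a map on $\gamma_2(N)$, whose relevant part is generated by $[u,t]$ and on which it acts by $\det M$. The key constraint comes from the torsion: the element $y$ of order $k$ in $N^{ab}$ relates to $[t,y]$ through the relation $y^{c-d}=[t,y]^{-c}$-like identity. Compatibility of $\varphi$ with this relation should force $M$ to have a special shape, for instance upper triangular with $t\mapsto t^{\pm1}$ modulo $u$ and torsion. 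Here the oddness of $k$ is used: it prevents $-1\equiv 1 \pmod k$, so the sign $\epsilon$ of $t$ is detected on the cyclic torsion part, where $t$ acts on $y$ by multiplication by $d/c$.

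\textbf{Step 3: conclusion.} With $M$ triangular with diagonal entries $\epsilon_1,\epsilon_2\in\{\pm1\}$, I will check the cases. If some $\epsilon_i=1$, then $\bar\varphi$ on $\mathbb{Z}^2$ has eigenvalue $1$, so $R(\bar\varphi)=\infty$ and hence $R(\varphi)=\infty$. If $\epsilon_1=\epsilon_2=-1$, then $\det M=1$, so $\varphi$ acts trivially on the infinite cyclic part of $\gamma_2(N)$ generated by $[u,t]$. The standard addition formula for central extensions (as in \cite{DaciDekimpe}) gives $R(\varphi)=\infty$ when the restriction to the central torsion-free part fixes a nontrivial element.

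\textbf{Main obstacle.} The hardest step is Step 2: pinning down the form of $M$ from the relations of $N$, and showing that the case of diagonal entries $(-1,-1)$ still forces infinitely many classes. First I would verify that $[u,t]$ has infinite order in $N$, using the normal form computation. Without that, the $(-1,-1)$ case would only yield finite $R$.
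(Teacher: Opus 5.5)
Your reduction to proving that $N=G/\gamma_3(G)$ has $R_\infty$ is correct. However, Step 1 miscomputes the abelianization, and Steps 2--3 are built on that error.

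\textbf{The structural error.} In $G^{ab}$ we have $y^{c-d}=1$ and $x^{a(c-d)}=y^{b(c-d)}=1$, so $x$ and $y$ are both torsion. The relators $(a,-b,0)$ and $(0,c-d,0)$ span a rank-$2$ sublattice of $\mathbb{Z}^3$. Hence $G^{ab}\simeq\mathbb{Z}\oplus T$, with $|T|=a|c-d|$ and the free part generated by $t$ alone. There is no infinite-order class $u$ built from $x,y$, and no matrix $M\in GL_2(\mathbb{Z})$.

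The same happens one level down. From $[x,y]^a=[x^a,y]=1$, $[x,y]^{c-d}=[x,w^c]=1$ and $\gcd(a,c-d)=1$ we get $[x,y]=1$. Meanwhile $v=[x,t]$ and $w=[y,t]$ satisfy $v^a=w^b$ and $w^{c-d}=1$. So $\gamma_2(N)$ is finite, and the element ``$[u,t]$'' whose infinite order you planned to verify does not exist. Your treatment of the $(-1,-1)$ case, which needs a torsion-free central subgroup, therefore collapses.

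\textbf{What actually has to be proved.} $\tau(N)=\langle x,y,v,w\rangle$ is finite and $N/\tau(N)\simeq\mathbb{Z}$ is generated by $t$. An automorphism inducing $t\mapsto t^{-1}$ on this quotient would have finite Reidemeister number, since each fibre of $\mathcal{R}(\varphi)\to\mathcal{R}(\bar\varphi)$ has at most $|\tau(N)|$ elements. So you must show that every automorphism induces $+1$ on $N/\tau(N)$, and your proposal never does this. Your heuristic that ``$t$ acts on $y$ by $d/c$'' cannot detect a sign, because $d\equiv c \pmod{c-d}$.

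\textbf{The missing computation.}
\begin{enumerate}
\item Write $\varphi(y)=x^{k_{21}}y^{k_{22}}v^{k_{24}}w^{k_{25}}$ inside the abelian characteristic subgroup $\tau(N)$, and $\varphi(t)\equiv t^{\epsilon}$ modulo $\tau(N)$.
\item Apply $\varphi$ to $y^{c-d}=w^c=[y,t]^c$. Bilinearity in class $2$ gives $[\varphi(y),\varphi(t)]=v^{\epsilon k_{21}}w^{\epsilon k_{22}}$.
\item Compare normal forms (Lemma \ref{normalformlemma}) and use $\gcd(a,c-d)=1$ to get $k_{21}=k_{24}=0$, and then $ck_{22}(1-\epsilon)\equiv 0 \pmod{c-d}$.
\item If $\epsilon=-1$, the oddness of $c-d$ and $\gcd(c,c-d)=\gcd(c,d)=1$ force $k_{22}\equiv 0$. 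Then the induced automorphism of $G^{ab}$ kills the image of $y$, which is nontrivial since $|c-d|>1$. This is a contradiction.
\item Hence $\epsilon=1$, and $R(\varphi)=\infty$ by Lemma \ref{cavelemma}(1).
\end{enumerate}
This step is exactly where the hypotheses ($|c-d|$ odd, $\gcd(c,d)=1$, $\gcd(a,c-d)=1$) are used, and it is the argument the paper gives.
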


\begin{introtheorem}[Theorem \ref{notdivideb}]\label{notdividebintro}
If $G=\textnormal{loll}(a,b,c,d)$ satisfies $\gcd(a,c-d)=1=\gcd(a,b)$, $|c-d|>\gcd(c,d)$ and $c-d$ does not divide $b$, then the $R_{\infty}$-nilpotency degree of $G$ is $2$.
\end{introtheorem}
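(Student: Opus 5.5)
To prove the statement I would show that $G/\gamma_3(G)$ has property $R_\infty$. By Corollary \ref{atleast2} the degree is at least $2$, so this forces it to equal $2$. The plan has four steps: describe the abelianization, obtain a presentation of $N=G/\gamma_3(G)$, determine how automorphisms of $N$ act on its layers, and finally exhibit infinitely many twisted classes.

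\textbf{Step 1: the abelianization.} We have
\[
G^{ab}=\langle x,y,t \mid ax=by,\ (c-d)y=0\rangle .
\]
Since $\gcd(a,c-d)=1$, the subgroup generated by $x,y$ is $\mathbb{Z}^2/\langle (a,-b),(0,c-d)\rangle$. Its torsion part and its free part should be computable explicitly. Put $m=|c-d|>\gcd(c,d)\ge 1$. Because $\gcd(a,b)=1$, the lattice quotient is $\mathbb{Z}\oplus\mathbb{Z}/m$. Hence $G^{ab}\cong \mathbb{Z}^2\oplus \mathbb{Z}/m$, where one free generator is $t$ and the other is a generator $u$ with $x\mapsto bu$ and $y\mapsto au$, up to torsion.

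\textbf{Step 2: a presentation of $N=G/\gamma_3(G)$.} Following the method of Section \ref{normalformsection}, I would compute a presentation from $F/\gamma_3(F)$ with $F$ free on $x,y,t$. The second layer $\gamma_2(G)/\gamma_3(G)$ is generated by $[x,y]$, $[x,t]$, $[y,t]$. The relations $x^a=y^b$ and $ty^ct^{-1}=y^d$ impose the following. Modulo $\gamma_3$ the commutators are bilinear, so $a[x,y]=0$ and $a[x,t]=b[y,t]$. Also $[t,y^c]\,y^{c-d}\equiv 1$, so $y^{c-d}$ is central modulo $\gamma_3$, which gives $(c-d)[y,\cdot]=0$.

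\textbf{Step 3: action of automorphisms.} Any $\varphi\in\mathrm{Aut}(N)$ induces an automorphism of the free part $\mathbb{Z}^2$ of $N^{ab}$. It also induces a compatible map on $\gamma_2/\gamma_3$, which is determined by the determinant of that $2\times 2$ matrix on the free-part commutator $[u,t]$. Now use the standard criterion: $R(\varphi)=\infty$ as soon as the induced map on a characteristic torsion-free abelian section has eigenvalue $1$. This follows from the exact-sequence diagrams and Corollary \ref{characteristic}.

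\textbf{Step 4: forcing eigenvalue $1$.} This is the heart of the argument. The hypothesis that $c-d$ does not divide $b$ should prevent $\varphi$ from acting on $\bar t$ by anything other than $\pm\bar t$ modulo $u$. The reason is that $t$ is distinguished because $y$ (equivalently $u$, up to the unit $a$ mod $m$) has order dividing $m$ in the torsion, while $x$ relates to $u$ through $b$. I would try to show that the matrix on the free part has the form $\begin{pmatrix}\epsilon_1&*\\0&\epsilon_2\end{pmatrix}$ with $\epsilon_i=\pm1$. If some $\epsilon_i=1$, we are done at level $2$. If $\epsilon_1=\epsilon_2=-1$, the determinant is $1$, so $\varphi$ acts trivially on the infinite cyclic part of $\gamma_2(N)/\gamma_3(N)$ generated by $[u,t]$. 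Then $R(\varphi)=\infty$ via the central extension.

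\textbf{Main obstacle.} The step I expect to be hardest is showing that $\gamma_2(N)$ really contains an infinite-order element and, above all, establishing the triangular form of the matrix. For the triangular form I would first try to use the normal form from Section \ref{normalformsection}: determine which elements of $N$ have centralizer of the expected size, or which elements commute modulo torsion with a given one. If the condition $c-d\nmid b$ is what prevents the torsion from mixing with $x$, it should make that characteristic description work.
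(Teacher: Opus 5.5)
Your plan fails at Step~1, and Steps~3--4 depend on it. The lattice $\langle (a,-b),(0,c-d)\rangle\subset\mathbb{Z}^2$ has full rank, since its determinant is $a(c-d)\neq 0$. So $x$ and $y$ are torsion in $G^{ab}$, and with $\gcd(a,b)=1$ you get $G^{ab}\cong\mathbb{Z}\oplus\mathbb{Z}/a|c-d|$, not $\mathbb{Z}^2\oplus\mathbb{Z}/m$. Your element $u$ (with $x=bu$, $y=au$) is torsion because $(c-d)y=0$. Consequently $N=G/\gamma_3(G)$ has a finite torsion subgroup $\tau(N)=\langle x,y,[x,t],[y,t]\rangle$ with $N/\tau(N)\cong\mathbb{Z}$ generated by $t$. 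Also $\gamma_2(N)$ is finite; your own Step~2 relations $a[x,y]=0$, $(c-d)[y,t]=0$, $a[x,t]=b[y,t]$ already show this. So there is no $2\times 2$ matrix, no infinite-order $[u,t]$, and no determinant escape in Step~4. An automorphism acts on $N/\tau(N)$ by $\pm1$. The sign $+1$ gives $R(\varphi)=\infty$. The sign $-1$ gives $R(\varphi)\le 2|\tau(N)|<\infty$, because each fibre of $\mathcal{R}(\varphi)\to\mathcal{R}(\overline{\varphi})$ over a finite normal subgroup contains at most $|\tau(N)|$ classes. The whole proof therefore consists in excluding $t\mapsto t^{-1}$ modulo torsion. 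The paper attempts this with normal-form congruences.

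That exclusion is impossible under the stated hypotheses, so no argument can close the gap: the statement is false. Take $G=\textnormal{loll}(3,1,1,3)\cong BS(3,9)$ (put $y=x^3$). All hypotheses hold: $\gcd(3,-2)=1=\gcd(3,1)$, $|c-d|=2>1=\gcd(c,d)$, and $-2\nmid 1$. In $N$, with $v=[x,t]$ central, you have $x^6=[t,x^3]=v^{-3}$. Hence $v^6=[x^6,t]=[v^{-3},t]=1$, and so $t^{-1}x^3t=v^3x^3=v^6x^9=x^9$. Therefore $x\mapsto x$, $t\mapsto t^{-1}$ induces an involutive automorphism of $N$. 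It acts by $-1$ on $N/\tau(N)$, and $\tau(N)=\langle x,v\rangle$ is finite, so $R<\infty$ and the degree is at least $3$. The same example contradicts Theorem~\ref{refinement} for $BS(3,9)$.

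The paper's proof breaks at the congruence $k_{45}\equiv -k_{12}\pmod{c-d}$. When $k_{11}\neq 0$, rewriting $v^{-k_{11}}$ in normal form uses $v^a=w^b$ and adds $-b$ to the $w$-exponent. The final congruence then reads $b(k_{11}+k_{44}-a)\equiv 0$, which is vacuous in the case $k_{11}+k_{44}=a$. In the example, $k_{11}=1$, $k_{12}=0$, $k_{44}=2$ and $k_{45}=1$.

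Here is what is true. Since $\gcd(a,b)=1$, the element $z=x^\beta y^\alpha$ with $\alpha a+\beta b=1$ gives $N\cong BS(ac,ad)/\gamma_3$. An automorphism inverting $t$ modulo torsion exists if and only if $(d-c)\mid 2c$, that is, if and only if $|c-d|\in\{\gcd(c,d),2\gcd(c,d)\}$. So the conclusion holds exactly when $|c-d|>2\gcd(c,d)$, and in that case the hypotheses $\gcd(a,c-d)=1$ and $c-d\nmid b$ are not needed.
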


We then use Theorem \ref{notdivideb} to refine Theorem \ref{DaciDekimpe} by 12,5\% (or $\frac{1}{6} \cdot \frac{3}{4}$ of it).

\begin{introtheorem}[Theorem \ref{refinement}, a 12,5\% refinement of Theorem \ref{DaciDekimpe}]
Let $0<m\leq |n|$ with $m\neq n$ and suppose $d=\gcd(m,n)>1$. If $n-m=2d$ and $m,n$ are not both even, then the $R_{\infty}$-nilpotency degree of $BS(m,n)$ is $2$.
\end{introtheorem}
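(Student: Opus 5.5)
The plan is to realize $BS(m,n)$ as a lollipop GBS group and then apply Theorem \ref{notdivideb}. Write $m=dm'$, $n=dn'$ with $\gcd(m',n')=1$, so that $n'-m'=2$. Since $m',n'$ are coprime and differ by $2$, both are odd.

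\textbf{Step 1 (identify $BS(m,n)$ with a lollipop group).} In $BS(m,n)=\langle y,t\mid ty^mt^{-1}=y^n\rangle$, put $z=y^d$. Adjoining a new generator $x$ with $x^{d}=y$ does not give an isomorphism, so instead use the converse graph move. Consider $\textnormal{loll}(a,b,c,d')$ with $b=1$: the relation $x^a=y$ eliminates $y$, and we get
\[
\langle x,t\mid tx^{ac}t^{-1}=x^{ad'}\rangle = BS(ac,ad').
\]
We therefore take $a=d>1$, $b=1$, $c=m'$, $d'=n'$, so that $BS(m,n)\simeq \textnormal{loll}(d,1,m',n')$. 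Here $0<m'\le |n'|$, and $a>1$ because $d>1$. This is a Tietze transformation, so the isomorphism is exact.

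\textbf{Step 2 (check the hypotheses of Theorem \ref{notdivideb}).} Four conditions are needed.
\begin{itemize}
\item $\gcd(a,b)=\gcd(d,1)=1$, which holds trivially.
\item $c-d'=m'-n'=-2$ and $\gcd(c,d')=\gcd(m',n')=1$, so $|c-d'|=2>1=\gcd(c,d')$.
\item $c-d'=-2$ must not divide $b=1$, and it does not.
\item $\gcd(a,c-d')=\gcd(d,2)=1$, which requires $d$ to be odd.
\end{itemize}
Since $m'$ and $n'$ are both odd, $d$ is odd exactly when $m=dm'$ and $n=dn'$ are not both even. This is precisely the hypothesis of the statement, so $\gcd(d,2)=1$ holds.

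\textbf{Step 3 (conclude).} Theorem \ref{notdivideb} now gives $R_\infty$-nilpotency degree $2$ for $\textnormal{loll}(d,1,m',n')\simeq BS(m,n)$. The degree is an isomorphism invariant, since $\gamma_k$ is characteristic, so $BS(m,n)$ also has degree $2$.

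The only genuine point of care is Step 1. We must confirm that the paper's standing normalizations ($a>1$, $0<c\le|d|$) are satisfied. We must also confirm that $b=1$ is allowed in Theorem \ref{notdivideb}, i.e., that the theorem does not secretly assume $|b|>1$. If it did, one would instead use $b=-1$ or exploit the symmetry $\textnormal{loll}(a,b,c,d)\simeq\textnormal{loll}(-a,-b,c,d)$ to keep the hypotheses intact.
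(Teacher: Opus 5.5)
Your proof is correct and is essentially the paper's argument: realize $BS(m,n)\simeq\textnormal{loll}(d,1,m/d,n/d)$ by eliminating $y=x^d$, then apply Theorem~\ref{notdivideb}. Your parity check ($m',n'$ are coprime and differ by $2$, so both are odd, hence $\gcd(d,2)=1$ exactly when $m,n$ are not both even) supplies the detail the paper leaves to the reader; your worry about $b=1$ is unnecessary, since the paper's only standing assumptions are $a>1$ and $0<c\le|d|$, both satisfied here (and the stray remark about adjoining $x$ with $x^d=y$ can simply be deleted).
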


In Section \ref{Sylow}, we use the normal forms from Section \ref{normalformsection} and basic facts about Sylow subgroups \cite{Hungerford}. We show that the $R_{\infty}$-nilpotency degree of $\textnormal{loll}(a,b,c,d)$ is $2$ for some new cases under mild divisibility conditions. Recall that a \emph{Hall divisor} of an integer $n$ is defined as a divisor $m$ of $n$ such that $m$ and $n/m$ are coprime. For an integer $k\geq 1$, we denote by $P(k)$ the (finite) set of prime numbers involved in the prime decomposition of $k$.

\begin{introtheorem}[Theorem \ref{Sylow1}]\label{sylowintro}
    Let $G=\textnormal{loll}(a,b,c,d)$. Suppose that $n=\gcd(a,b,c-d)=1$, that $m=\gcd(c,d)$ is a Hall divisor of $|c-d|$ and that $P(a)\cap P(|c-d|) \subset P(m)$. Then, the $R_{\infty}$-nilpotency degree of $G$ is $2$ in any of the following cases:
\begin{itemize}
    \item $d<0$ and $d \neq -c$;
    \item $d>0$ and $d-c \geq 3m$.
\end{itemize}
\end{introtheorem}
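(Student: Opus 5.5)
The plan is to show that $N=G/\gamma_3(G)$ has property $R_\infty$. Since $G$ is finitely generated, Corollary~\ref{atleast2} then gives that the $R_\infty$-nilpotency degree is exactly $2$.

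\textbf{Step 1: the abelianization.} From the relations $ax=by$ and $(c-d)y=0$ we get $G^{ab}\cong \mathbb{Z}\oplus T$. Here the free part is generated by the image of $t$, and $T$ is the finite group generated by $x,y$. Every automorphism $\varphi$ of $N$ induces $\bar\varphi$ on $N^{ab}=G^{ab}$, and $\bar\varphi$ acts on $G^{ab}/T\cong\mathbb{Z}$ by $\varepsilon=\pm1$. If $\varepsilon=1$, then $R(\bar\varphi)=\infty$, hence $R(\varphi)=\infty$ by the standard quotient argument (Corollary~\ref{characteristic} and its underlying lemma). So everything reduces to ruling out $\varepsilon=-1$, i.e.\ showing that no automorphism of $N$ satisfies $\varphi(t)\equiv t^{-1}$ modulo the torsion and $\gamma_2$.

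\textbf{Step 2: a prime to localize at.} Choose a prime $p\mid |c-d|$ with $p\nmid m$. I expect one can take $p$ odd.
\begin{itemize}
\item For $d<0$, $d\neq -c$: $|c-d|/m=(c+|d|)/m\ge 3$.
\item For $d-c\ge 3m$: $|c-d|/m\ge3$ directly.
\end{itemize}
In both cases $m$ is a Hall divisor, so $|c-d|/m>1$ is coprime to $m$ and supplies such primes. By $P(a)\cap P(|c-d|)\subset P(m)$ we also have $p\nmid a$. Using the normal form presentation of $G/\gamma_3(G)$ from Section~\ref{normalformsection}, the torsion of $N$ is finite, and I pass to the characteristic quotient $\bar N$ obtained by killing the $p'$-parts of the torsion subgroup (the Hall $p'$-subgroup of the finite nilpotent torsion subgroup is characteristic). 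In $\bar N$:
\begin{itemize}
\item $x$ becomes a power of $y$ up to central terms, because $p\nmid a$ and $\gcd(a,b,c-d)=1$;
\item the relevant torsion is a $p$-group generated by $y$ and the central commutator $z=[t,y]$.
\end{itemize}

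\textbf{Step 3: the key computation.} Modulo $\gamma_3$ the relation $ty^ct^{-1}=y^d$ becomes $y^{d-c}=z^{c}$, with $z$ central. Suppose $\varphi(t)=t^{-1}u$ and $\varphi(y)=y^k w$, where $p\nmid k$ (since $\varphi$ is an automorphism) and $w$ lies in the appropriate central or $x$-part. Bilinearity of commutators modulo $\gamma_3$ gives $\varphi(z)=[t^{-1}u,y^kw]=z^{-k}\cdot(\text{terms from }[u,y])$, and the $[u,y]$ terms are controlled by the torsion order of $u$. Applying $\varphi$ to $y^{d-c}=z^c$ yields $y^{k(d-c)}=z^{-ck}$. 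Comparing with $y^{k(d-c)}=z^{ck}$ gives $z^{2ck}=1$.

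Since $p\nmid 2ck$ (using $p$ odd and $p\nmid c$, which follows from $p\nmid m$ and $p\mid c-d$), this forces the $p$-part of $z$ to be trivial. I would then check, via the normal forms of Section~\ref{normalformsection}, that $z$ has nontrivial $p$-order in $\bar N$: the order of $[t,y]$ in $\gamma_2/\gamma_3$ should carry the full $p$-part of $(c-d)/m$. This contradiction rules out $\varepsilon=-1$, completing the proof.

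\textbf{Main obstacle.} The hardest part is twofold. First, I must verify that $z$ really has nontrivial $p$-part in $\gamma_2(N)/\gamma_3(N)$, i.e.\ that killing the $p'$-torsion does not kill it; this needs the explicit presentation of $G/\gamma_3(G)$. Second, I must control the correction terms coming from $u$ and $w$ in Step 3. A further concern is the case where $(c-d)/m$ is a power of $2$, so that only $p=2$ is available and the factor $2$ in $z^{2ck}$ is not invertible. There I would try to refine the computation by working modulo a higher power of $2$, mirroring why $n-m=2d$ is exceptional in Theorem~\ref{DaciDekimpe}.
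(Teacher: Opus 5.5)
Your route differs from the paper's and is essentially sound, but as written it has one real gap. You expect that $p$ can be taken odd, and that is false, while your final step depends on it. For instance, $\textnormal{loll}(3,2,1,5)$ satisfies all the hypotheses with $m=1$ and $|c-d|=4$. So $p=2$ is forced and ``$p\nmid 2ck$'' fails. You flag the power-of-$2$ case but leave it open.

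No finer computation modulo higher powers of $2$ is needed. What your argument actually requires is $p^e\nmid 2ck$, where $e=v_p(c-d)$ and $p^e$ is the order of $\bar z$. So take $p$ odd whenever $|c-d|/m$ has an odd prime factor. Otherwise $|c-d|/m=2^f$ with $f\ge 2$, because $|c-d|/m\ge 3$ in both cases of the statement. Then $m$ is odd, since it is a Hall divisor, so $e=f\ge 2$ and $c$, $d$, $k$ are all odd. Now $\bar z^{2ck}=1$ contradicts $\bar z$ having order $2^e\ge 4$. The only situation in which your computation gives nothing is $p=2$ with $e=1$, and under the hypotheses that prime is never the only one available.

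Your other two ``obstacles'' are routine given Lemma~\ref{normalformlemma}. That lemma gives $|\tau(N)|=a^2|c-d|^2$, and $[y,t]$ has order exactly $|c-d|$. Since $p\nmid ac$ and $[x,y]=1$ in $N$, the relations $x^a=y^b$, $y^{c-d}=[y,t]^c$ and $[x,t]^a=[y,t]^b$ show that $\tau(\bar N)$ is cyclic of order $p^{2e}$, generated by $\bar y$. Hence $\bar z$ has order $p^e$ and $\varphi(\bar y)=\bar y^k$ on the nose. Moreover $\varphi(\bar z)=\bar z^{-k}$ with no correction terms, because $\tau(\bar N)$ is abelian.

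The paper argues differently. It maps $G/\gamma_3(G)$ onto $BS(c/m,d/m)/\gamma_3$. It then uses the Hall-divisor condition and $P(a)\cap P(|c-d|)\subset P(m)$ to see that the kernel, of order $a^2m^2$, is a Hall subgroup of the torsion and hence characteristic. Finally it quotes Theorem~\ref{DaciDekimpe}. That is short, but it outsources the Reidemeister computation and needs the full Hall hypotheses.

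You kill only the $p'$-part of the torsion, land on $\mathbb{Z}_{p^{2e}}\rtimes\mathbb{Z}$ with $t$ acting by $d/c$, and compute by hand, so your proof is self-contained. It also shows that a single prime suffices. Once repaired, your argument only needs some $p\mid c-d$ with $p\nmid am$ and either $p$ odd or $p^2\mid c-d$. Even $n=1$ is unnecessary, since $p\nmid a$ already kills $[x,y]$ in $\bar N$. So your argument also covers groups such as $\textnormal{loll}(2,12,2,14)$ with $p=3$, whose parameters fail the hypotheses of Theorems~\ref{notdivideb}, \ref{Sylow1} and \ref{2geradosfinal}.
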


We note that Theorem \ref{sylowintro} subsumes Theorem \ref{|c-d|oddintro}. Nevertheless, we state Theorem \ref{|c-d|oddintro} separately because its hypotheses are formulated directly in terms of the elementary arithmetic of $c$ and $d$ and are easier to check; also, it fits naturally alongside the neighboring criteria,
such as Theorem \ref{c-d=mdcintro} and Theorem \ref{notdividebintro}.

Finally, Section \ref{2generated} describes the $R_\infty$-nilpotency degree of 2-generated lollipop GBS groups.

\begin{introtheorem}[Theorem \ref{2geradosfinal}]
Let $G=\textnormal{loll}(a,b,c,d)$ be a 2-generated lollipop GBS group. Then, $G$ and $BS(ac,ad)$ have the same $R_{\infty}$-nilpotency degree $r$. Precisely, let $p$ denote the largest integer such that $2^p$ divides $2\frac{c}{\gcd(c,d)}+2$. We have:

\begin{itemize}
    \item If $d<0$ and $d \neq -c$, then $r=2$.
    \item If $d=c$ or $d=-c$, then $r=\infty$.
    \item If $d-c=\gcd(c,d)$, then $r=\infty$.
    \item If $d-c=2\gcd(c,d)$ and $ac,ad$ are not both even, then $r=2$.
    \item If $d-c=2\gcd(c,d)$ and $ac,ad$ are both even, then $2\leq r\leq p+2$.
    \item If $d-c\geq 3\gcd(c,d)$, then $r=2$.
\end{itemize}
\end{introtheorem}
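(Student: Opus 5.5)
The plan is to reduce the statement to the Baumslag--Solitar case and then read off the six cases from Theorem \ref{DaciDekimpe} and Theorem \ref{refinement}.

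\textbf{Step 1: identify the 2-generated lollipops.} I would use the classification of ranks of GBS groups due to Levitt. For a lollipop graph, I expect it to say that $\textnormal{loll}(a,b,c,d)$ is 2-generated exactly when the segment collapses, that is, $b=\pm1$ after the normalisations $a>1$, $0<c\leq|d|$. I am not certain there are no further 2-generated cases, so this should be checked against Levitt's criterion in terms of the labels of the segment and the loop. Suppose $b=\pm1$. Then $y=x^{\pm a}$ and the relation $ty^ct^{-1}=y^d$ becomes $tx^{\pm ac}t^{-1}=x^{\pm ad}$. Hence
\[
G\simeq\langle x,t \mid tx^{ac}t^{-1}=x^{ad}\rangle = BS(ac,ad).
\]
The equality of the $R_\infty$-nilpotency degrees of $G$ and $BS(ac,ad)$ then follows immediately, since isomorphic groups have isomorphic nilpotent quotients $G/\gamma_{k}(G)$.

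\textbf{Step 1$'$: possible extra 2-generated cases.} If Levitt's criterion produces other 2-generated lollipops, I would not look for a group isomorphism. Instead I would build, for every $k$, an isomorphism
\[
G/\gamma_k(G)\simeq BS(ac,ad)/\gamma_k(BS(ac,ad)).
\]
First, 2-generation forces $\gcd(a,b)=1$, which is visible on the abelianization $\mathbb{Z}^3/\langle ax-by,(c-d)y\rangle$. I would then take integers with $ua+vb=1$ and map $x,t$ appropriately. Using the presentation technique of Section \ref{normalformsection} (working modulo $\gamma_k$ of the free group), I would check that in the nilpotent quotient $y$ is a power of the image of $x$ times elements of $\gamma_2$. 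This step is the main obstacle. The recursion from $\gamma_j/\gamma_{j+1}$ to $\gamma_{j+1}$ must be controlled uniformly in $k$, and I would try induction on $k$ together with the exact-sequence diagrams of Section \ref{firstcases}.

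\textbf{Step 2: translate the parameters.} Put $m=ac$ and $n=ad$. Then $0<m\leq|n|$, $\gcd(m,n)=a\gcd(c,d)$, and
\[
n-m=a(d-c).
\]
Consequently each of the conditions $n-m=\gcd(m,n)$, $n-m=2\gcd(m,n)$ and $n-m\geq3\gcd(m,n)$ is equivalent to the corresponding condition on $d-c$ relative to $\gcd(c,d)$. The sign conditions $n<0$, $n=\pm m$ match $d<0$, $d=\pm c$. Also
\[
\frac{m}{\gcd(m,n)}=\frac{c}{\gcd(c,d)},
\]
so the integer $p$ is the same in both statements.

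\textbf{Step 3: read off the cases.} Theorem \ref{DaciDekimpe} gives every item except the split of the case $d-c=2\gcd(c,d)$. That case is refined by Theorem \ref{refinement}: it gives $r=2$ when $ac,ad$ are not both even, using $\gcd(m,n)=a\gcd(c,d)\geq a>1$. When $ac,ad$ are both even we are left with the bound $2\leq r\leq p+2$.
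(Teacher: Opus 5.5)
\textbf{Where the proposal falls short.} Your Steps 2 and 3 are correct, and they match how the paper finishes. Once one has $G/\gamma_n(G)\simeq BS(ac,ad)/\gamma_n(BS(ac,ad))$ for all $n\geq 2$, the identities $\gcd(ac,ad)=a\gcd(c,d)$, $ad-ac=a(d-c)$ and $ac/\gcd(ac,ad)=c/\gcd(c,d)$, together with Theorems~\ref{DaciDekimpe} and~\ref{refinement}, give the six items.

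The gap is that you never establish this isomorphism, and it is the whole content of the statement (Theorem~\ref{isomorfismodosquocientes}).
\begin{itemize}
\item Step~1 only covers $b=\pm1$. Your guess that these are all the 2-generated lollipops is false. Levitt's criterion is $\gcd(b,c,d)=1$ together with: every prime dividing $b$ divides $c$ or $d$. For example, $\textnormal{loll}(6,6,10,21)$ belongs to the family $\textnormal{loll}(a,pq,pr,qs)$ of Section~\ref{2generated}. It is 2-generated but not isomorphic to any Baumslag--Solitar group, so in general no group isomorphism exists.
\item Step~1$'$, which you yourself flag as the main obstacle, is only a plan, and its first step is wrong. 2-generation does not force $\gcd(a,b)=1$: the same example has $\gcd(a,b)=6$, and the abelianization only detects $\gcd(a,b,c-d)=1$. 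So your integers $u,v$ with $ua+vb=1$ need not exist.
\item Knowing that $y$ is a power of $x$ times an element of $\gamma_2$ would not suffice. You need $y$ to equal a power of $x$ exactly modulo $\gamma_n$.
\end{itemize}

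\textbf{The missing idea.} The relevant coprimality is $\gcd(b,d-c)=1$. A prime dividing both $b$ and $d-c$ divides $c$ or $d$ by the criterion, hence both, contradicting $\gcd(b,c,d)=1$.

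Next, $ty^{kc}t^{-1}=y^{kd}$ holds in $G$, so $[t,y^{kc}]=y^{k(d-c)}$ for every $k\in\mathbb{Z}$. Induction then gives
\[
y^{(d-c)^k}=[t,(y^{(d-c)^{k-1}})^{c}]\in\gamma_{k+1}(G).
\]

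Writing $\alpha b+\theta(d-c)^{n-1}=1$, you get
\[
y=y^{\alpha b}y^{\theta(d-c)^{n-1}}\equiv x^{\alpha a}\pmod{\gamma_n(G)}.
\]

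This yields two homomorphisms:
\begin{itemize}
\item $x\mapsto x$, $t\mapsto t$ defines $BS(ac,ad)\to G/\gamma_n(G)$. The relation already holds in $G$, since $tx^{ac}t^{-1}=ty^{bc}t^{-1}=y^{bd}=x^{ad}$.
\item $x\mapsto x$, $y\mapsto x^{\alpha a}$, $t\mapsto t$ defines $G\to BS(ac,ad)/\gamma_n(BS(ac,ad))$. Here $tx^{\alpha ac}t^{-1}=x^{\alpha ad}$ holds, and $x^{a}x^{-\alpha ab}=x^{a\theta(d-c)^{n-1}}$ lies in $\gamma_n(BS(ac,ad))$ by the same argument (cf.\ Lemmas~\ref{potenciascomutador} and~\ref{auxiliar}).
\end{itemize}
The congruence $y\equiv x^{\alpha a}$ shows these two maps are mutually inverse on the nilpotent quotients. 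No uniform induction over exact-sequence diagrams is needed.
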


Note that, throughout the paper, many different approaches were used. This is another reason why we believe this research can be considered a first step toward computing the $R_\infty$-nilpotency degree for broader classes of non-elementary GBS groups, beyond those considered in \cite{DaciDekimpe}.

In this work, the center of a group $G$ will be denoted by $Z(G)$, and the torsion subgroup of a nilpotent group $G$ will be denoted by $\tau(G)=\{g \in G\ |\ g^n=1\ \text{for some}\ n \geq 1\}$. If the context is clear, we may write only $\gamma_k$ instead of $\gamma_k(G)$.

\subsection*{Acknowledgements} We thank Professor Agatha Sacramento Rodrigues (Statistics Department -- UFES -- Brazil) for the assistance with the R software \cite{R} (see Remark \ref{howmuch}). We thank Professor Carolina de Miranda e Pereiro (Mathematics Department -- UFES -- Brazil) for the assistance with the TikZ package. We thank the referee for the valuable suggestions about the references and the text format. This study was financed, in part, by the São Paulo Research Foundation (FAPESP), Brazil. Process Number 2016/24707-4.

\section{Preliminaries}\label{preliminaries}

We now recall properties of twisted conjugacy and of nilpotent quotients of a group $G$. We begin with some useful tools for computing Reidemeister numbers. Although they are well-known, we provide proofs for completeness. For the next lemma, we use abelian notation.

\begin{lemma}\label{reidemeisterabelian}
Let $\varphi$ be an endomorphism of an abelian group $G$. Then
\begin{itemize}
    \item[1)] $R(\varphi)$ equals the cardinality of the quotient group $G/\im(\varphi-\id)$.
    \item[2)] In particular, if $G$ is finitely generated abelian and $\varphi=- \id$, then $R(\varphi)$ is finite.
\end{itemize}
\end{lemma}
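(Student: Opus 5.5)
In additive notation, twisted conjugacy reads $x \sim_\varphi y \iff y = z + x - \varphi(z)$ for some $z\in G$. This is the same as $y - x = z - \varphi(z) = -(\varphi-\id)(z)$. Since $\varphi$ is an endomorphism of an abelian group, $\varphi-\id$ is also an endomorphism, so $H:=\im(\varphi-\id)$ is a subgroup of $G$ and is closed under negation. Hence $x\sim_\varphi y$ holds exactly when $y-x\in H$, i.e. exactly when $x$ and $y$ lie in the same coset of $H$. The twisted conjugacy classes are therefore precisely the cosets $x+H$, and the map sending the class of $x$ to $x+H$ is a bijection onto $G/H$. This gives $R(\varphi)=|G/\im(\varphi-\id)|$, proving 1).

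For 2), take $\varphi=-\id$. Then $\varphi-\id=-2\,\id$, so $\im(\varphi-\id)=2G$, and by 1) we get $R(\varphi)=|G/2G|$. Write $G\simeq \mathbb{Z}^r\oplus T$ with $T$ finite, using the structure theorem for finitely generated abelian groups. Then
\[
G/2G\simeq (\mathbb{Z}/2)^r\oplus T/2T,
\]
which is a finite group of order at most $2^r|T|$. Thus $R(-\id)$ is finite.

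No step here is a genuine obstacle. The only point needing care is in 1): a twisted class is $\{z+x-\varphi(z)\}$, and one must check that this set is a full coset of $H$. This holds because $H$ is a subgroup, hence closed under negation, so $\{-(\varphi-\id)(z) : z\in G\} = H$.
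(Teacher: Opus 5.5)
Your proof is correct and follows the paper's argument. In part 1 you identify the twisted classes with the cosets of $\im(\varphi-\id)$, and in part 2 you reduce to the finiteness of $G/2G$; the only cosmetic difference is that you invoke the structure theorem explicitly, where the paper simply notes that $G/2G$ is a finitely generated torsion abelian group.
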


\begin{proof}
In fact, given $g,h \in G$, we have
\begin{eqnarray*}
g \sim_\varphi h 	&\Longleftrightarrow & \exists z \in G: z + g - \varphi(z)= h\\
					&\Longleftrightarrow & \exists z \in G:g - h=(\varphi-\id)(z)\\
					&\Longleftrightarrow & g - h \in \im(\varphi-\id).		
\end{eqnarray*} So, $R(\varphi)$ is clearly the cardinality of $G/\im(\varphi-\id)$ and item 1) is proved. Suppose now that $G$ is finitely generated and $\varphi=-\id$. By item 1), $R(\varphi)$ equals the cardinality of $G/2G$, which is clearly a finitely generated torsion abelian group and hence finite. This shows item 2) and completes the proof.
\end{proof}

\begin{cor}\label{atleast2}
The $R_\infty$-nilpotency degree of any finitely generated group is at least 2.
\end{cor}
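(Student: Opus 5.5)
The claim is that for a finitely generated group $G$, the quotient $G/\gamma_2(G)$ never has property $R_\infty$. Once this is shown, the least $c$ with $G/\gamma_{c+1}(G)$ having $R_\infty$ cannot be $1$. This gives degree at least $2$, or infinite, which also counts as at least $2$. The whole task is therefore to exhibit one automorphism of the abelianization $G^{ab}=G/\gamma_2(G)$ with finite Reidemeister number.

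Since $G$ is finitely generated, $G^{ab}$ is a finitely generated abelian group. Written additively, the map $\varphi=-\id$, $g\mapsto -g$, is an endomorphism because the group is abelian. It is bijective because it is its own inverse, so it is an automorphism of $G^{ab}$. Item 2) of Lemma \ref{reidemeisterabelian} then gives directly that $R(-\id)$ is finite. Hence $G^{ab}$ lacks $R_\infty$, and the corollary follows.

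One degenerate point needs a remark. If $G^{ab}$ is trivial, or more generally finite, then every automorphism has finitely many twisted classes, since the classes partition a finite set. So the argument is uniform. The only thing to check is that $-\id$ is an automorphism rather than merely an endomorphism, which is immediate, so no real obstacle arises.
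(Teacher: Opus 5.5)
Your proposal is correct and follows essentially the same route as the paper: both note that $G/\gamma_2(G)$ is finitely generated abelian, so $-\id$ is an automorphism of it, and item 2) of Lemma \ref{reidemeisterabelian} then gives $R(-\id)<\infty$. Your extra remark about a finite abelianization is harmless but not needed, since item 2) already covers that case.
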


\begin{proof}
If $G$ is finitely generated, then $G/\gamma_2(G)$ is a finitely generated abelian group. Since $\varphi=-\id$ is clearly an automorphism of any abelian group, it follows immediatly from item 2) of Lemma \ref{reidemeisterabelian} that $G/\gamma_2(G)$ does not have property $R_\infty$. Hence, the $R_\infty$-nilpotency degree of $G$ is at least 2.
\end{proof}

\begin{lemma}\label{cavelemma}
Consider the following commutative diagram

\[
\xymatrix{
  1 \ar[r] & A \ar[d]_-{\varphi'} \ar[r]^-{i} & B \ar[d]_-{\varphi} \ar[r]^-{p} & C \ar[d]^-{\overline{\varphi}} \ar[r] & 1 \\
  1 \ar[r] & A \ar[r]_-{i} & B \ar[r]_-{p} & C \ar[r] & 1
}
\]
where the rows are short exact sequences of groups and the vertical arrows are group endomorphisms. Then
\begin{itemize}
\item[1)] If $R(\overline{\varphi})=\infty$, then $R(\varphi)=\infty$;
\item[2)] If (the image of) $A$ is central in $B$, then $R(\varphi)\leq R(\varphi')R(\overline{\varphi})$. In particular, if both $R(\varphi')$ and $R(\overline{\varphi})$ are finite, then $R(\varphi)$ is finite.
\end{itemize}
\end{lemma}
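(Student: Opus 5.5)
For item 1), I would use the map induced by $p$ on Reidemeister classes. The first step is to check that $p$ sends $\varphi$-twisted classes of $B$ into $\overline{\varphi}$-twisted classes of $C$. If $y = zx\varphi(z)^{-1}$, then commutativity of the right square gives $p(y)=p(z)p(x)\overline{\varphi}(p(z))^{-1}$. So $p$ induces a well-defined map $\hat p$ from the set of $\varphi$-classes to the set of $\overline{\varphi}$-classes. Since $p$ is surjective, $\hat p$ is surjective. Hence $R(\varphi)\geq R(\overline{\varphi})$, and $R(\overline{\varphi})=\infty$ forces $R(\varphi)=\infty$. This part uses neither the left square nor exactness at $A$.

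For item 2), I would identify $A$ with $i(A)\subseteq Z(B)$ and show that every fiber of $\hat p$ meets at most $R(\varphi')$ classes. Fix a $\overline{\varphi}$-class in $C$ and choose a representative $p(x)$ with $x\in B$. Any $y\in B$ whose image lies in that class satisfies $p(y)=p(z)p(x)\overline{\varphi}(p(z))^{-1}=p(zx\varphi(z)^{-1})$ for some $z\in B$. So $y$ is $\varphi$-conjugate to $y'=zx\varphi(z)^{-1}$ with $p(y')=p(y)$. Since $y'$ is itself $\varphi$-conjugate to $x$, we get $y\sim_\varphi y'\sim_\varphi x$ except for a correction: $y=ax'$ with $a\in A$ and $x'\sim_\varphi x$. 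Replacing $x'$ by $x$ is the delicate point. If $x'=wx\varphi(w)^{-1}$, then centrality gives $ax'=w(ax)\varphi(w)^{-1}$, so $ax'\sim_\varphi ax$. Therefore every $\varphi$-class in the fiber over the class of $p(x)$ contains an element of the form $ax$ with $a\in A$.

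It then remains to show that $ax\sim_\varphi a'x$ whenever $a\sim_{\varphi'} a'$. Suppose $a'=ua\varphi'(u)^{-1}$ with $u\in A$. Since $A$ is central and $\varphi|_A=\varphi'$ by the left square, we get $u(ax)\varphi(u)^{-1}=ua\varphi'(u)^{-1}x=a'x$, because $\varphi(u)^{-1}$ is central and can be moved past $x$. Hence the fiber contains at most $R(\varphi')$ classes. Summing over the $R(\overline{\varphi})$ fibers gives $R(\varphi)\leq R(\varphi')R(\overline{\varphi})$, and the finiteness statement follows immediately.

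The main obstacle is the bookkeeping of centrality. It is needed twice: first to move the $A$-correction through a twisted conjugation, and second to commute $\varphi(u)^{-1}$ past $x$. One must also check that $\varphi$ maps $A$ into $A$ with restriction $\varphi'$, which the left square of the diagram guarantees.
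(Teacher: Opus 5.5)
Your item 1) is the paper's argument: $\hat p$ is well defined by the right square and surjective because $p$ is.

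For item 2) the paper gives no proof and only cites Senden's Lemma~2.1, so your fiber count is a genuinely different, self-contained route, and it is correct. For a fixed representative $x$ of a fiber of $\hat p$, you show two things:
\begin{itemize}
\item every $\varphi$-class over $[p(x)]_{\overline{\varphi}}$ contains an element $ax$ with $a\in A$ (this uses centrality);
\item $[a]_{\varphi'}\mapsto[ax]_\varphi$ is well defined (this uses centrality and the left square).
\end{itemize}
So this map is a surjection from the $\varphi'$-classes of $A$ onto that fiber, which gives $R(\varphi)\le R(\varphi')R(\overline{\varphi})$. The citation is shorter. Your argument is elementary and shows exactly where centrality enters. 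If $A$ were only normal and $\varphi$-invariant, the same computation would bound the fiber over $[p(x)]_{\overline{\varphi}}$ by $R(\iota_x\circ\varphi')$, where $\iota_x$ is conjugation by $x$. Centrality is precisely what makes $\iota_x\circ\varphi'=\varphi'$ for every $x$.

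One sentence needs repair. You write that $y$ is $\varphi$-conjugate to $y'=zx\varphi(z)^{-1}$ and that $y\sim_\varphi y'\sim_\varphi x$. Both claims are false in general; otherwise every fiber of $\hat p$ would be a single class. What actually holds, and what your next clause uses, is this: $p(y)=p(y')$, so $yy'^{-1}\in\ker p=i(A)$ by exactness at $B$. Hence $y=ay'$ with $a\in A$ and $y'\sim_\varphi x$. State it that way, noting that this is where exactness in the middle of the row is used, and the proof is complete.
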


\begin{proof}
Denote by $[b]_\varphi$ and $[c]_{\overline{\varphi}}$ the twisted conjugacy classes of any $b \in B$ and $c \in C$, respectively. Let $\mathcal{R}(\varphi)$ and $\mathcal{R}(\overline{\varphi})$ denote the sets of twisted conjugacy classes of $\varphi$ and $\overline{\varphi}$, respectively. There is a well-defined map $\hat{p}\colon \mathcal{R}(\varphi) \to \mathcal{R}(\overline{\varphi})$ with $\hat{p}([b]_\varphi)=[p(b)]_{\overline{\varphi}}$. Indeed, for $b,b' \in B$, we have \begin{eqnarray*}
[b]_\varphi=[b']_\varphi 	&\Rightarrow & \exists z \in B:zb\varphi(z)^{-1}=b'\\
&\Rightarrow & p(z)p(b)p(\varphi(z))^{-1}=p(b')\\
&\Rightarrow & p(z)p(b)\overline{\varphi}(p(z))^{-1}=p(b')\\
&\Rightarrow & [p(b)]_{\overline{\varphi}}=[p(b')]_{\overline{\varphi}}.
\end{eqnarray*} Since $p$ is surjective, it is clear that $\hat{p}$ is also surjective, which implies $R(\varphi) \geq R(\overline{\varphi})$. Therefore, $R(\overline{\varphi})=\infty$ implies $R(\varphi)=\infty$ and item 1) is proved. For the proof of item 2), see \cite[Lemma 2.1]{Senden25}.
\end{proof}

\begin{cor}\label{characteristic}
If $H$ is characteristic in $G$ and $G/H$ has $R_\infty$, then $G$ has $R_\infty$. In particular, if a group $G$ is such that $G/\gamma_n(G)$ has $R_\infty$ for some $n \geq 2$, then $G/\gamma_m(G)$ has $R_\infty$ for any $m\geq n$.
\end{cor}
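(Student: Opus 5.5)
The plan is to deduce both assertions from item 1) of Lemma \ref{cavelemma}, applied to the short exact sequence $1 \to H \to G \to G/H \to 1$.

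\emph{First assertion.} Let $\varphi$ be an arbitrary automorphism of $G$. Since $H$ is characteristic, $\varphi(H)=H$, so $\varphi$ restricts to an endomorphism $\varphi'=\varphi|_H$ of $H$. It also induces a map $\overline{\varphi}\colon G/H\to G/H$ given by $\overline{\varphi}(gH)=\varphi(g)H$. This map is well defined because $\varphi(H)\subseteq H$. It is an automorphism, since $\varphi^{-1}$ also preserves $H$ (as $\varphi^{-1}(H)=H$) and therefore induces the inverse of $\overline{\varphi}$. With these maps, the diagram of Lemma \ref{cavelemma}, with $A=H$, $B=G$ and $C=G/H$, commutes by construction. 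Because $G/H$ has $R_\infty$ and $\overline{\varphi}$ is an automorphism, we have $R(\overline{\varphi})=\infty$. Item 1) of Lemma \ref{cavelemma} then gives $R(\varphi)=\infty$. Since $\varphi$ was arbitrary, $G$ has $R_\infty$.

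\emph{Second assertion.} Take $m\geq n$ and set $G'=G/\gamma_m(G)$. I will show that $\gamma_n(G')=\gamma_n(G)/\gamma_m(G)$. For any surjection $\pi\colon G\to Q$ one has $\pi(\gamma_k(G))=\gamma_k(Q)$; this follows by induction on $k$, because the image of a commutator subgroup $[X,G]$ under a surjection is the commutator subgroup of the images. Applying this to the projection $G\to G'$ gives the claimed equality. The terms of the lower central series are verbal subgroups, hence characteristic, so $\gamma_n(G')$ is characteristic in $G'$. By the third isomorphism theorem,
\[
G'/\gamma_n(G')\simeq G/\gamma_n(G),
\]
and the right-hand side has $R_\infty$ by hypothesis. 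Applying the first assertion to the group $G'$ and its characteristic subgroup $\gamma_n(G')$ shows that $G'=G/\gamma_m(G)$ has $R_\infty$.

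Neither step is a real obstacle. The two points needing care are checking that $\overline{\varphi}$ is an automorphism rather than merely an endomorphism, which matters because $R_\infty$ is a condition on automorphisms only, and verifying that the lower central series is compatible with quotients.
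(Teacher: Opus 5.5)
Your proposal is correct and follows the paper's approach: the first assertion comes from item 1) of Lemma \ref{cavelemma} applied to the induced maps, and the second from applying the first assertion to a characteristic lower-central-series quotient. The only difference is that the paper passes from $G/\gamma_n(G)$ to $G/\gamma_{n+1}(G)$ and then inducts, whereas you go directly to $G/\gamma_m(G)$ using $\gamma_n(G/\gamma_m(G))=\gamma_n(G)/\gamma_m(G)$, which you justify along with the fact that $\overline{\varphi}$ is an automorphism (both points the paper leaves implicit).
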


\begin{proof}
If $H$ is characteristic in $G$ and $\varphi$ is an automorphism of $G$, there is a well-defined commutative diagram as the one in Lemma \ref{cavelemma}, with $A=H$, $B=G$, $C=G/H$ and $\varphi'$, $\overline{\varphi}$ being the naturally induced automorphisms on $A$ and $C$, respectively. Since $G/H$ has $R_\infty$, we have $R(\overline{\varphi})=\infty$. Hence, $R(\varphi)=\infty$ by item 1) of Lemma \ref{cavelemma}, which shows the first claim. For the last claim, recall that  $\gamma_{n}(G)/\gamma_{n+1}(G)$ is characteristic in $G/\gamma_{n+1}(G)$, with quotient
\[
\dfrac{G/\gamma_{n+1}(G)}{\gamma_{n}(G)/\gamma_{n+1}(G)}\simeq \dfrac{G}{\gamma_{n}(G)}.
\]By what we have just shown, if $G/\gamma_{n}(G)$ has $R_\infty$, then $G/\gamma_{n+1}(G)$ has $R_\infty$ as well. By performing an easy induction, one can show that $G/\gamma_m(G)$ has $R_\infty$ for any $m\geq n$.
\end{proof}

The next two results are shown by easy adaptations of \cite[Theorems 5.3 and 5.4]{MagKarSol} to our definition of commutators, that is, $[g,h]=ghg^{-1}h^{-1}$.

\begin{prop}\label{commut3}
Let $k,m,n \geq 1$ be integers and let $x,y,z \in G$ be elements of a group $G$ such that $x \in \gamma_k(G)$, $y \in \gamma_m(G)$ and $z \in \gamma_n(G)$. Then
\begin{itemize}
\item[a)] $xy=yx\  \pmod{ \gamma_{k+m}(G)}$;
\item[b)] $[x,yz]=[x,y][x,z]\  \pmod{\gamma_{k+m+n}(G)}$;
\item[c)] $[xy,z]=[x,z][y,z]\pmod{\gamma_{k+m+n}(G)}$.
\end{itemize}
\end{prop}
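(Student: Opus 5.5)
The plan is to rely on the standard commutator identities for the convention $[g,h]=ghg^{-1}h^{-1}$, together with the basic containments $[\gamma_i(G),\gamma_j(G)]\subseteq\gamma_{i+j}(G)$. I will take these containments as known, since they are the standard consequence of the three subgroup lemma (see \cite{MagKarSol}).

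For item a), observe that $xy(yx)^{-1}=xyx^{-1}y^{-1}=[x,y]$. Since $x\in\gamma_k(G)$ and $y\in\gamma_m(G)$, this commutator lies in $[\gamma_k(G),\gamma_m(G)]\subseteq\gamma_{k+m}(G)$. Hence $xy\equiv yx \pmod{\gamma_{k+m}(G)}$.

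For item b), I will use the exact identity
\[
[x,yz]=x\,yz\,x^{-1}z^{-1}y^{-1}=[x,y]\,\bigl(y[x,z]y^{-1}\bigr),
\]
which is checked by direct expansion: $[x,y]\,y[x,z]y^{-1}=xyx^{-1}y^{-1}\,y\,xzx^{-1}z^{-1}\,y^{-1}=xyzx^{-1}z^{-1}y^{-1}$. Next I write the conjugate as $y[x,z]y^{-1}=\bigl[y,[x,z]\bigr]\,[x,z]$. The element $[x,z]$ lies in $\gamma_{k+n}(G)$, so $\bigl[y,[x,z]\bigr]$ lies in $\gamma_{m+k+n}(G)$. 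Therefore $[x,yz]\equiv[x,y][x,z]\pmod{\gamma_{k+m+n}(G)}$.

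For item c), the analogous identity is
\[
[xy,z]=xyzy^{-1}x^{-1}z^{-1}=\bigl(x[y,z]x^{-1}\bigr)[x,z],
\]
verified by $x\,yzy^{-1}z^{-1}\,x^{-1}\,xzx^{-1}z^{-1}=xyzy^{-1}x^{-1}z^{-1}$. Now $x[y,z]x^{-1}=\bigl[x,[y,z]\bigr][y,z]$, and $\bigl[x,[y,z]\bigr]\in\gamma_{k+m+n}(G)$. This gives $[xy,z]\equiv[y,z][x,z]$ modulo $\gamma_{k+m+n}(G)$. Finally, $[y,z]\in\gamma_{m+n}(G)$ and $[x,z]\in\gamma_{k+n}(G)$, so by item a) they commute modulo $\gamma_{k+m+2n}(G)\subseteq\gamma_{k+m+n}(G)$. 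Swapping them yields the stated order $[x,z][y,z]$.

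The only step I expect to need care is this reordering at the end of c), since it is easy to drop. Everything else reduces to the two exact identities above, which I will display and verify by cancellation. The containment $[\gamma_i,\gamma_j]\subseteq\gamma_{i+j}$ I will quote rather than reprove.
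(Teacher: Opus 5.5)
Your proof is correct. Both displayed identities are right under the convention $[g,h]=ghg^{-1}h^{-1}$, the error terms $[y,[x,z]]$ and $[x,[y,z]]$ lie in $\gamma_{k+m+n}(G)$ by $[\gamma_i(G),\gamma_j(G)]\subseteq\gamma_{i+j}(G)$, and you correctly handle the reordering in c) via a), since $\gamma_{k+m+2n}(G)\subseteq\gamma_{k+m+n}(G)$. The paper gives no proof of its own and only cites Magnus--Karrass--Solitar (Theorems 5.3 and 5.4), adapted to this convention; your argument is essentially that standard adaptation.
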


\begin{prop}\label{commutmks}
If $G$ is generated by elements $x_1,\ldots,x_r$, then, for any $k \geq 1$, the group $\gamma_k(G)/\gamma_{k+1}(G)$ is abelian and finitely generated by the cosets of the $k$-fold commutators \newline $[x_{i_1},\ldots,x_{i_k}]$, where $1 \leq i_j \leq r$, with the convention that a $1$-fold commutator is just $x_i$.
\end{prop}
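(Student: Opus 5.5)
The statement to prove is Proposition~\ref{commutmks}: for $G=\langle x_1,\ldots,x_r\rangle$, the quotient $\gamma_k(G)/\gamma_{k+1}(G)$ is abelian and generated by the cosets of the $k$-fold commutators $[x_{i_1},\ldots,x_{i_k}]$. I would argue by induction on $k$, using Proposition~\ref{commut3} throughout.

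\textbf{Abelianness.} Take $u,v\in\gamma_k(G)$. Then $[u,v]\in[\gamma_k,G]=\gamma_{k+1}$, so $uv=vu$ modulo $\gamma_{k+1}$. (This is also item a) of Proposition~\ref{commut3}.)

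\textbf{Generation, base case.} For $k=1$, the group $G/\gamma_2$ is generated by the images of the $x_i$, since the $x_i$ generate $G$.

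\textbf{Generation, inductive step.} Assume the claim for $k$. The group $\gamma_{k+1}$ is generated by the commutators $[u,g]$ with $u\in\gamma_k$ and $g\in G$. I proceed as follows.

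\begin{enumerate}
\item Since $\gamma_{k+1}/\gamma_{k+2}$ is abelian, it suffices to show that each such $[u,g]$ lies, modulo $\gamma_{k+2}$, in the subgroup generated by the classes of the $(k+1)$-fold basic commutators. Indeed, an arbitrary element of $\gamma_{k+1}$ is a product of the $[u,g]^{\pm1}$.

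\item \emph{Reduce $g$ to generators.} Write $g$ as a word in the $x_i^{\pm1}$. By item b) of Proposition~\ref{commut3} with exponents $k,1,1$, we have $[u,yz]\equiv[u,y][u,z]$ modulo $\gamma_{k+2}$. Iterating this, $[u,g]$ is congruent to a product of terms $[u,x_i^{\pm1}]$.

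\item \emph{Handle inverse letters.} Apply the same rule to $1=[u,x_ix_i^{-1}]$. This gives $[u,x_i^{-1}]\equiv[u,x_i]^{-1}$, so only terms $[u,x_i]$ remain.

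\item \emph{Reduce $u$.} By the induction hypothesis, $u\equiv\prod c_j^{\pm1}$ modulo $\gamma_{k+1}$, where the $c_j$ are $k$-fold basic commutators. So $u=\bigl(\prod c_j^{\pm1}\bigr)w$ with $w\in\gamma_{k+1}$.

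\item \emph{Expand in the first slot.} By item c) of Proposition~\ref{commut3}, $[u,x_i]\equiv\prod[c_j,x_i]^{\pm1}\,[w,x_i]$ modulo $\gamma_{k+2}$. The inverses are handled as in step 3, using $1=[c c^{-1},x_i]$.

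\item \emph{Discard the error term.} Here $[w,x_i]\in\gamma_{k+2}$, so this factor vanishes modulo $\gamma_{k+2}$.
\end{enumerate}

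Each remaining factor $[c_j,x_i]=[x_{i_1},\ldots,x_{i_k},x_i]$ is a $(k+1)$-fold basic commutator, which completes the induction. Finite generation is immediate, since there are only $r^{k}$ such commutators.

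\textbf{Main obstacle.} The only delicate point is index bookkeeping in items b) and c) of Proposition~\ref{commut3}. One must apply them with the exponent triple $(k,1,1)$, or its analogue with the $\gamma_{k+1}$-error term, so that every error term lands in $\gamma_{k+2}$. Apart from that, the argument is just multilinearity of commutators modulo the next term of the lower central series.
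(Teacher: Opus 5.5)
Your proof is correct. It is essentially the argument the paper relies on: the paper gives no proof of its own and only cites Theorems~5.3 and~5.4 of Magnus--Karrass--Solitar, adapted to the convention $[g,h]=ghg^{-1}h^{-1}$. Your induction on $k$ is exactly the standard derivation of the generation statement from the bilinearity congruences of Proposition~\ref{commut3}, and you correctly check that every error term lands in $\gamma_{k+2}$ (for instance, $\gamma_{2k+1}\subseteq\gamma_{k+2}$).
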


\begin{prop}\label{torfinita}
Let $G$ be finitely generated. If $\gamma_n/\gamma_{n+1}$ is finite for some $n \geq 1$, then $\gamma_k/\gamma_m$ is finite for all $n \leq k \leq m$.
\end{prop}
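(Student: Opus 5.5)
The plan is to induct upward in the lower central series, using the multilinearity of commutators modulo higher terms (Proposition \ref{commut3}) together with the generating set from Proposition \ref{commutmks}.

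\textbf{Step 1: each higher layer is finite.} Fix generators $x_1,\ldots,x_r$ of $G$. By Proposition \ref{commutmks}, $\gamma_{j+1}/\gamma_{j+2}$ is generated by the cosets of the commutators $[c,x_i]$, where $c$ runs over a generating set of $\gamma_j$ modulo $\gamma_{j+1}$. By Proposition \ref{commut3} c), for $c,c'\in\gamma_j$ we have $[cc',x_i]\equiv[c,x_i][c',x_i] \pmod{\gamma_{j+2}}$. Also, if $c\in\gamma_{j+1}$ then $[c,x_i]\in\gamma_{j+2}$. Hence for each $i$ the map $\gamma_j/\gamma_{j+1}\to\gamma_{j+1}/\gamma_{j+2}$ given by $c\gamma_{j+1}\mapsto [c,x_i]\gamma_{j+2}$ is a well-defined group homomorphism. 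This needs a small check that the map is compatible with inverses, which follows from multiplicativity, since $[c^{-1},x_i]$ is inverse to $[c,x_i]$ modulo $\gamma_{j+2}$. Taking the product of these $r$ homomorphisms gives a homomorphism
\[
(\gamma_j/\gamma_{j+1})^r\to\gamma_{j+1}/\gamma_{j+2}.
\]
Its image is an abelian subgroup containing all the generators, so the map is surjective. Therefore, if $\gamma_j/\gamma_{j+1}$ is finite, so is $\gamma_{j+1}/\gamma_{j+2}$, with order at most $|\gamma_j/\gamma_{j+1}|^r$. Starting from $j=n$ and inducting, every $\gamma_j/\gamma_{j+1}$ with $j\ge n$ is finite.

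\textbf{Step 2: assemble the quotient $\gamma_k/\gamma_m$.} For $n\le k\le m$, use the filtration
\[
\gamma_k/\gamma_m\supseteq\gamma_{k+1}/\gamma_m\supseteq\cdots\supseteq\gamma_m/\gamma_m=1.
\]
Its successive quotients are $\gamma_j/\gamma_{j+1}$ for $k\le j<m$, and each is finite by Step 1. Thus $|\gamma_k/\gamma_m|=\prod_{j=k}^{m-1}|\gamma_j/\gamma_{j+1}|$ is finite. The case $k=m$ is trivial.

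The only delicate point is the well-definedness and multiplicativity in Step 1. Proposition \ref{commut3} c) applies with indices $(j,j,1)$ and yields congruence modulo $\gamma_{2j+1}\subseteq\gamma_{j+2}$ for $j\ge1$, so the required congruence modulo $\gamma_{j+2}$ holds. Finite generation of $G$ is used only to make $r$ finite.
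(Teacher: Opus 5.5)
Your proof is correct and follows essentially the same route as the paper: you induct up the layers using the generators from Proposition \ref{commutmks} and the multilinearity from Proposition \ref{commut3}, then multiply the finite layer orders via Lagrange's Theorem. The only difference is packaging. The paper shows that each generator $[z,x]\gamma_{k+2}$ has finite order (since $[z,x]^l\gamma_{k+2}=[z^l,x]\gamma_{k+2}$) and uses that a finitely generated torsion abelian group is finite, whereas your epimorphism $(\gamma_j/\gamma_{j+1})^r\to\gamma_{j+1}/\gamma_{j+2}$ reaches the same conclusion and also gives the bound $|\gamma_{j+1}/\gamma_{j+2}|\le|\gamma_j/\gamma_{j+1}|^r$.
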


\begin{proof}
First, we claim that $\gamma_k/\gamma_{k+1}$ is finite for any $k \geq n$. By assumption, it holds for $k=n$. Suppose it holds for some $k\geq n$. It follows from Proposition \ref{commutmks} that $\gamma_{k+1}/\gamma_{k+2}$ is generated by all the elements $[z,x]\gamma_{k+2}$, where $z\gamma_{k+1}$ is a generator of $\gamma_k/\gamma_{k+1}$ and $x\gamma_2$ a generator of $G/\gamma_2$. Since $z\gamma_{k+1}$ has finite order in $\gamma_k/\gamma_{k+1}$, there exists $l \geq 1$ such that $z^l \in \gamma_{k+1}$. By Proposition \ref{commut3}, it follows that $[z,x]^l\gamma_{k+2}=[z^l,x]\gamma_{k+2}=\gamma_{k+2}$,
so all generators of $\gamma_{k+1}/\gamma_{k+2}$ are of finite order. Since $\gamma_{k+1}/\gamma_{k+2}$ is finitely generated and abelian (Proposition \ref{commutmks}), it is a finite group, hence the claim holds for $k+1$. Therefore, the induction step is complete and the claim is true for all $k \geq n$.

Finally, given $n \leq k \leq m$, by the previous claim and Lagrange's Theorem we obtain
\[
|\gamma_k/\gamma_m|=|\gamma_k/\gamma_{k+1}||\gamma_{k+1}/\gamma_{k+2}|\cdots|\gamma_{m-1}/\gamma_m|<\infty,
\]as desired.
\end{proof}

\section{Trees and some simple lollipop GBS groups} \label{firstcases}

We first study the case of non-elementary GBS groups associated with (simplicial) trees. Recall that a tree is a non-empty and connected graph $\Gamma$ with no closed circuits. That means the GBS group $G$ has no stable letters in its presentation. For these groups, we compute the $R_{\infty}$-nilpotency degree and prove that it is infinite. Some simple examples are torus knot groups of the form $G(a,b)=\left<x,y\ |\ x^a=y^b\right>$, where $a$ and $b$ are coprime positive integers. These groups are the fundamental groups of the complement of certain torus knots in $\mathbb{R}^3$. For instance, $G(2,3)$ is the well-known trefoil knot group.




\begin{lemma}\label{powers}
Let $G$ be a GBS group represented by a labeled tree $\Gamma$. Then for any vertices $v,w \in \Gamma$, there exist non-zero integers $\alpha,\beta$ such that $x_v^\alpha=x_w^\beta$.
\end{lemma}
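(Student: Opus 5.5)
The plan is to induct along the unique reduced path in the tree $\Gamma$ from $v$ to $w$. Since $\Gamma$ is a tree, it equals its own maximal subtree $T$. So the presentation of $G$ has no stable letters, and every non-oriented edge $\epsilon=(e,\overline{e})$ contributes the relation $x_{o(e)}^{\lambda_e}=x_{o(\overline{e})}^{\lambda_{\overline{e}}}$, with both labels non-zero. If $v=w$ we take $\alpha=\beta=1$. Otherwise let $v=v_0,v_1,\ldots,v_k=w$ be the vertices of the path (connectedness gives its existence). Let $e_i$ be the oriented edge from $v_{i-1}$ to $v_i$, with labels $\lambda_i=\lambda_{e_i}$ and $\mu_i=\lambda_{\overline{e_i}}$. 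Then $x_{v_{i-1}}^{\lambda_i}=x_{v_i}^{\mu_i}$ holds in $G$ for each $i$.

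I will prove by induction on $j$ that there are non-zero integers $\alpha_j,\beta_j$ with $x_{v_0}^{\alpha_j}=x_{v_j}^{\beta_j}$. The case $j=1$ is the edge relation itself, with $\alpha_1=\lambda_1$ and $\beta_1=\mu_1$. Suppose $x_{v_0}^{\alpha_j}=x_{v_j}^{\beta_j}$ holds. Raising both sides to the power $\lambda_{j+1}$ gives $x_{v_0}^{\alpha_j\lambda_{j+1}}=x_{v_j}^{\beta_j\lambda_{j+1}}$. Raising the relation $x_{v_j}^{\lambda_{j+1}}=x_{v_{j+1}}^{\mu_{j+1}}$ to the power $\beta_j$ gives $x_{v_j}^{\beta_j\lambda_{j+1}}=x_{v_{j+1}}^{\beta_j\mu_{j+1}}$. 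Combining the two, we set $\alpha_{j+1}=\alpha_j\lambda_{j+1}$ and $\beta_{j+1}=\beta_j\mu_{j+1}$; both are non-zero, being products of non-zero integers. Taking $j=k$ yields the lemma. In closed form, $\alpha=\prod_i\lambda_i$ and $\beta=\prod_i\mu_i$.

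There is no real obstacle here. The only point needing care is that we need only equalities between powers, not any statement about non-triviality. Non-zero exponents come straight from the labels being non-zero. If non-triviality of $x_v^\alpha$ were wanted later, it would follow from GBS groups being torsion-free together with the injectivity of vertex groups into the fundamental group of a graph of groups.
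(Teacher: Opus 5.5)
Your proof is correct and follows essentially the same route as the paper: both induct along the geodesic from $v$ to $w$ in the tree and compose the edge relations by raising them to suitable powers. The paper's step $\alpha=pk$, $\beta=ql$ is exactly your $\alpha_{j+1}=\alpha_j\lambda_{j+1}$, $\beta_{j+1}=\beta_j\mu_{j+1}$, and your closed form $\alpha=\prod_i\lambda_i$, $\beta=\prod_i\mu_i$ is a harmless extra.
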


\begin{proof}
We proceed by induction on the geodesic distance $n\geq 0$ between $v$ and $w$ in $\Gamma$. The case $n=0$ is obvious. If $n=1$, then $v$ and $w$ are adjacent and the lemma is true by the presentation of $G$. Suppose the lemma is true for some $n \geq 1$ and let us show it for $n+1$. Let $\gamma$ be a geodesic in $\Gamma$ of length $n+1$ from $v$ to $w$. Let $u \in \gamma$ be the vertex adjacent to $w$. Then the distance between $v$ and $u$ is $n$. By induction hypothesis, we have $x_v^k=x_u^l$, for non-zero integers $k,l$. By the presentation of $G$, we have $x_u^p=x_w^q$, for non-zero integers $p,q$. 
Then, for the non-zero integers $\alpha=pk$ and $\beta=ql$ we have
\[
x_v^\alpha=x_v^{pk}=x_u^{pl}=x_w^{ql}=x_w^{\beta},
\] as desired. This completes the proof of the lemma.
\end{proof}

\begin{lemma}\label{centralsubgroup}
Let $G$ be a GBS group represented by a labeled tree $\Gamma$. Then there exists an infinite cyclic subgroup $C \leq Z(G)$ such that, for every vertex $v$ of $\Gamma$, some positive power of the vertex generator $x_v$ belongs to $C$.
\end{lemma}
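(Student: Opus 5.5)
Fix a vertex $v_0$ of $\Gamma$. By Lemma \ref{powers}, for every vertex $w$ there are non-zero integers $\alpha_w,\beta_w$ with $x_{v_0}^{\alpha_w}=x_w^{\beta_w}$. Since $\Gamma$ is finite, set $N=\prod_{w}|\alpha_w|$, the product over all vertices, and take $C=\langle x_{v_0}^N\rangle$. The plan is to show two things: that $x_{v_0}^N$ is central in $G$, and that every vertex generator has a positive power lying in $C$. Then $C\leq Z(G)$. It is infinite cyclic because GBS groups are torsion-free and $x_{v_0}^N\neq 1$.

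\textbf{Step 1: powers.} For each $w$, write $N=\alpha_w k_w$ with $k_w\in\mathbb{Z}-\{0\}$. Then
\[
x_{v_0}^N=(x_{v_0}^{\alpha_w})^{k_w}=x_w^{\beta_w k_w}.
\]
Hence $x_w^{\beta_w k_w}\in C$, with $\beta_wk_w\neq 0$. If this exponent is negative, invert: $x_w^{-\beta_wk_w}\in C$ as well, because $C$ is a subgroup. So some positive power of $x_w$ lies in $C$.

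\textbf{Step 2: centrality.} Since $\Gamma$ is a tree, there are no stable letters, and $G$ is generated by the vertex letters $x_w$ alone. By Step 1, $x_{v_0}^N$ equals a power of $x_w$ for every $w$, so it commutes with each $x_w$. An element commuting with every generator is central, so $x_{v_0}^N\in Z(G)$ and therefore $C\leq Z(G)$.

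The only subtle point is the infinitude of $C$. It uses torsion-freeness of GBS groups, cited in the introduction. It also uses that $x_{v_0}$ itself is nontrivial, since it generates the infinite cyclic vertex group $G_{v_0}$, which embeds in $G$ by standard Bass–Serre theory. The case of a single vertex is immediate: there $G=\mathbb{Z}$ and we may take $C=G$. I expect no real obstacle. The crux is choosing the common multiple $N$ so that a single element is simultaneously a power of every vertex generator.
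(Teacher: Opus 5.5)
Your proposal is correct and follows essentially the same route as the paper: fix $v_0$, take a common multiple $N$ of the $|\alpha_w|$ from Lemma~\ref{powers}, note that $x_{v_0}^N$ is a power of every vertex generator, and conclude it is central because the vertex generators generate $G$ when $\Gamma$ is a tree. Your explicit inversion step, which ensures that a \emph{positive} power of each $x_w$ lies in $C$, covers a point the paper's proof leaves implicit.
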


\begin{proof}
Choose a vertex $v_0$ of $\Gamma$, and write $x_0 = x_{v_0}$. By Lemma \ref{powers}, for every vertex $v$ of $\Gamma$ there exist non-zero integers $\alpha_v$, $\beta_v$ such that $x_0^{\alpha_v}=x_v^{\beta_v}$.
Since $\Gamma$ is finite, choose a positive integer $L$ divisible by $|\alpha_v|$ for every vertex $v$. Then
\[
z \coloneq x_0^L = (x_0^{\alpha_v})^{L/\alpha_v} = (x_v^{\beta_v})^{L/\alpha_v}
= x_v^{\beta_v L /\alpha_v}
\]
for every vertex $v$. Set $C = \langle z\rangle$. Since the vertex group $\langle x_0 \rangle$ embeds in $G$ and is infinite cyclic, $C$ is infinite cyclic. Moreover, $z$ commutes with every vertex generator, because $z$ lies in every vertex group. Since $\Gamma$ is a tree, there are no stable letters in the presentation of $G$, thus $G$ is generated by its vertex generators. Hence $z \in Z(G)$, and therefore $C \leq Z(G)$. Moreover, 
$x_v^{\beta_v L/\alpha_v} \in C$ for any vertex generator $x_v$, by the above.
\end{proof}


\begin{theorem}\label{rank0}
Let $G$ be a GBS group represented by a labeled tree $\Gamma$. Then the $R_{\infty}$-nilpotency degree of $G$ is infinite.
\end{theorem}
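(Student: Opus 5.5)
We must show that for every $k\geq 2$ the nilpotent quotient $N=G/\gamma_{k}(G)$ admits an automorphism with finitely many twisted conjugacy classes. The natural candidate is the automorphism induced by inversion of all vertex generators. Since every relation of $G$ has the form $x_v^{\lambda}=x_w^{\mu}$ (there are no stable letters), the assignment $x_v\mapsto x_v^{-1}$ respects the relations. It therefore defines an automorphism $\varphi$ of $G$, which is an involution. Being an automorphism, $\varphi$ preserves each $\gamma_k(G)$ and hence induces an automorphism $\varphi_k$ of $G/\gamma_k(G)$. The goal is to prove $R(\varphi_k)<\infty$ for all $k$.

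The key structural input is Lemma \ref{centralsubgroup}: there is a central infinite cyclic $C=\langle z\rangle$ with $x_v^{n_v}\in C$ for suitable $n_v>0$. I will pass to the quotient $\overline G=G/C$, which is generated by the images of the $x_v$, each of finite order. Then $\overline G/\gamma_2(\overline G)$ is a finitely generated abelian group generated by torsion elements, hence finite. By Proposition \ref{torfinita} applied with $n=1$, every quotient $\overline G/\gamma_k(\overline G)$ is finite.

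Now consider the central extension
\[
1\to \overline{C}_k \to G/\gamma_k(G)\to \overline G/\gamma_k(\overline G)\to 1,
\]
where $\overline C_k$ is the image of $C$ in $G/\gamma_k(G)$. The kernel is central because $C\le Z(G)$, and $\gamma_k(\overline G)$ is the image of $\gamma_k(G)$, so the sequence is exact. Since $z=x_0^L$, we have $\varphi(z)=z^{-1}$, so $\varphi$ preserves $C$ and acts as $-\id$ on it. The automorphism $\varphi_k$ therefore restricts to $-\id$ on the cyclic group $\overline C_k$ and induces an automorphism on the finite quotient.

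By Lemma \ref{reidemeisterabelian}, part 2, $R(-\id)$ on $\overline C_k$ is finite. On the finite quotient the Reidemeister number is trivially finite. Lemma \ref{cavelemma}, part 2, then gives $R(\varphi_k)<\infty$. Hence no $G/\gamma_k(G)$ has $R_\infty$, and the degree is infinite.

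The only delicate points are the well-definedness of $\varphi$ on $G$ and the claim that $C$ maps into the center of the quotient and is $\varphi$-invariant. Both are immediate from the form of the relations and from $z$ being a power of a vertex generator, so I expect the argument to be routine. The main thing to check is that Proposition \ref{torfinita} applies to $\overline G$, which is finitely generated.
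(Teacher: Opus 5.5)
Your proof is correct and follows the paper's argument: the same vertex-inverting automorphism, the same central cyclic subgroup from Lemma \ref{centralsubgroup}, and Lemma \ref{cavelemma}(2) applied to the central extension with kernel the image of $C$ and finite quotient $G/C\gamma_k(G)$. The only difference is in how that quotient is shown to be finite: you apply Proposition \ref{torfinita} with $n=1$ to $G/C$, while the paper uses the standard fact that a finitely generated nilpotent group generated by torsion elements is finite.
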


\begin{proof}

Let $n \geq 2$ and write $\overline{G}= G/\gamma_n(G)$, with projection $\pi\colon G \to \overline{G}$. By Lemma \ref{centralsubgroup}, there exists an infinite cyclic subgroup $C \leq Z(G)$ such that, for every vertex $v$ of $\Gamma$, some positive power of the vertex generator $x_v$ belongs to $C$. Set $\overline{H} = \pi(C) \leq \overline{G}$. Then $\overline{H}$ is cyclic and central in $\overline{G}$. For every vertex $v$, choose $m(v) \geq 1$ such that $x_v^{m(v)} \in C$. Then the image of $\pi(x_v)$ in $\overline{G}/\overline{H}$ has finite order. Since $\Gamma$ is a tree, the vertex generators generate $G$, and hence
their images generate $\overline{G}/\overline{H}$. Thus $\overline{G}/\overline{H}$ is a finitely generated nilpotent group generated by elements of finite order, so it is finite.


Now, we consider the automorphism
$$\varphi\colon G \to G$$
such that $\varphi(w)=w^{-1}$ for every vertex letter $w$, and let $\overline{\varphi}\colon \overline{G}\to \overline{G}$ be the induced automorphism in $\overline{G}$. It is clear that $\overline{H}$ is invariant by $\overline{\varphi}$, thus we have the following commutative diagram:

\[
\xymatrix{
  1 \ar[r] & \overline{H} \ar[d]_-{\overline{\varphi}|_{\overline{H}}} \ar[r]^-{} & \overline{G} \ar[d]_-{\overline{\varphi}} \ar[r]^-{} & \overline{G}/\overline{H} \ar[d]^-{\overline{\overline{\varphi}}} \ar[r] & 1 \\
  1 \ar[r] & \overline{H} \ar[r]_-{} & \overline{G} \ar[r]_-{} & \overline{G}/\overline{H} \ar[r] & 1
}
\]
Since $\overline{G}/\overline{H}$ is finite, we conclude that $R(\overline{\overline{\varphi}})< \infty$. Furthermore, since $\overline{H}$ is cyclic and $\overline{\varphi}|_{\overline{H}}$ acts as multiplication by $-1$ on $\overline{H}$, it follows from item 2) of Lemma \ref{reidemeisterabelian} that $R(\overline{\varphi}|_{\overline{H}})< \infty$. Since $\overline{H}$ is central in $\overline{G}$, $R(\overline{\varphi})$ must be finite by Lemma \ref{cavelemma}, item 2). Thus, $\overline{G}=G/\gamma_n(G)$ does not have property $R_\infty$. Since $n$ is arbitrary, the $R_{\infty}$-nilpotency degree of $G$ is infinite.
\end{proof}

Now we deal with the lollipop GBS groups. Recall that these groups admit a presentation of the form
\[
G=\textnormal{loll}(a,b,c,d)=\langle x,y,t \: | \: x^a=y^b, \: ty^ct^{-1}=y^d\rangle,
\]where $a,b,c,d$ are non-zero integers with $0 < c\leq |d|$ and $a>1$. Following the case division indicated by K. Dekimpe and D. L. Gonçalves in Theorem \ref{DaciDekimpe}, we begin by studying the case where $c=|d|$. The lollipop GBS groups that satisfy this condition are called \emph{unimodular}. In general, a GBS group $G$ is said to be unimodular if, for any $g,h \in G$ with $h\neq 1$, the equality $gh^pg^{-1}=h^q$ implies $|p|=|q|$. We refer to \cite{Levitt} for further information on unimodular GBS groups.

\begin{theorem} \label{casoc=|d|}
If $c=|d|$, then the $R_{\infty}$-nilpotency degree of $G=\textnormal{loll}(a,b,c,d)$ is infinite.
\end{theorem}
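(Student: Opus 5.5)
The plan is to imitate the proof of Theorem \ref{rank0}. For every $n\geq 2$ I will exhibit an automorphism $\varphi$ of $G$ whose induced automorphism $\overline{\varphi}$ on $\overline{G}=G/\gamma_n(G)$ has $R(\overline{\varphi})<\infty$. The candidate, in both cases $d=c$ and $d=-c$, is
\[
\varphi(x)=x^{-1},\qquad \varphi(y)=y^{-1},\qquad \varphi(t)=t^{-1}.
\]
\emph{Well-definedness.} The relation $x^a=y^b$ is clearly preserved. If $d=c$, the relation says that $t$ commutes with $y^c$, so $t^{-1}y^{-c}t=y^{-c}$, which is the image of $ty^ct^{-1}=y^c$. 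If $d=-c$, from $ty^ct^{-1}=y^{-c}$ we get $t^{-1}y^ct=y^{-c}$. Inverting gives $t^{-1}y^{-c}t=y^{c}$, which is exactly the image of the relation. \emph{Bijectivity.} $\varphi$ is an involution, hence an automorphism. It induces $\overline{\varphi}$ on $\overline{G}$ because $\gamma_n(G)$ is characteristic.

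Next I will show that in $\overline{G}$ the images of $x$ and $y$ generate a finite normal subgroup, with cyclic quotient generated by $t$.
\begin{itemize}
\item If $d=c$, then $z=y^{bc}=x^{ac}$ commutes with $x$, with $y$, and with $t$ (since $t$ commutes with $y^c$). Hence $z$ is central and $\overline{H}=\langle \overline{z}\rangle$ is a central cyclic $\overline{\varphi}$-invariant subgroup. On $\overline{H}$ the map $\overline{\varphi}$ acts as $-\id$, so Lemma \ref{reidemeisterabelian} gives $R(\overline{\varphi}|_{\overline{H}})<\infty$. By Lemma \ref{cavelemma} 2), it then suffices to treat $Q=\overline{G}/\overline{H}$. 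In $Q$ the images of $x$ and $y$ have finite order.
\item If $d=-c$, put $z=y^{bc}$. Then $tzt^{-1}=z^{-1}$, so $z^2=[z^{-1},t]^{-1}$ up to sign conventions, and $z$ is central modulo $\gamma_2$. By induction using Proposition \ref{commut3}, $z^{2^{k}}\in\gamma_{k+1}(G)$. Hence $\overline{z}$ has finite order in $\overline{G}$, and so do $\overline{x},\overline{y}$. In this case I take $Q=\overline{G}$ directly.
\end{itemize}
In either case $Q$ is a finitely generated nilpotent group. Its torsion subgroup $\tau(Q)$ is therefore a finite characteristic subgroup, and it contains the images of $x$ and $y$. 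Thus $Q/\tau(Q)$ is cyclic, generated by the image of $t$. The induced map on $Q/\tau(Q)$ is inversion, so its Reidemeister number is finite, by Lemma \ref{reidemeisterabelian} 2) or trivially if the group is finite.

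The remaining step, and the only new ingredient, is passing finiteness of Reidemeister numbers up through the extension $1\to\tau(Q)\to Q\to Q/\tau(Q)\to 1$. Here the kernel is finite but not necessarily central, so Lemma \ref{cavelemma} 2) does not apply directly. I will prove directly that $R(\psi)\leq |A|\,R(\overline{\psi})$ for a finite normal $\psi$-invariant $A\leq B$. Suppose $p(b)$ and $p(b')$ are $\overline{\psi}$-conjugate. Replacing $b'$ by a $\psi$-twisted conjugate, we may assume $p(b)=p(b')$, so $b'=ab$ with $a\in A$. Hence each fibre of $\hat{p}$ contains at most $|A|$ classes. Combining the cases gives $R(\overline{\varphi})<\infty$ for every $n$, so no quotient $G/\gamma_n(G)$ has $R_\infty$ and the degree is infinite.

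The main obstacle I expect is the $d=-c$ case: one has to check carefully that $\overline{y}$ has finite order there, via the $2$-power argument in the lower central series. The finite-kernel counting lemma is elementary by comparison.
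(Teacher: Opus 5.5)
Your proof is correct, but it follows the template of Theorem \ref{rank0} rather than the paper's route.

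\textbf{The paper's route.} It treats $d=c$ and $d=-c$ together. It shows that $\gamma_2/\gamma_3$ is finite, using $[x,y]^a\in\gamma_3$, $[x,t]^a\gamma_3=[y,t]^b\gamma_3$, and either $[y,t]^{d-c}\in\gamma_3$ or $[t,y]^c\in\gamma_3$. Proposition \ref{torfinita} then makes every layer $\gamma_k/\gamma_{k+1}$ finite. Finally, finiteness of $R$ for the inversion automorphism is lifted from $G/\gamma_2$, where it acts as $-\id$, one central layer at a time via Lemma \ref{cavelemma} 2).

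\textbf{Your route.} You split into cases.
\begin{itemize}
\item For $d=c$ you use the genuinely central element $x^{ac}=y^{bc}$.
\item For $d=-c$ you use $[z^{j},t]=z^{2j}$ for $z=y^{bc}$ to get $z^{2^{k}}\in\gamma_{k+1}(G)$. This step needs neither Proposition \ref{commut3} nor the vacuous remark that $z$ is central modulo $\gamma_2$.
\end{itemize}
You then rely on two ingredients not in the paper: the finiteness of the torsion subgroup of a finitely generated nilpotent group, and your bound $R(\psi)\le |A|\,R(\overline{\psi})$ for a finite normal invariant kernel $A$. That bound is correct: lifting the twisting element places every class in a fibre of $\hat p$ inside a single coset $Ab$.

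\textbf{What each buys.} The paper's argument stays entirely within central extensions and needs no case distinction. Yours avoids computing $\gamma_2/\gamma_3$ and Proposition \ref{torfinita}. It also gives a more explicit picture of $G/\gamma_n(G)$: finite-by-cyclic when $d=-c$, and a central cyclic extension of a finite-by-cyclic group when $d=c$.

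\textbf{A shortcut combining both.} The paper's fact that $\gamma_2/\gamma_n$ is finite, together with your finite-kernel lemma, gives an even shorter proof. Apply the lemma directly to $1\to\gamma_2/\gamma_n\to G/\gamma_n\to G/\gamma_2\to 1$, where the induced map on the quotient is $-\id$.
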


\begin{proof}
We first claim that $\gamma_2/\gamma_3$ is finite. Since this is a finitely generated abelian group, it suffices to show that all its generators are of finite order, so we claim this is true. By Proposition \ref{commutmks}, $\gamma_2/\gamma_3$ is generated by $[x,y]\gamma_3, [x,t]\gamma_3$ and $[y,t]\gamma_3$. By the relations on $G$ and Proposition \ref{commut3}, we have $[x,y]^a\gamma_3=[x^a,y]\gamma_3=[y^b,y]\gamma_3=\gamma_3$. Similarly, we have $[x,t]^a\gamma_3=[y,t]^b\gamma_3$, so finite order for $[y,t]\gamma_3$ implies finite order for $[x,t]\gamma_3$. For $[y,t]\gamma_3$, we observe that, if $c=-d$, then $[y,t]^{d-c}\gamma_3=[y^{d-c},t]\gamma_3=[[t,y^c],t]\gamma_3=\gamma_3$ and $[y,t]\gamma_3$ has finite order. If $c=d$, then $ty^ct^{-1}=y^c$. Hence, $[t,y^c]=1$. But in $\gamma_2/\gamma_3$, we have $\gamma_3=[t,y^c]\gamma_3=[t,y]^c\gamma_3$, so $[y,t]\gamma_3$ also has finite order and the claim is true. By Proposition \ref{torfinita}, $\gamma_n/\gamma_{n+1}$ is finite for every $n \geq 2$.

Now, since $c=|d|$, it is easy to check that the map $\varphi\colon G \to G$ with $x \mapsto x^{-1}$, $y \mapsto y^{-1}$ and $t \mapsto t^{-1}$ is a well-defined automorphism of $G$. For every $n \geq 2$, let $\varphi_n$ denote the induced automorphism of $\varphi$ on the quotient $G/\gamma_n$. We will show that $R(\varphi_n)$ is finite by induction on $n\geq 3$. For $n=3$, we have the following natural commutative diagram:

\[
\xymatrix{
  1 \ar[r] & \gamma_2/\gamma_3 \ar[d]_-{\varphi_3'} \ar[r]^-{} & G/\gamma_3 \ar[d]_-{\varphi_3} \ar[r]^-{} &  G/\gamma_2 \ar[d]^-{\varphi_2} \ar[r] & 1 \\
  1 \ar[r] & \gamma_2/\gamma_3 \ar[r]_-{} & G/\gamma_3 \ar[r]_-{} & G/\gamma_2 \ar[r] & 1
}
\] Clearly, $R(\varphi_3')$ is finite since $\gamma_2/\gamma_3$ is finite. Furthermore, $G/\gamma_2$ is a finitely generated abelian group and $\varphi_2=-\id$ (in abelian notation), so $R(\varphi_2)$ is finite by Lemma \ref{reidemeisterabelian}, item 2). Since $\gamma_2/\gamma_3 \subset Z(G/\gamma_3)$, it follows that $R(\varphi_3)$ is finite by Lemma \ref{cavelemma}, item 2). The induction step follows similarly, using the commutative diagram
\[
\xymatrix{
  1 \ar[r] & \gamma_n/\gamma_{n+1} \ar[d]_-{\varphi_{n+1}'} \ar[r]^-{} & G/\gamma_{n+1} \ar[d]_-{\varphi_{n+1}} \ar[r]^-{} &  G/\gamma_n \ar[d]^-{\varphi_n} \ar[r] & 1 \\
  1 \ar[r] & \gamma_n/\gamma_{n+1} \ar[r]_-{} & G/\gamma_{n+1} \ar[r]_-{} & G/\gamma_n \ar[r] & 1,
}
\] the finiteness of $\gamma_n/\gamma_{n+1}$ and the induction hypothesis. This completes the proof.
\end{proof}

From now on, we can assume $1\leq c < |d|$. In particular, $|c-d|\geq 1$.

\begin{lemma} \label{comutatpotencomut}
    Let $G$ be a group, $n\geq 3$, $k,l\geq 1$ be integers, and let $g,h\in G$. If $g^k \gamma_n$ is central in $G/\gamma_n(G)$, then $[g^{kl},h]\gamma_{n+1}=[g^k,h^l]\gamma_{n+1}$.

    \begin{proof}
   We perform an induction on $l$. The case $l=1$ is clear. For the induction case, we will need the following group identities (see e.g. \cite{CMZ, MagKarSol}), which hold for all $x,y,z$ in a group $G$:
    \[ [xy,z]=[y,z][z,y,x][x,z] \qquad \mbox{and} \qquad [x,yz]=[x,y][x,z][z,x,y].\]
    We also need to observe that, if $g^k\gamma_n$ is central in $G/\gamma_n(G)$, then $[h,g^k]\gamma_n=\gamma_n$ for all $h\gamma_n\in G/\gamma_n(G)$, and thus, $[h_1,g^k,h_2]\gamma_{n+1}=\gamma_{n+1}$ for all $h_1\gamma_{n+1},h_2\gamma_{n+1}\in G/\gamma_{n+1}(G)$. Thus, for the induction case, we have:
    \begin{align*}
        [g^{k(l+1)},h]\gamma_{n+1} & =[g^{kl}g^k,h]\gamma_{n+1} \\
        & =[g^k,h][h,g^k,g^{kl}][g^{kl},h]\gamma_{n+1} \\
        & =[g^k,h][g^k,h^l]\gamma_{n+1} \\
        & =[g^k,h^{l+1}][h^l,g^k,h]^{-1}\gamma_{n+1} \\
        & =[g^k,h^{l+1}]\gamma_{n+1}.
    \end{align*}
    \end{proof}
\end{lemma}

\begin{theorem}\label{c-d=mdc}
If $|c-d|=\gcd(c,d)$, then the $R_{\infty}$-nilpotency degree of $G=\textnormal{loll}(a,b,c,d)$ is infinite.
\end{theorem}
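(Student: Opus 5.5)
The plan is to reduce to the argument of Theorem \ref{casoc=|d|}. The key observation is that the hypothesis $|c-d|=\gcd(c,d)$ forces the vertex generator $y$ to have bounded order in every nilpotent quotient $G/\gamma_n$. Since $1\le c<|d|$, if $d<0$ then $|c-d|=c+|d|>\gcd(c,d)$. So we must have $d>0$, and we write $m=\gcd(c,d)$, $c=mk$ and $d=m(k+1)$ with $k\ge 1$.

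The first step is an exact identity in $G$:
\[
[t,y^c]=ty^ct^{-1}y^{-c}=y^dy^{-c}=y^m .
\]
This shows $y^m\in\gamma_2(G)$. Now suppose $y^m\in\gamma_j(G)$ for some $j\ge 2$. Then $y^c=(y^m)^k\in\gamma_j(G)$, hence $y^m=[t,y^c]\in\gamma_{j+1}(G)$. By induction $y^m\in\gamma_n(G)$ for every $n$, so $y^m$ is trivial in every $G/\gamma_n$. Consequently $x^{am}=y^{bm}$ is trivial in every $G/\gamma_n$ as well.

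The second step is to recast the quotients. Set $K=\langle x,y\mid x^a=y^b,\ y^m=1\rangle$ and $H=K*\langle t\rangle$. In $G/\gamma_n(G)$ all relations of $H$ hold. Conversely, in $H/\gamma_n(H)$ we have $y^c=y^d=1$, so the relation $ty^ct^{-1}=y^d$ holds there trivially. The relations $x^a=y^b$ and $y^m=1$ lie in both groups, so the identity maps on $x,y,t$ induce mutually inverse epimorphisms. Hence $G/\gamma_n(G)\simeq H/\gamma_n(H)$ for all $n\ge 2$, and it suffices to show that $H$ has infinite $R_\infty$-nilpotency degree.

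The third step is the finiteness argument of Theorem \ref{casoc=|d|}, now applied to $H$. The map $\psi$ with $x\mapsto x^{-1}$, $y\mapsto y^{-1}$, $t\mapsto t^{-1}$ is clearly an automorphism of $H$. On $H/\gamma_2$ it induces $-\id$, so $R$ is finite there by Lemma \ref{reidemeisterabelian}, item 2). By Proposition \ref{commutmks}, $\gamma_2(H)/\gamma_3(H)$ is generated by $[x,y]\gamma_3$, $[x,t]\gamma_3$ and $[y,t]\gamma_3$. Proposition \ref{commut3} gives their orders:
\begin{itemize}
\item $[x,y]^a\gamma_3=[y^b,y]\gamma_3=\gamma_3$;
\item $[y,t]^m\gamma_3=[y^m,t]\gamma_3=\gamma_3$;
\item $[x,t]^{am}\gamma_3=[x^{am},t]\gamma_3=\gamma_3$, using $x^{am}=y^{bm}=1$.
\end{itemize}
So $\gamma_2(H)/\gamma_3(H)$ is finite, and by Proposition \ref{torfinita} every $\gamma_n(H)/\gamma_{n+1}(H)$ is finite. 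The same induction on $n$ as in Theorem \ref{casoc=|d|} then applies: use the central extensions $1\to\gamma_n/\gamma_{n+1}\to H/\gamma_{n+1}\to H/\gamma_n\to 1$ and Lemma \ref{cavelemma}, item 2). This shows that the automorphism induced by $\psi$ on each $H/\gamma_n(H)$ has finite Reidemeister number, so no nilpotent quotient of $G$ has property $R_\infty$.

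The main step to get right is the first one: the exact computation $[t,y^c]=y^m$ together with the bootstrapping $y^m\in\gamma_j\Rightarrow y^m\in\gamma_{j+1}$. Everything else is a transcription of the unimodular case. One has to be careful that the isomorphism $G/\gamma_n(G)\simeq H/\gamma_n(H)$ is checked in both directions. One also has to note that $\psi$ need not come from an automorphism of $G$ itself, which is irrelevant, since only automorphisms of the quotients matter.
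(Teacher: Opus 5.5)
Your proof is correct. The first two steps are the paper's: it also gets $y^m\in\gamma_n(G)$ for all $n$ from $y^{d-c}=[t,y^c]$ and $m\mid c$. It then passes to the same group $H=\langle x,y,t\mid x^a=y^b,\ y^m=1\rangle$ (your $K*\langle t\rangle$) and checks $G/\gamma_n(G)\simeq H/\gamma_n(H)$ in both directions.

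You differ only in the last step. The paper follows the template of Theorem~\ref{rank0}:
\begin{itemize}
\item Using Lemma~\ref{comutatpotencomut}, it shows by induction that $t^{(am)^{n-2}}\gamma_n(H)$ is central in $H/\gamma_n(H)$.
\item This element generates a central cyclic subgroup $\overline{K}$ containing a power of every generator, so $\overline{H}/\overline{K}$ is finite.
\item Lemma~\ref{cavelemma}, item 2), is then applied once, to $1\to\overline{K}\to\overline{H}\to\overline{H}/\overline{K}\to 1$.
\end{itemize}
You follow the template of Theorem~\ref{casoc=|d|} instead. You show $\gamma_2(H)/\gamma_3(H)$ is finite, apply Proposition~\ref{torfinita}, and induct along the central extensions $1\to\gamma_n/\gamma_{n+1}\to H/\gamma_{n+1}\to H/\gamma_n\to 1$.

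Your route is shorter and reuses existing machinery, so Lemma~\ref{comutatpotencomut} is not needed. The paper's route gives more structure: an explicit central cyclic subgroup of finite index in each quotient.

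Your version also shows what the passage to $H$ is really for. Finiteness of $\gamma_2/\gamma_3$ already holds in $G$ whenever $c\neq d$, because $[y,t]^{d-c}\gamma_3=[[t,y^c],t]\gamma_3=\gamma_3$, $[x,y]^a\gamma_3=\gamma_3$ and $[x,t]^a\gamma_3=[y,t]^b\gamma_3$. So the only thing $H$ contributes is that the inverting map $\psi$ is visibly an automorphism.
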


\begin{proof}
Let $m=\gcd(c,d)=|c-d|$. Note that $y^{d-c}=[t,y^c]\in \gamma_2(G)$, so $y^m \in \gamma_2(G)$. Since $m$ divides $c$, it follows that $y^c\in \gamma_2(G)$. Therefore, $y^{d-c}=[t,y^c] \in [G,\gamma_2(G)]=\gamma_3(G)$, i.e. $y^m\in \gamma_3(G)$. Inductively, we conclude that $y^m\in \gamma_n(G)$ for all $n\geq 2$. Define the group

\[ \begin{array}{rl}
    H & = \langle \tilde{x}, \tilde{y}, \tilde{t} \: | \: \tilde{x}^a=\tilde{y}^b, \: \tilde{t}\tilde{y}^c\tilde{t}^{-1}=\tilde{y}^d, \: \tilde{y}^m=1\rangle \vspace{0.2cm}\\
     & =\langle \tilde{x}, \tilde{y}, \tilde{t} \: | \: \tilde{x}^a=\tilde{y}^b, \: \tilde{y}^m=1\rangle.
\end{array}\] Note that the two presentations above are equivalent because $m$ divides both $c$ and $d$, and hence $\tilde{y}^m=1$ implies $\tilde{y}^c=\tilde{y}^d=1$. We first claim that $G/\gamma_n(G)\simeq H/\gamma_n(H)$, for all $n\geq 2$. In fact, the map $f\colon G\to H/\gamma_n(H)$ defined by $x \mapsto \tilde{x}\gamma_n(H)$, $y \mapsto \tilde{y}\gamma_n(H)$, $t \mapsto \tilde{t}\gamma_n(H)$ is a group homomorphism and induces naturally $\overline{f}\colon G/\gamma_n(G)\to H/\gamma_n(H)$. Similarly, the map $g\colon H\to G/\gamma_n(G)$ with $\tilde{x}\mapsto x\gamma_n(G)$, $\tilde{y}\mapsto y\gamma_n(G)$ and $\tilde{t} \mapsto t\gamma_n(G)$ is a group homomorphism (because $y^m\in \gamma_n(G)$) and induces $\overline{g}\colon H/\gamma_n(H)\to G/\gamma_n(G)$. But $\overline{f}$ and $\overline{g}$ are clearly inverses to each other, so the claim is true.

Now we claim that, for all $n\geq 3$, the quotient $\overline{H}=H/\gamma_n(H)\simeq G/\gamma_n(G)$ does not possess $R_{\infty}$. This will finish the proof, since $G/\gamma_2(G)$ is a finitely generated abelian group and hence does not have $R_\infty$ (see the proof of Corollary \ref{atleast2}).
We focus on $\overline{H}$ and, for clarity, relabel the generators $\tilde{x}, \tilde{y}, \tilde{t}$ of $H$ simply by $x,y,t$. We use a similar strategy as in the proof of Theorem \ref{rank0}. Note that $t^{am}\gamma_3(H)$ is central in $H/\gamma_3(H)$ because $[t^{am},y]\gamma_3(H)=[t,y^{am}]\gamma_3(H)=\gamma_3(H)$ and $[t^{am},x]\gamma_3(H)=[t,x^{am}]\gamma_3(H)=[t,y^{bm}]\gamma_3(H)=\gamma_3(H)$. Thus, by Lemma \ref{comutatpotencomut}, $t^{(am)^2}\gamma_4(H)$ is central in $H/\gamma_4(H)$, since
\begin{align*}
& [t^{(am)^2},y]\gamma_4(H)=[t^{am},y^{am}]\gamma_4(H)=\gamma_4(H), \\
& [t^{(am)^2},x]\gamma_4(H)=[t^{am},x^{am}]\gamma_4(H)=[t^{am},y^{bm}]\gamma_4(H)=\gamma_4(H).
\end{align*}

With the same reasoning, one shows inductively that $t^{(am)^{n-2}}\gamma_n(H)$ is central in $H/\gamma_n(H)$. Therefore, $\overline{K}=\langle t^{(am)^{n-2}}\gamma_n(H)\rangle$ is a central subgroup of $\overline{H}$ which contains a power of each of its generators. This implies the quotient $\overline{H}/\overline{K}$ coincides with its torsion subgroup and must be finite, since it is finitely generated nilpotent. Consider the automorphism $\varphi\colon H\to H$
with $\varphi(x)=x^{-1}$, $\varphi(y)=y^{-1}$ and $\varphi(t)=t^{-1}$. Let $\overline{\varphi}\colon \overline{H}\to \overline{H}$ be its induced automorphism on $\overline{H}$. The subgroup $\overline{K}$ is invariant under $\overline{\varphi}$, so we have the following commutative diagram:

\[
\xymatrix{
  1 \ar[r] & \overline{K} \ar[d]_-{\overline{\varphi}|_{\overline{K}}} \ar[r]^-{} & \overline{H} \ar[d]_-{\overline{\varphi}} \ar[r]^-{} & \overline{H}/\overline{K} \ar[d]^-{\overline{\overline{\varphi}}} \ar[r] & 1 \\
  1 \ar[r] & \overline{K} \ar[r]_-{} & \overline{H} \ar[r]_-{} & \overline{H}/\overline{K} \ar[r] & 1
}
\]

Since $\overline{H}/\overline{K}$ is finite, it follows that $R(\overline{\overline{\varphi}})<\infty$. Furthermore, $\overline{K}$ is cyclic and $\overline{\varphi}|_{\overline{K}}$ acts as a multiplication by $-1$, so $R(\overline{\varphi}|_{\overline{K}})<\infty$. Since $\overline{K}$ is central in $\overline{H}$, it follows from Lemma \ref{cavelemma} item 2) that $R(\overline{\varphi})<\infty$. This completes the proof.
\end{proof}

From now on, we may assume $|c-d|>\gcd(c,d)$. In particular, we assume $|c-d|>1$, since otherwise, we would have $|c-d|=1=\gcd(c,d)$.

\section{Normal forms, $G/\gamma_3$ and a refinement of Theorem \ref{DaciDekimpe}}\label{normalformsection}

In this section, we continue to study $G=\textnormal{loll}(a,b,c,d)$. We obtain a presentation for the group $G/\gamma_3$ and use a normal form for its elements to obtain that the $R_{\infty}$-nilpotency degree of $G$ is $2$ in some cases, and also to refine Theorem \ref{DaciDekimpe}. Recall that we are assuming $a>1$ since the introduction.

\subsection*{Step 1: an identification} Denote by $F=F(x,y,t)$ the free group on three generators $x,y,t$. We have $G=F/M$, where $M=\langle\!\langle R \rangle\!\rangle$, $R=\{x^ay^{-b},ty^ct^{-1}y^{-d}\}\subset F$. Write $\overline{F}=F/\gamma_3(F)$ and let $f\colon F \to \overline{F}$ be the usual epimorphism. We claim that $G/\gamma_3(G)\simeq\overline{F}/\langle\!\langle f(R) \rangle\!\rangle$. In fact, first note that $\gamma_3(G)=\gamma_3(F/M)=\gamma_3(F)M/M$. Therefore, by the third Isomorphism Theorem,
\[
\frac{G}{\gamma_3(G)}=\frac{F/M}{\gamma_3(F)M/M}\simeq \frac{F}{\gamma_3(F)M}.
\]
Now, since $\gamma_3(F)\leq \gamma_3(F)M$ and both are normal subgroups of $F$, again by the third Isomorphism Theorem we obtain
\[
\frac{F}{\gamma_3(F)M}\simeq \frac{F/\gamma_3(F)}{\gamma_3(F)M/\gamma_3(F)}=\frac{\overline{F}}{\gamma_3(F)M/\gamma_3(F)},
\] and the rightmost denominator above equals the normal closure $\langle\!\langle f(R) \rangle\!\rangle$ in $\overline{F}$, since $f$ is an epimorphism. This shows our claim and allows us to identify $G/\gamma_3(G)=\overline{F}/\langle\!\langle f(R) \rangle\!\rangle$.

\subsection*{Step 2: a presentation} The group $\overline{F}=F/\gamma_3(F)$  is the free nilpotent group of rank $3$ and nilpotency class $2$. It is well-known to be a polycyclic group, and a standard presentation for it can be computed, for example, using \cite[Ch.~9, Sec.~10]{Sims}. We will consider the following equivalent presentation for $\overline{F}$, by a slight abuse of notation, using the same letters $x,y,t$ for the generators:

Generators: $x,y,t,u,v,w$;

Relations:
\begin{itemize}
\item $u=[x,y]$;
\item $v=[x,t]$;
\item $w=[y,t]$;
\item $u,v,w$\ \textnormal{are central}.
\end{itemize}

The last row, of course, is an abbreviation for the set of relations
\begin{align*}
& [u,x]=[u,y]=[u,t]=[u,v]=[u,w]=[v,x]= 1, \\
& [v,y]=[v,t]=[v,w]=[w,x]=[w,y]=[w,t]=1.
\end{align*}

Since $G/\gamma_3(G)=\overline{F}/\langle\!\langle f(R) \rangle\!\rangle$ and
\[
f(ty^ct^{-1}y^{-d})=ty^ct^{-1}y^{-d}=[t,y^c]y^{c-d}=[t,y]^cy^{c-d}=y^{c-d}w^{-c},
\]
a possible presentation for $G/\gamma_3(G)$ is:

Generators: $x,y,t,u,v,w$;

Relations:
\begin{itemize}
\item $x^a=y^b$;
\item $y^{c-d}=w^c$;
\item $u=[x,y]$;
\item $v=[x,t]$;
\item $w=[y,t]$;
\item $u,v,w$\ \textnormal{are central}.
\end{itemize}

The above presentation, although correct, is not algorithmically useful to compute the normal form of elements in $G/\gamma_3(G)$. To make it more useful, we will perform the Tietze transformations of adding to the list of relations the following three consequences, which hold in $G/\gamma_3(G)$ thanks to Proposition \ref{commut3}:

\begin{itemize}
    \item $u^n=1$ for $n=\gcd(a,b,d-c)$. Indeed, $u^a=[x,y]^a=[x^a,y]=[y^b,y]=1$, $u^b=[x,y]^b=[x,y^b]=[x,x^a]=1$ and $u^{c-d}=[x,y]^{c-d}=[x,y^{c-d}]=[x,w^c]=1$;
    \item $v^a=w^b$. Indeed, $v^a=[x,t]^a=[x^a,t]=[y^b,t]=[y,t]^b=w^b$.
    \item $w^{c-d}=1$. Indeed, $w^{c-d}=[y,t]^{c-d}=[y^{c-d},t]=[w^c,t]=1$.
\end{itemize}

Hence, we will use the following presentation for $G/\gamma_3(G)$:

Generators: $x,y,t,u,v,w$;

Relations:
\begin{itemize}
\item $x^a=y^b$;
\item $y^{c-d}=w^c$;
\item $u^n=1$ for $n=\gcd(a,b,c-d)$;
\item $v^a=w^b$;
\item $w^{c-d}=1$;
\item $u=[x,y]$;
\item $v=[x,t]$;
\item $w=[y,t]$;
\item $u,v,w$\ \textnormal{are central}.
\end{itemize}

\subsection*{Step 3: normal form and uniqueness.} At this step, we recommend that readers who are not familiar with polycyclic presentations consult \cite[Ch.~9]{Sims}. The presentation given above to the free nilpotent group $\overline{F}=F/\gamma_3(F)$ is called a \emph{consistent polycyclic presentation}. This means that there is uniqueness in the normal form of elements, given as follows: for every $g \in \overline{F}$, there are unique integers $k_i$, $1 \leq i \leq 6$, such that
\[
g=x^{k_1}y^{k_2}t^{k_3}u^{k_4}v^{k_5}w^{k_6}.
\] Denote by $N$ the normal closure $N= \langle\!\langle f(R) \rangle\!\rangle=\langle\!\langle x^ay^{-b},y^{c-d}w^{-c} \rangle\!\rangle \lhd \overline{F}$. In this context, we say that $N$ is \emph{normally generated} by the set $f(R)$ and call its elements the \emph{normal generators} of $N$. A normal form for elements in $G/\gamma_3(G)=\overline{F}/N$ involves certain positive integers called \emph{relative orders}, as we describe now. In $G/\gamma_3(G)$, denote for a moment $(x,y,t,u,v,w,1)=(g_1,g_2,g_3,g_4,g_5,g_6,g_7)$ and let $G_i$ be the subgroup generated by $g_i,\dots ,g_7$ for $1 \leq i \leq 7$. By the presentation of $G/\gamma_3(G)$, one can easily check that $G_i \rhd G_{i+1}$ for $1 \leq i \leq 6$. Since $G_1=G/\gamma_3(G)$ and $G_7=1$, this is a polycyclic series for $G/\gamma_3(G)$. The relative order of $g_i$ is defined to be the order of the cyclic group $G_i/G_{i+1}=\langle g_i G_{i+1}\rangle$; equivalently, it is the least positive integer $m_i$ such that $g_i^{m_i} \in G_{i+1}$ if $G_i/G_{i+1}$ is finite, and infinite otherwise.

Next, we use this terminology to describe a normal form for elements in $G/\gamma_3(G)$.

\begin{lemma}\label{normalformlemma}
For every $g \in G/\gamma_3(G)$, there are unique integers $k_i$, $1 \leq i \leq 6$, such that $k_1,k_5 \in [0,a)$, $k_2,k_6 \in [0,|c-d|)$, $k_4 \in [0,n)$ and
\[
g=x^{k_1}y^{k_2}t^{k_3}u^{k_4}v^{k_5}w^{k_6}.
\]    
\end{lemma}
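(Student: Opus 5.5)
We prove existence of the normal form by a collection argument, and uniqueness by checking that the presentation from Step 2 is a \emph{consistent} polycyclic presentation in the sense of \cite[Ch.~9]{Sims}. With respect to the series $G_1 \rhd G_2 \rhd \cdots \rhd G_7=1$ generated by $(x,y,t,u,v,w)$, we first rewrite the presentation of $G/\gamma_3(G)$ in polycyclic form. The power relations are
\begin{itemize}
\item $x^a=y^b$;
\item $y^{|c-d|}=w^{\pm c}$, with the sign of $c-d$;
\item $u^n=1$;
\item $v^a=w^b$, reduced modulo $|c-d|$ in the exponent of $w$;
\item $w^{|c-d|}=1$;
\item no power relation for $t$.
\end{itemize}
The conjugation relations are those encoding $yx=u^{-1}xy$, $tx=v^{-1}xt$, $ty=w^{-1}yt$, together with centrality of $u,v,w$. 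The claimed ranges for the $k_i$ are exactly the relative orders $a,|c-d|,\infty,n,a,|c-d|$ suggested by these power relations.

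\emph{Existence.} Any word in $x^{\pm1},y^{\pm1},t^{\pm1}$ can be collected into the form $x^{k_1}y^{k_2}t^{k_3}u^{k_4}v^{k_5}w^{k_6}$. We move the letters $x$ to the left and then the letters $y$ past $t$, producing only central commutator letters via the conjugation relations. We then reduce exponents from the bottom up. Use $w^{|c-d|}=1$ to bring $k_6$ into $[0,|c-d|)$. Use $v^a=w^b$ to bring $k_5$ into $[0,a)$, which only changes $k_6$, and then reduce $k_6$ again. Use $u^n=1$ for $k_4$. Finally, use $y^{|c-d|}=w^{\pm c}$ for $k_2$ and $x^a=y^b$ for $k_1$. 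Each replacement only alters exponents of later generators, so the process terminates. This shows that every $g$ has such an expression, and equivalently that each relative order is at most the stated value.

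\emph{Uniqueness.} Uniqueness is equivalent to the relative orders being exactly $a,|c-d|,\infty,n,a,|c-d|$, that is, to consistency of this polycyclic presentation. We verify this with the finite list of consistency test words from \cite[Ch.~9, Sec.~8]{Sims}. These are overlaps of the form $g_k(g_jg_i)=(g_kg_j)g_i$, $g_j^{m_j}g_i=g_j^{m_j-1}(g_jg_i)$, $g_jg_i^{m_i}=(g_jg_i)g_i^{m_i-1}$ and $g_i^{m_i}g_i=g_ig_i^{m_i}$. Since $u,v,w$ are central and $\overline F$ has class $2$, most tests are immediate. The substantive ones compare conjugating a power relation with the power of the conjugate, and use Proposition \ref{commut3}:
\begin{itemize}
\item $[x^a,y]=u^a$ must equal $[y^b,y]=1$; this holds because $n\mid a$.
\item $[x^a,t]=v^a$ must equal $[y^b,t]=w^b$; this is exactly the relation $v^a=w^b$.
\item $[y^{c-d},x]=u^{-(c-d)}$ must equal $[w^c,x]=1$; this holds because $n\mid c-d$.
\item $[y^{c-d},t]=w^{c-d}$ must equal $[w^c,t]=1$; this holds because $w^{|c-d|}=1$.
\item For $v$: $(v^a)^{|c-d|}=w^{b|c-d|}=1$, so $v$ has finite order compatibly with $w$.
\end{itemize}
We also check that $t$ may consistently have infinite relative order. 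For this we use the map $G/\gamma_3(G)\to\mathbb Z$ sending $x,y\mapsto 0$ and $t\mapsto 1$, which is well defined since both relators have zero exponent sum in $t$.

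We expect the main obstacle to be organising this consistency check completely and correctly, since there are many overlap pairs and sign issues when $c-d<0$. As a fallback, if the test words become unwieldy, we would construct the group directly. Take the set of tuples $(k_1,\dots,k_6)$ in the stated ranges, with the multiplication forced by collection. Verifying the defining relations there yields a surjection from this group onto a group realising all these tuples distinctly, and hence uniqueness.
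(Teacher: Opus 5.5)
Your proposal is correct, and it reaches uniqueness by a different route from the paper.

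\textbf{Comparison of routes.} The paper never tests consistency of the quotient presentation directly. It starts from the already consistent presentation of the free nilpotent group $\overline F=F/\gamma_3(F)$. It proves that the normal closure $N$ of $f(R)$ equals the \emph{subgroup} generated by $U=\{x^ay^{-b},\,y^{c-d}w^{-c},\,u^n,\,v^aw^{-b},\,w^{c-d}\}$, by conjugating the two relators by $x^{\pm1},y^{\pm1},t^{\pm1}$. It then reads the relative orders $a,|c-d|,n,a,|c-d|$ off the corner entries of the row-reduced exponent matrix, and handles $t$ through the abelianization much as you do.

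You instead run the overlap tests on the polycyclic presentation of $G/\gamma_3(G)$ itself. The two computations are essentially the same in disguise, since each of the paper's nontrivial conjugations matches one of your overlaps. For instance, $y(x^ay^{-b})y^{-1}=u^{-a}x^ay^{-b}$ is your test with $[x^a,y]$, and $t(y^{c-d}w^{-c})t^{-1}=w^{-(c-d)}y^{c-d}w^{-c}$ is your test with $[y^{c-d},t]$. The paper's method is a reusable linear-algebra procedure for arbitrary quotients of a free nilpotent group; it is reused for $\overline{BS}$ in the proof of Theorem~\ref{Sylow1}. Yours is intrinsic to the quotient presentation and avoids having to identify $N$ as a subgroup.

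\textbf{Two corrections to your write-up.}

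First, your list of substantive overlaps misses $x\cdot x^a=x^a\cdot x$. Write $x^a$ in normal form as $y^{b'}w^{e}$ with $b'\equiv b\pmod{|c-d|}$. This test then demands $u^{b'}=1$, and it is the only place where $n\mid b$ enters; it matches the paper's computation $x(x^ay^{-b})x^{-1}=x^ay^{-b}u^{-b}$. The test does pass, since $n$ divides both $b$ and $c-d$. However, your bullets as written would pass equally well with $\gcd(a,c-d)$ in place of $n$, so this overlap must be included.

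Second, in the existence part the reductions must go top-down, as in the paper:
\begin{enumerate}
\item reduce $k_1$ via $x^a=y^b$, which changes $k_2$;
\item reduce $k_2$ via $y^{|c-d|}=w^{\pm c}$, which changes $k_6$;
\item then reduce $k_4$, $k_5$, $k_6$.
\end{enumerate}
In your bottom-up order, the last step (reducing $k_1$) pushes $k_2$ out of $[0,|c-d|)$ again. Your own remark that each replacement only alters later exponents is exactly what makes the top-down order finish in one pass.
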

\begin{proof}
We follow the language and procedure on Sims's work \cite[Sec.~9.5, 9.6]{Sims}. Existence of the normal form above follows easily from the existence of normal form for elements in $\overline{F}$ and the successive use of the \emph{power relations} of $G/\gamma_3(G)$, which are $x^a=y^b$, $y^{c-d}=w^c$, $u^n=1$, $v^a=w^b$ and $w^{c-d}=1$, in this exact order, in an algorithm called \emph{collection}. By Chapter 9 of \cite{Sims}, showing uniqueness is equivalent to showing that the relative order of $x,y,u,v$ and $w$ is, respectively, $a,|c-d|,n,a$ and $|c-d|$, and that the relative order of $t$ is infinite. The latter claim follows easily, for example, from the fact that $t$ has infinite order in the abelian group $G/\gamma_2(G)$, which is a quotient of $G/\gamma_3(G)$, by the third Isomorphism Theorem. To determine the other relative orders, we first briefly describe an explicit procedure, based on \cite{Sims}, to compute relative orders in the quotient group $\overline{F}/N$ of $\overline{F}$. The procedure below works, in fact, for any finitely generated free nilpotent group and any normal subgroup. 
\begin{enumerate}
    \item Denote the standard list of generators of $\overline{F}$ by $(f_1,\ldots,f_n)$ (the ordering is relevant). In our case, $(f_1,f_2,f_3,f_4,f_5,f_6)=(x,y,t,u,v,w)$;
    \item Find a finite list $U=(u_1,\ldots,u_s)$ of elements of $N$ such that $N=\langle u_1,\ldots,u_s\rangle$. In our case, this involves computing the conjugates of the form $f_imf_i^{-1}$ and $f_i^{-1}mf_i$ for each normal generator $m$ of $N$ and each generator $f_i$ of $\overline{F}$, in order to determine which elements need to be added to the list of normal generators to obtain such a list $U$.
    \item For each $1 \leq i \leq s$, write $u_i=f_1^{k_{i1}}\cdots f_n^{k_{in}}$, where $k_{ij} \in \mathbb{Z}$, $1 \leq j \leq n$. Consider the $s \times n$ matrix $A=(k_{ij})$, called the \emph{exponent matrix} associated to $U$ in Sims's terminology.
    \item Perform the \emph{row reduction} process to $A$. The three allowed row operations are: interchanging two rows, multiplying a row by $-1$ and adding an integral multiple of one row to another one. After suitable row operations and deleting all zero rows (if any), we obtain an $r \times n$ matrix $\tilde{A}=(\alpha_{ij})$, for some $1 \leq r \leq \min\{s,n\}$, with no zero rows and with the following property: if the first nonzero entry of the $i$-th row is $\alpha_{ij_i}$ (these are called the \emph{corner entries} of $\tilde{A}$), then $\alpha_{ij_i}>0$ and $j_1<j_2<\cdots j_r$. In other words, corner entries are positive and appear in columns progressively further to the right.
    \item Then, by Sims's work \cite{Sims}, especially Sections 9.5 and 9.6, the relative orders of the image of the generators $f_{j_i}$ in the quotient group $\overline{F}/N$ are exactly the respective corner entries $\alpha_{ij_i}$, while the relative orders of the images of the other generators $f_j$ (if any) are infinite.
    
\end{enumerate}

In our particular case, we claim that $N=\langle U \rangle$ for $U=\{ x^ay^{-b},y^{c-d}w^{-c}, u^n,v^aw^{-b},w^{c-d} \}$. By the previous subsection, the generators $u^n,v^aw^{-b}$ and $w^{c-d}$ must belong to $N$, since relations $u^n=1,v^a=w^{b}$ and $w^{c-d}=1$ hold on $G/\gamma_3(G)=\overline{F}/N$. Therefore, $\langle U \rangle \subset N$. To show that $N \subset \langle U \rangle$, it suffices to show that $\langle U \rangle$ is normal in $\overline{F}$ and contains the images under $f$ of the
defining relators $x^ay^{-b}$ and $y^{c-d}w^{-c}$. The latter is immediate from the definition of $U$. Since $u,v$ and $w$ are central elements in $\overline{F}$, it suffices to show that, for every $g \in \{x,x^{-1},y,y^{-1},t,t^{-1}\}$, the elements $g(x^ay^{-b})g^{-1}$ and $g(y^{c-d}w^{-c})g^{-1}$ belong to $\langle U \rangle$. Let $\epsilon=\pm1$ for a moment. Since $n$ divides $a$, $b$ and $c-d$, we have $u^a,u^b,u^{c-d} \in \langle U \rangle$. Thus,
\begin{align*}
x^\epsilon(x^ay^{-b})x^{-\epsilon} 
&= x^ax^\epsilon y^{-b} x^{-\epsilon}=x^a[x^\epsilon,y^{-b}]y^{-b}=x^au^{-\epsilon b}y^{-b}=x^ay^{-b}u^{-\epsilon b} \in \langle U \rangle, \\
y^\epsilon(x^ay^{-b})y^{-\epsilon} 
&= y^\epsilon x^ay^{-\epsilon} y^{-b}=[y^\epsilon,x^{a}]x^ay^{-b}=u^{-\epsilon a} x^ay^{-b}\in \langle U \rangle, \\
t^\epsilon(x^ay^{-b})t^{-\epsilon} 
&= t^{\epsilon}x^at^{-\epsilon}t^{\epsilon}y^{-b}t^{-\epsilon}=[t^\epsilon,x^a]x^a[t^\epsilon,y^{-b}]y^{-b}=v^{-\epsilon a}x^a w^{\epsilon b}y^{-b}= (v^a w^{-b})^{-\epsilon}x^a y^{-b}\in \langle U \rangle,\\
x^\epsilon(y^{c-d}w^{-c})x^{-\epsilon}
&=  x^\epsilon y^{c-d}x^{-\epsilon}w^{-c}=[x^\epsilon,y^{c-d}]y^{c-d}w^{-c}=u^{\epsilon(c-d)}y^{c-d}w^{-c} \in \langle U \rangle,\\
y^\epsilon(y^{c-d}w^{-c})y^{-\epsilon}
&=  y^\epsilon y^{c-d}y^{-\epsilon}w^{-c}=y^{c-d}w^{-c} \in \langle U \rangle,\\
t^\epsilon(y^{c-d}w^{-c})t^{-\epsilon}
&= t^\epsilon y^{c-d}t^{-\epsilon}w^{-c}=[t^\epsilon,y^{c-d}]y^{c-d}w^{-c}= w^{-\epsilon(c-d)}y^{c-d}w^{-c}\in \langle U \rangle,
\end{align*} which shows the claim.
The exponent matrix associated to $U$ is
\[
A=
\begin{pmatrix}
a & -b  & 0 & 0 & 0 & 0    \\
0 & c-d & 0 & 0 & 0 & -c   \\
0 & 0   & 0 & n & 0 & 0    \\
0 & 0   & 0 & 0 & a & -b   \\
0 & 0   & 0 & 0 & 0 & c-d
\end{pmatrix}.
\]For any integer $l$, denote by $l \bmod{|c-d|}$ the remainder of the euclidean division of $l$ by $|c-d|$. If we perform row reduction on $A$, we obtain
\[
\tilde{A}=
\begin{pmatrix}
a & -b\bmod{|c-d|} & 0 & 0 & 0 & -k\epsilon c \bmod{|c-d|}   \\
0 & |c-d| & 0 & 0 & 0 & -\epsilon c \bmod{|c-d|}   \\
0 & 0   & 0 & n & 0 & 0   \\
0 & 0   & 0 & 0 & a & -b \bmod{|c-d|}  \\
0 & 0   & 0 & 0 & 0 & |c-d|
\end{pmatrix},
\] where $k$ is such that $0 \leq -b +k|c-d|<|c-d|$ and $\epsilon=\textnormal{sgn}(c-d)$ is the sign of the nonzero integer $c-d$.
Then, the relative orders of $x,y,u,v$ and $w$ in the group $\overline{F}/N=G/\gamma_3(G)$ are the \emph{corner entries} of $\tilde{A}$, which are, respectively, $a,|c-d|,n,a$ and $|c-d|$, as desired.
\end{proof}

Now, we state and prove the main results of the section.

\begin{theorem}\label{pirulitosindice2}
If $G=\textnormal{loll}(a,b,c,d)$ satisfies $\gcd(a,c-d)=1=\gcd(c,d)$, and the number $1<|c-d|$ is odd, then the $R_{\infty}$-nilpotency degree of $G$ is $2$.
\end{theorem}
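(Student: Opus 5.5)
The plan is to show directly that $G/\gamma_3(G)$ has property $R_\infty$. Since the degree is at least $2$ by Corollary \ref{atleast2}, this gives that the $R_\infty$-nilpotency degree is exactly $2$. Write $D=c-d$, so $|D|>1$ is odd. Since $\gcd(c,d)=1$ we have $\gcd(c,D)=1$, and since $\gcd(a,D)=1$ we get $n=\gcd(a,b,D)=1$, so $u=1$ in the presentation of Step 2.

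\emph{Abelianization.} The group $G/\gamma_2=\langle x,y,t\mid x^a=y^b,\ y^D=1\rangle_{ab}$ is $T\times\langle t\rangle$. Here $T=\langle x,y\rangle$ is finite of order $a|D|$, by Lemma \ref{normalformlemma} or directly.

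\emph{Splitting by the action on $t$.} Let $\varphi$ be an automorphism of $G/\gamma_3$. The torsion subgroup of $G/\gamma_2$ is characteristic, with quotient $\mathbb{Z}$ generated by the image of $t$. Hence $\varphi(t)\equiv t^{\varepsilon}\tau \pmod{\gamma_2}$ with $\varepsilon=\pm1$ and $\tau\in T$.

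If $\varepsilon=1$, then $\varphi$ induces the identity on this quotient $\mathbb{Z}$. Its Reidemeister number is infinite by Lemma \ref{reidemeisterabelian}, so $R(\varphi)=\infty$ by Lemma \ref{cavelemma} item 1), applied through Corollary \ref{characteristic}-type diagrams; note that the preimage of the torsion is characteristic in $G/\gamma_3$.

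So the whole point is to rule out $\varepsilon=-1$, and this is where oddness of $D$ enters.

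\emph{The image of $y$.} Since $|T|=a|D|$ with $\gcd(a,D)=1$, the elements of $T$ of order dividing $|D|$ form a characteristic (Hall) subgroup. Because $y$ has order $|D|$ in $T$, this subgroup is $\langle y\rangle$. Hence $\varphi(y)=y^j g$ with $\gcd(j,D)=1$ and $g\in\gamma_2/\gamma_3$.

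\emph{The image of $w$.} The commutator pairing $G/\gamma_2\times G/\gamma_2\to\gamma_2/\gamma_3$ is bilinear by Proposition \ref{commut3}. Commutators between elements of $T$ are powers of $u=1$. Therefore
\[
\varphi(w)=[\varphi(y),\varphi(t)]=[y^j,t^{-1}\tau]=w^{-j}.
\]

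\emph{Contradiction.} Now apply $\varphi$ to the relation $y^D=w^c$, using that $g$ and $w$ are central:
\[
\varphi(y)^D=y^{jD}g^D=w^{jc}g^D, \qquad \varphi(w)^c=w^{-jc}.
\]
Equating these gives $g^D=w^{-2jc}$. The group $\gamma_2/\gamma_3=\langle v,w\mid v^a=w^b,\ w^D=1\rangle$ has order $a|D|$ and $w$ has order $|D|$ (Lemma \ref{normalformlemma}). So $g^D$ lies in the $a$-part and $w^{-2jc}$ lies in the $D$-part; as $\gcd(a,D)=1$, both must be trivial. Since $D$ is odd and $\gcd(jc,D)=1$, we have $\gcd(2jc,D)=1$, so $w^{2jc}=1$ forces $w=1$. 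This contradicts $|D|>1$.

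Hence $\varepsilon=1$ for every automorphism, and $G/\gamma_3$ has $R_\infty$. The main obstacle is the $\varepsilon=-1$ step. The two delicate points there are identifying $\varphi(y)$ modulo $\gamma_2$ through the Hall subgroup $\langle y\rangle$ of $T$, and separating the $a$-part from the $D$-part in $\gamma_2/\gamma_3$; both rely on the relative orders computed in Lemma \ref{normalformlemma}.
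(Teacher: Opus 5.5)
Your proof is correct and follows essentially the paper's route. You reduce to showing that every automorphism of $G/\gamma_3$ induces the identity on $(G/\gamma_2)/\tau(G/\gamma_2)\cong\mathbb{Z}$, and rule out $t\mapsto t^{-1}$ by applying $\varphi$ to $y^{c-d}=w^c$ with $w=[y,t]$, which yields the impossible congruence $2c\,j\equiv 0 \pmod{c-d}$. The only difference is presentational: the paper extracts $k_{21}=0$ and $k_{24}=0$ from uniqueness of the normal form, while you obtain the same facts from the characteristic Hall subgroup $\langle y\rangle$ of $T$ and from the coprime splitting of $\gamma_2/\gamma_3$ into its $a$-part and $(c-d)$-part.
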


\begin{proof}
By Corollary \ref{atleast2}, it suffices to show that $Q=G/\gamma_3$ possesses property $R_\infty$. We use the presentation for $Q$ displayed in ``Step 2 - a presentation'':
\begin{align*}
Q = \langle x,y,t,u,v,w\ | \ &x^a = y^b, y^{c-d} = w^c, u^n = 1, v^a = w^b, w^{c-d} = 1,\\ &u = [x, y], v = [x, t], w = [y, t], u,v,w\textnormal{ central}\rangle,
\end{align*} where $n = \gcd(a,b,c-d)$, together with the normal form and relative orders established in Lemma \ref{normalformlemma}. Under the hypotheses of the present theorem, $u$ is trivial because $n=1$. Thus Lemma \ref{normalformlemma} gives a normal form
\[
g = x^{k_1}y^{k_2}t^{k_3}v^{k_5}w^{k_6}
\] for every $g \in Q$, with bounded exponents on $x,y,v,w$ and arbitrary exponent on $t$. Hence
$\langle x,y,v,w\rangle$ is finite, while $Q/\langle x, y, v,w\rangle$ is infinite cyclic, generated by the image of $t$. Therefore $\tau (Q) = \langle x,y,v,w \rangle$. Moreover, since $Q$ is generated by $x, y, t$ and has nilpotency class at most $2$, its commutator subgroup is generated by $u = [x, y], v = [x, t]$ and $w = [y, t]$. Since $u = 1$, we obtain $\gamma_2(Q) = \gamma_2(G)/\gamma_3(G) = \langle v,w\rangle$. Both $\tau(Q)$ and $\gamma_2(Q)$ are characteristic subgroups of Q.

Let $\varphi \in Aut(G/\gamma_3)$. We have the following commutative diagram:

\[
\xymatrix{
  1 \ar[r] & \gamma_2/\gamma_3 \ar[d]_-{\varphi'} \ar[r]^-{} & G/\gamma_3 \ar[d]_-{\varphi} \ar[r]^-{} &  G/\gamma_2 \ar[d]^-{\overline{\varphi}} \ar[r] & 1 \\
  1 \ar[r] & \gamma_2/\gamma_3 \ar[r]_-{} & G/\gamma_3 \ar[r]_-{} & G/\gamma_2 \ar[r] & 1
}\] Now, we use the normal form for elements in $G/\gamma_3$ and the fact that the abelian subgroups $\gamma_2/\gamma_3=\langle v,w \rangle$  and $\tau(G/\gamma_3)=\langle x,y,v,w\rangle$ are characteristic to write
\begin{eqnarray}\label{phi.1}
\varphi(x) &=& x^{k_{11}}y^{k_{12}}v^{k_{14}}w^{k_{15}},\\
\varphi(y) &=& x^{k_{21}}y^{k_{22}}v^{k_{24}}w^{k_{25}},\\
\varphi(t) &=& x^{k_{31}}y^{k_{32}}t^{k_{33}}v^{k_{34}}w^{k_{35}},\\
\varphi(v) &=& v^{k_{44}}w^{k_{45}},\\
\label{phi.5} \varphi(w) &=& v^{k_{54}}w^{k_{55}},
\end{eqnarray} for $k_{i1},k_{i4} \in [0,a)$ and $k_{i2},k_{i5} \in [0,|c-d|)$. Furthermore, we have $k_{33}=\pm 1$. In fact, the abelianization $A=G/\gamma_2(G)$ can be presented as
\[
A= \langle x, y, t \ |\  x^a = y^b, y^{c-d} = 1, [x, y] = [x, t] = [y, t] = 1\rangle.
\]
Note that no defining relation above involves $t$ except commutativity, so the image of $t$ gives an infinite cyclic summand modulo torsion in $A$. Moreover, $y$ is torsion because
$y^{c-d} = 1$ in $A$, and $x$ is torsion because $x^a = y^b$ and $y$ is torsion. Thus $\tau(A) = \langle x, y\rangle$, and $A/\tau(A) \simeq \langle t \rangle \simeq \mathbb{Z}$. In this infinite cyclic quotient, the induced automorphism $\overline{\overline{\varphi}}$ maps $t$ to $t^{k_{33}}$. Hence, $k_{33}=\pm 1$.

We will show in a moment that $k_{33}=1$. But first, we claim that this fact implies $R(\varphi)=\infty$. Indeed, if $k_{33}=1$, then $\overline{\overline{\varphi}}$ is the identity map on $\mathbb{Z} \simeq \langle t \rangle \simeq A/\tau(A)$, so $R(\overline{\overline{\varphi}})=\infty$. Since
$\tau(A)$ is characteristic in $A$, we have $R(\overline{\varphi})=\infty $ by Lemma \ref{cavelemma} part (1), which in turn implies $R(\varphi)=\infty$, again by Lemma \ref{cavelemma} part (1).

Now we prove that $k_{33}=1$. Let us prove that
\begin{equation}\label{equation1}
x^{(c-d)k_{21}}y^{(c-d)k_{22}}v^{(c-d)k_{24}}w^{(c-d)k_{25}} =v^{ck_{21}k_{33}}w^{ck_{22}k_{33}}.
\end{equation} Indeed, consider the relations $y^{c-d}=w^c$ and $w=[y,t]$. Applying $\varphi$ to both and combining them we obtain
\begin{equation}\label{equation2}
\varphi(y)^{c-d} = \varphi(w)^c=\varphi([y,t])^c=[\varphi(y),\varphi(t)]^c.
\end{equation}
Now, using the displayed forms of $\varphi(y)$ and $\varphi(t)$, and using that $u = [x, y] = 1$ and that $v,w$ are central, we have
\begin{align*}
[\varphi(y), \varphi(t)] &= [x^{k_{21}}y^{k_{22}}v^{k_{24}}w^{k_{25}} , x^{k_{31}}y^{k_{32}}t^{k_{33}}v^{k_{34}}w^{k_{35}}]\\
&= [x^{k_{21}}y^{k_{22}},t^{k_{33}}]\\
&= [x,t]^{k_{21}k_{33}}[y,t]^{k_{22}k_{33}} \\
&= v^{k_{21}k_{33}}w^{k_{22}k_{33}}.
\end{align*} Here we used the identities $[\alpha\beta, \gamma] = [\alpha, \gamma][\beta, \gamma]$ and $[\alpha^r, \gamma^s] = [\alpha, \gamma]^{rs}$ of the class-2 nilpotent group $G/\gamma_3$, which are direct consequences of Proposition \ref{commut3}. Therefore, the right-hand side of Equation \ref{equation2} is
\[
[\varphi(y), \varphi(t)]^c = (v^{k_{21}k_{33}}w^{k_{22}k_{33}})^c= v^{ck_{21}k_{33}}w^{ck_{22}k_{33}}.
\] 
The fact that $u = [x, y] = 1$ and that $v,w$ are central also implies that the subgroup $\langle x,y,v,w\rangle$ is abelian. Hence,
the left-hand side of Equation \ref{equation2} is
\[
\varphi(y)^{c-d}=(x^{k_{21}}y^{k_{22}}v^{k_{24}}w^{k_{25}})^{c-d}=x^{(c-d)k_{21}}y^{(c-d)k_{22}}v^{(c-d)k_{24}}w^{(c-d)k_{25}},
\]so Equation \ref{equation1} is true. Since $1<a$, and hence $x\neq 1$, by uniqueness of normal form Equation~\ref{equation1} implies $(c-d)k_{21}\equiv 0 \pmod{a}$. Since $\gcd(a,c-d)=1$, it follows that $k_{21}\equiv0 \pmod{a}$. Because $0 \leq k_{21}<a$, we obtain $k_{21}=0$. Hence, 
\[
y^{(c-d)k_{22}}v^{(c-d)k_{24}}w^{(c-d)k_{25}} = w^{ck_{22}k_{33}}.
\] By using $w^{c-d}=1$ and $y^{c-d}=w^c$, we obtain
\[
v^{(c-d)k_{24}}w^{ck_{22}} = w^{ck_{22}k_{33}}.
\] We proceed in a similar manner to the argument above. Since $v \neq 1$ (because $1<|c-d|$), by uniqueness of normal form, $(c-d)k_{24} \equiv 0 \pmod a$. Since $\gcd(a,c-d)=1$ and $0 \leq k_{24} < a$, we obtain $k_{24} = 0$, which gives us $w^{ck_{22}} = w^{ck_{22}k_{33}}$. Because $w \neq 1$ (for $1<|c-d|$), it follows that
\[
ck_{22}(1-k_{33})\equiv0\pmod{c-d}.
\] Finally, suppose by contradiction that $k_{33}=-1$. Then, $2ck_{22}\equiv0 \pmod{c-d}$. Since $c-d$ is odd and $\gcd(c,c-d)=1$, the numbers $2$ and $c$ are invertible modulo $c-d$. Hence, $k_{22}\equiv0\pmod{c-d}$, which implies $k_{22}=0$. This contradicts the fact that the automorphism $\overline{\varphi}$ maps $y$ to the element $y^{k_{22}}$, which must be nontrivial. It follows that $k_{33}=1$, as desired.
\end{proof}

\begin{theorem}\label{notdivideb}
If $G=\textnormal{loll}(a,b,c,d)$ satisfies $\gcd(a,c-d)=1=\gcd(a,b)$, $|c-d|>\gcd(c,d)$ and $c-d$ does not divide $b$, then the $R_{\infty}$-nilpotency degree of $G$ is $2$.
\end{theorem}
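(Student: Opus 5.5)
We follow the strategy of Theorem \ref{pirulitosindice2}. By Corollary \ref{atleast2} it suffices to show that $Q=G/\gamma_3$ has $R_\infty$. Since $\gcd(a,b)=1$, we have $n=\gcd(a,b,c-d)=1$, so $u=1$ in the presentation of Step 2. Lemma \ref{normalformlemma} then gives the normal form $x^{k_1}y^{k_2}t^{k_3}v^{k_5}w^{k_6}$, with bounded exponents on $x,y,v,w$. As before, this yields the following facts:
\begin{itemize}
\item $\tau(Q)=\langle x,y,v,w\rangle$ is abelian;
\item $\gamma_2(Q)=\langle v,w\rangle$;
\item both subgroups are characteristic;
\item any $\varphi\in Aut(Q)$ has the shape (\ref{phi.1})--(\ref{phi.5}), with $k_{33}=\pm1$.
\end{itemize}
Exactly as in that proof, showing $k_{33}=1$ forces $R(\overline{\overline{\varphi}})=\infty$ on $A/\tau(A)\simeq\mathbb{Z}$, hence $R(\varphi)=\infty$ by Lemma \ref{cavelemma}. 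So the whole task is to rule out $k_{33}=-1$.

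We apply $\varphi$ to the relation $y^{c-d}=w^c$ exactly as in the derivation of Equation (\ref{equation1}). Since $\gcd(a,c-d)=1$, uniqueness of the normal form gives $k_{21}=0$ and then $k_{24}=0$. Here $x\neq1$ because $a>1$, and $v\neq 1$ because its relative order is $a>1$. We are left with $ck_{22}(1-k_{33})\equiv 0\pmod{c-d}$. This was previously killed using that $c-d$ is odd and $\gcd(c,c-d)=1$, neither of which we now have. So we need a second constraint, and the natural source is the other relation $x^a=y^b$, where the hypotheses $\gcd(a,b)=1$ and $c-d\nmid b$ should enter.

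We apply $\varphi$ to $x^a=y^b$. In the abelian group $\tau(Q)$ this gives
\[
x^{ak_{11}}y^{ak_{12}}v^{ak_{14}}w^{ak_{15}}=y^{bk_{22}}w^{bk_{25}}.
\]
We then reduce using $x^a=y^b$, $v^a=w^b$, $y^{c-d}=w^c$ and $w^{c-d}=1$, and compare normal forms. The $y$-component gives $b k_{11}+ak_{12}\equiv bk_{22}\pmod{c-d}$. We also apply $\varphi$ to $v=[x,t]$: this gives $\varphi(v)=v^{k_{11}k_{33}}w^{k_{12}k_{33}}$, and similarly $\varphi(w)=w^{k_{22}k_{33}}$. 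Compatibility with $v^a=w^b$ then gives
\[
v^{ak_{11}k_{33}}w^{ak_{12}k_{33}}=w^{bk_{22}k_{33}}.
\]
Rewriting $v^{ak_{11}}=w^{bk_{11}}$, this becomes the same congruence multiplied by $k_{33}$, so it adds nothing new. More useful is $\varphi(w)^{c-d}=1$ combined with $\varphi(y)^{c-d}=\varphi(w)^c$. The left side is $y^{(c-d)k_{22}}=w^{ck_{22}}$, while the right side is $w^{ck_{22}k_{33}}$. This again only recovers the earlier congruence.

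Hence the key step, and the main obstacle, is extracting a contradiction from $k_{33}=-1$, that is from $2ck_{22}\equiv0\pmod{c-d}$, together with the fact that $y\mapsto y^{k_{22}}$ must induce an automorphism of $\tau(A)=\langle x,y\mid x^a=y^b,\ y^{c-d}=1\rangle$ and that the $x$-data are compatible with $x^a=y^b$. The first thing I would try is a torsion-order argument in the finite abelian group $\tau(A)$, presented by the matrix $\begin{pmatrix}a&-b\\0&c-d\end{pmatrix}$. Because $\gcd(a,b)=1$, $\tau(A)$ is cyclic of order $a|c-d|$, generated by a suitable combination of $x$ and $y$. The relation $x^a=y^b$ and the hypothesis $c-d\nmid b$ make $\langle y\rangle$ a proper subgroup of controlled index, and $\overline{\varphi}$ must preserve this cyclic structure. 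Write $g=\gcd(c,c-d)=\gcd(c,d)$. Then $2ck_{22}\equiv0$ says $k_{22}$ is a multiple of $|c-d|/\gcd(2g,|c-d|)$, so $y^{k_{22}}$ has order dividing $2g$. Meanwhile $\overline{\varphi}(y)$ must have the same order as $y$, namely $|c-d|$ in $\tau(A)$. So $|c-d|$ divides $2g$; together with $|c-d|>g$ and $g\mid c-d$, this forces $|c-d|=2g$. In that remaining case we must use $c-d\nmid b$ (and $\gcd(a,c-d)=1$) to get a contradiction. I would do this by examining the $x$-coordinate, namely the congruence $bk_{11}+ak_{12}\equiv bk_{22}$ with $k_{22}=g$ odd multiple, together with the requirement that $\varphi(x)^a=\varphi(y)^b$ with $b$ not divisible by $2g$. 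I expect this final parity/divisibility bookkeeping to be the delicate part.
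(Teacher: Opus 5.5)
Everything up to your last step is correct, and it is a cleaner route than the paper's. From $y^{c-d}=w^c$ you get $k_{21}=k_{24}=0$ and $2ck_{22}\equiv 0 \pmod{c-d}$. Since $\overline{\varphi}(y)=y^{k_{22}}$ must have the same order $|c-d|$ as $y$ in $\tau(A)$, we get $\gcd(k_{22},c-d)=1$. Hence $|c-d|$ divides $2c$, and therefore $2\gcd(c,d)$, which leaves only the case $|c-d|=2\gcd(c,d)$.

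The gap is that you do not close this case, and it cannot be closed, because the statement is false there. Whenever $|c-d|$ divides $2c$, the assignment $x\mapsto x$, $y\mapsto y$, $t\mapsto t^{-1}$ induces an automorphism of $Q=G/\gamma_3$:
\begin{itemize}
\item it sends $v\mapsto v^{-1}$ and $w\mapsto w^{-1}$, and it preserves $x^a=y^b$;
\item it sends $ty^ct^{-1}y^{-d}$ to $t^{-1}y^cty^{-d}=w^cy^{c-d}=w^{2c}=1$;
\item it is an involution.
\end{itemize}
It induces $-\id$ on $Q/\tau(Q)\simeq\mathbb{Z}$, and $\tau(Q)$ is finite, so $R(\varphi)\le 2|\tau(Q)|<\infty$.

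Concretely, $G=\textnormal{loll}(3,1,1,3)\simeq BS(3,9)$ satisfies every hypothesis: $\gcd(3,-2)=1=\gcd(3,1)$, $2>1$, and $-2\nmid 1$. Yet $G/\gamma_3(G)$ does not have $R_\infty$, so the degree of $G$ is at least $3$; this also contradicts Theorem \ref{refinement}. The group $\textnormal{loll}(3,5,1,3)$ is a similar example with $b\neq 1$. What your argument does prove is the corrected statement: drop $c-d\nmid b$ and strengthen $|c-d|>\gcd(c,d)$ to $|c-d|>2\gcd(c,d)$.

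The paper's proof disposes of this case only through a normal-form slip. From $\varphi(v)=v^{-k_{11}}w^{-k_{12}}$ it reads off $k_{45}\equiv -k_{12}\pmod{c-d}$, forgetting the carry $v^a=w^b$. When $k_{11}\neq 0$, the normal form is $v^{a-k_{11}}w^{-k_{12}-b}$, so in fact $k_{45}\equiv -k_{12}-b$. With this correction, the paper's chain of congruences yields $b(k_{11}+k_{44})\equiv ab \pmod{c-d}$. That is vacuous precisely in the case $k_{11}+k_{44}=a$, which is where the paper invokes $c-d\nmid b$. In the example above, the automorphism has $k_{11}=1$, $k_{44}=2$ and $k_{45}=1$, whereas $-k_{12}=0$. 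So your observation that compatibility with $v^a=w^b$ ``adds nothing new'' is exactly right, and the hypothesis $c-d\nmid b$ cannot be made to do any work.
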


\begin{proof} We proceed similarly to the proof of Theorem \ref{pirulitosindice2}. By Corollary \ref{atleast2}, it suffices to show that $G/\gamma_3$ possesses property $R_\infty$. We have $u=1$ as well, and if $\varphi \in Aut(G/\gamma_3)$, it can be described by equations (\ref{phi.1})--(\ref{phi.5}). We will show that $k_{33}= 1$, which will complete the proof.

Suppose by contradiction that $k_{33}=-1$. From relation $v=[x,t]$ in $G/\gamma_3$, we obtain:

\begin{align*}
    \varphi(v) & =[\varphi(x),\varphi(t)] \\
    v^{k_{44}}w^{k_{45}} & =[x^{k_{11}}y^{k_{12}},x^{k_{31}}y^{k_{32}}t^{-1}] \\
    v^{k_{44}}w^{k_{45}} & =v^{-k_{11}}w^{-k_{12}}.
\end{align*}
Thus, $k_{44}\equiv -k_{11} \pmod a$ and $k_{45} \equiv -k_{12} \pmod{c-d}$. In a similar way, from relation $w=[y,t]$, we obtain
$k_{55} \equiv -k_{22} \pmod{c-d}$. We now claim that $ak_{12}+b(k_{11}-k_{22}) \equiv 0 \pmod{c-d}$. Indeed, from relation $x^a=y^b$ and the fact that $\tau(G/\gamma_3)=\langle x,y,v,w \rangle$ is abelian, we obtain
\begin{align*}
    \varphi(x)^a & =\varphi(y)^b \\
    (x^{k_{11}}y^{k_{12}}v^{k_{14}}w^{k_{15}})^a & = (x^{k_{21}}y^{k_{22}}v^{k_{24}}w^{k_{25}})^b\\
    x^{ak_{11}}y^{ak_{12}}v^{ak_{14}}w^{ak_{15}} & = x^{bk_{21}}y^{bk_{22}}v^{bk_{24}}w^{bk_{25}}\\
    y^{ak_{12}+bk_{11}}w^{ak_{15}+bk_{14}} & = x^{bk_{21}}y^{bk_{22}}v^{bk_{24}}w^{bk_{25}}.
\end{align*}By uniqueness of normal form, we have $bk_{21} \equiv 0 \pmod a$. Since $\gcd(a,b)=1$, we get $k_{21} \equiv 0 \pmod a$ and then $k_{21}=0$, since $0 \leq k_{21}<a$. By manipulating the above equation we then obtain
\[
y^{ak_{12}+b(k_{11}-k_{22})}= v^{bk_{24}}w^{b(k_{25}-k_{14})-ak_{15}},
\] thus $ak_{12}+b(k_{11}-k_{22}) \equiv 0 \pmod{c-d}$, again by uniqueness of normal form. Applying a similar argument to relation $v^a=w^b$, we obtain $ak_{45}+b(k_{44}-k_{55}) \equiv 0 \pmod{c-d}$. Using all of the equations obtained so far, we get:
\begin{align*}
     -ak_{45}& \equiv ak_{12} \pmod{c-d}\\
    bk_{44}-bk_{55}& \equiv ak_{12} \pmod{c-d}\\
    bk_{44}+bk_{22}& \equiv ak_{12}\pmod{c-d}\\
    bk_{44}+(ak_{12}+bk_{11})& \equiv ak_{12} \pmod{c-d}\\
    b(k_{11}+k_{44})& \equiv 0 \pmod{c-d}.
\end{align*} Since $k_{44}+k_{11}=0 \pmod a$ and $0 \leq k_{44}+k_{11}<2a$, we must have $k_{11}+k_{44}\in \{0,a\}$. We will come to a contradiction in both cases.

If $k_{11}+k_{44}=0$, it means that $k_{11}=k_{44}=0$, and since $k_{21}=0$ (for the same reason as in the previous theorem), the induced automorphism on $\tau(G/\gamma_2)$ preserves $\langle y \rangle$. In this way, $\varphi$ induces an automorphism on $\tau(G/\gamma_2)/\langle y \rangle \simeq \langle x \rangle \simeq \mathbb{Z}_{a}$, given by $\varphi(x)=x^{k_{11}}$, but since $k_{11}=0$, we have a contradiction.

If $k_{11}+k_{44}=a$, this means that $ba=b(k_{11}+k_{44}) \equiv 0 \pmod{c-d}$. Since $\gcd(a,c-d)=1$, it follows that $c-d$ divides $b$, a contradiction.

Therefore, we must have $k_{33}=1$ and $R(\varphi)=\infty$, as desired.
\end{proof}

The previous result allows us to refine Theorem \ref{DaciDekimpe}. In the case $n-m=2\gcd(m,n)$ (the fifth of six cases), there is not an exact computation of the $R_{\infty}$-nilpotency degree $r$ of $BS(m,n)$, but only an upper bound $2 \leq r \leq p+2$. Below, we compute $r$ in three quarters of this case, assuming only that $m,n$ are not both even. So, we may call it a ``12.5\%'' refinement.

\begin{theorem}[A 12.5\% refinement of Theorem \ref{DaciDekimpe}]\label{refinement}
Let $0<m\leq |n|$ with $m\neq n$ and suppose $d=\gcd(m,n)>1$. If $n-m=2d$ and $m,n$ are not both even, then the $R_{\infty}$-nilpotency degree of $BS(m,n)$ is $2$.
\end{theorem}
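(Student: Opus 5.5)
The plan is to realize $BS(m,n)$ as a lollipop GBS group with $b=1$, and then apply Theorem \ref{notdivideb}. To avoid a clash with the lollipop parameter $d$, I write $D=\gcd(m,n)$ for the integer called $d$ in the statement. I also set $c'=m/D$ and $d'=n/D$. Then $\gcd(c',d')=1$ and $0<c'\leq |d'|$, so these parameters meet the paper's standing normalization.

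\emph{Step 1: the identification.} I claim that $\textnormal{loll}(D,1,c',d')\simeq BS(m,n)$. The lollipop group is
\[
\langle x,y,t \mid x^{D}=y,\ ty^{c'}t^{-1}=y^{d'}\rangle .
\]
A Tietze transformation eliminates $y=x^{D}$ and leaves
\[
\langle x,t \mid tx^{Dc'}t^{-1}=x^{Dd'}\rangle=\langle x,t\mid tx^mt^{-1}=x^n\rangle=BS(m,n).
\]
Since $D>1$, this lollipop has $a=D>1$, as the paper requires throughout.

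\emph{Step 2: checking the hypotheses of Theorem \ref{notdivideb}.} From $n-m=2D$ we get $d'-c'=2$, so $|c'-d'|=2>1=\gcd(c',d')$. We also have $\gcd(a,b)=\gcd(D,1)=1$, and $c'-d'=-2$ does not divide $b=1$. The only remaining condition is $\gcd(a,c'-d')=\gcd(D,2)=1$, that is, $D$ must be odd.

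\emph{Step 3: the parity argument.} Here I translate ``$m,n$ not both even'' into ``$D$ odd''. Because $d'-c'=2$, the integers $c'$ and $d'$ have the same parity. They are coprime, so they cannot both be even, hence both are odd. Consequently $m=Dc'$ and $n=Dd'$ have the same parity as $D$. So $m,n$ are not both even exactly when $D$ is odd. Theorem \ref{notdivideb} then gives $R_\infty$-nilpotency degree $2$ for $\textnormal{loll}(D,1,c',d')\simeq BS(m,n)$.

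I expect no step to be a serious obstacle. The only points needing care are the Tietze elimination in Step 1 and the parity bookkeeping in Step 3.
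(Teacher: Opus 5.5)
Your reduction is exactly the paper's proof: identify $BS(m,n)$ with $\textnormal{loll}(D,1,m/D,n/D)$ and invoke Theorem \ref{notdivideb}. Your hypothesis checks are correct, including the parity argument that ``not both even'' means $D$ odd. The problem is that the conclusion is false, so the appeal to Theorem \ref{notdivideb} cannot be sound here.

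Here is a direct check. In $Q=BS(m,n)/\gamma_3$ put $w=[x,t]$, which is central. Then $txt^{-1}=w^{-1}x$ and $t^{-1}xt=wx$. The defining relation becomes $x^{n-m}=w^{-m}$, and conjugating this central element by $t$ gives $w^{n-m}=1$. The assignment $x\mapsto x$, $t\mapsto t^{-1}$ sends the relator $tx^mt^{-1}x^{-n}$ to $t^{-1}x^mtx^{-n}=w^mx^{m-n}=w^{2m}$. This is trivial because $n-m=2D$ divides $2m$, so the assignment induces an involutive automorphism $\varphi$ of $Q$. Now $\tau(Q)=\langle x,w\rangle$ is finite, and $Q/\tau(Q)\cong\mathbb{Z}$ is generated by $t$, on which $\varphi$ induces $-\id$, with $2$ classes. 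Every $\varphi$-class lying over a given $\overline{\varphi}$-class $[c]$ meets the coset $p^{-1}(c)$ of $\tau(Q)$. Hence $R(\varphi)\le 2|\tau(Q)|<\infty$. Thus $Q$ never has $R_\infty$ when $n-m=2D$, and the degree is at least $3$. For instance, $BS(3,9)$ satisfies every hypothesis of the statement and is a concrete counterexample.

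The error lies in the proof of Theorem \ref{notdivideb}, so no extra care in your Steps 1--3 can repair the argument. For $\textnormal{loll}(3,1,1,3)\cong BS(3,9)$, the automorphism above is $x\mapsto x$, $y\mapsto y$, $t\mapsto t^{-1}$. So $k_{11}=1$, $k_{12}=0$ and $k_{33}=-1$. In normal form $\varphi(v)=v^{-1}=v^{2}w$, because $v^3=w$ and $w^2=1$, so $k_{45}=1$.

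That proof asserts $k_{45}\equiv -k_{12}\pmod{c-d}$. This overlooks that rewriting $v^{-k_{11}}$ in normal form via $v^a=w^b$ produces an extra factor $w^{-b}$ whenever $k_{11}\neq 0$. With the correct congruence $k_{45}\equiv -k_{12}-b$, the case $k_{11}+k_{44}=a$ only yields the tautology $ab\equiv ab\pmod{c-d}$, and no contradiction arises. So the statement as written should not be expected to have a proof. What does hold in this case is the lower bound $r\geq 3$, alongside the upper bound $p+2$ from Theorem 1.
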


\begin{proof}
Let $G=\textnormal{loll}(d,1,\frac{m}{d},\frac{n}{d})$. Note that $G \simeq BS(m,n)$, via the Tietze transformation \cite[p.~89]{LyndonSchupp} of replacing $y$ by $x^d$. Now, one can see that $G$ satisfies the requirements of Theorem \ref{notdivideb}. Therefore, the $R_{\infty}$-nilpotency degree of $BS(m,n)$ is $2$ as desired.
\end{proof}

\section{Sylow subgroups and lollipop GBS groups}\label{Sylow}

In this section, we use well-known facts about Sylow subgroups and their relation with finite nilpotent groups to compute the $R_\infty$-nilpotency degree of some previously untreated cases of lollipop GBS groups. We also use the normal forms from Section \ref{normalformsection}. Let us begin by recalling some well-known facts about nilpotent groups and Sylow subgroups (see e.g. \cite[Chapter II]{Hungerford}). 

\begin{lemma}\label{lema1}
A finite group $G$ is nilpotent if and only if all of its Sylow subgroups are normal. Equivalently, $G$ is nilpotent if and only if $G$ is the direct product of its Sylow subgroups.
\end{lemma}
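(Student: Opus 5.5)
The statement is the classical characterization of finite nilpotent groups, so I would give the standard textbook argument from \cite[Chapter II]{Hungerford}. It has three implications: nilpotent $\Rightarrow$ every Sylow subgroup is normal $\Rightarrow$ $G$ is the direct product of its Sylow subgroups $\Rightarrow$ nilpotent. The first is the only one with real content.

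\emph{Nilpotent implies Sylow subgroups normal.} First I will show that in a nilpotent group $G$ every proper subgroup $H$ satisfies $H \subsetneq N_G(H)$ (normalizers grow). To see this, take an upper central series $1=Z_0\le Z_1\le\cdots\le Z_k=G$, where $Z_{i+1}/Z_i=Z(G/Z_i)$. Choose the largest $i$ with $Z_i\le H$; then $i<k$, since $H$ is proper. Now $[Z_{i+1},H]\le Z_i\le H$, so $Z_{i+1}$ normalizes $H$, and $Z_{i+1}\not\le H$ by the choice of $i$. Hence $H\subsetneq N_G(H)$.

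Next I apply this to a Sylow $p$-subgroup $P$ and the subgroup $N=N_G(P)$. The standard Frattini-type argument gives $N_G(N)=N$: if $g$ normalizes $N$, then $gPg^{-1}$ is a Sylow $p$-subgroup of $N$. Since $P\lhd N$, it is the unique Sylow $p$-subgroup of $N$, so $gPg^{-1}=P$ and $g\in N$. Because normalizers grow, a self-normalizing subgroup must be all of $G$. Thus $N_G(P)=G$, and $P\lhd G$. This step (the normalizer growth combined with the self-normalizing property of $N_G(P)$) is the main obstacle; the rest is bookkeeping.

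\emph{Normal Sylow subgroups imply a direct product.} Let $|G|=p_1^{e_1}\cdots p_r^{e_r}$, and let $P_1,\dots,P_r$ be the normal Sylow subgroups. For $i\neq j$, the commutator $[P_i,P_j]$ lies in $P_i\cap P_j=1$ by normality, so elements of distinct $P_i$ commute. By induction, $P_1\cdots P_k$ is a subgroup of order $p_1^{e_1}\cdots p_k^{e_k}$, using coprimality of orders to get $(P_1\cdots P_{k-1})\cap P_k=1$. Hence the multiplication map $P_1\times\cdots\times P_r\to G$ is an injective homomorphism between groups of the same order, and therefore an isomorphism.

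\emph{Direct product implies nilpotent.} Every finite $p$-group is nilpotent: its center is nontrivial by the class equation, and induction applied to $P/Z(P)$ finishes the argument. For the product, I will use $\gamma_k(A\times B)=\gamma_k(A)\times\gamma_k(B)$, which follows from Proposition~\ref{commutmks}-style generation, or directly from the componentwise commutator. Then the lower central series of $P_1\times\cdots\times P_r$ reaches $1$ once $k$ exceeds the maximum of the nilpotency classes of the $P_i$. This closes the cycle of equivalences.
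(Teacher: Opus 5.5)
Your argument is correct. It is the standard textbook proof from \cite[Chapter II]{Hungerford}: normalizers grow in nilpotent groups, $N_G(N_G(P))=N_G(P)$, normal Sylow subgroups commute pairwise and so form a direct product, and $p$-groups and direct products of nilpotent groups are nilpotent. The paper cites exactly this source instead of giving a proof. One cosmetic point: the paper defines nilpotency through the lower central series, so your first step tacitly uses its standard equivalence with the upper central series. You can avoid this by running the same normalizer-growth argument with the largest $i$ such that $\gamma_i(G)\not\le H$.
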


\begin{lemma}\label{lema2}
A normal Sylow subgroup is always a characteristic subgroup.
\end{lemma}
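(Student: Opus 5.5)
The plan is to show that a normal Sylow subgroup is the \emph{only} Sylow $p$-subgroup of $G$, and then observe that automorphisms permute Sylow $p$-subgroups. Let $G$ be a finite group, $p$ a prime, and $P$ a normal Sylow $p$-subgroup of $G$.

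First I would invoke Sylow's second theorem: any two Sylow $p$-subgroups of $G$ are conjugate. So if $Q$ is another Sylow $p$-subgroup, then $Q=gPg^{-1}$ for some $g\in G$. Since $P\lhd G$, this gives $Q=P$, so $P$ is the unique Sylow $p$-subgroup of $G$. Equivalently, $P$ is the unique subgroup of $G$ of order $p^k$, where $p^k$ is the largest power of $p$ dividing $|G|$.

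Next, let $\psi\in Aut(G)$. Since $\psi$ is a bijective homomorphism, $\psi(P)$ is a subgroup of $G$ with $|\psi(P)|=|P|=p^k$, so $\psi(P)$ is again a Sylow $p$-subgroup. By uniqueness, $\psi(P)=P$, and hence $P$ is characteristic in $G$.

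There is no genuine obstacle here. The only point needing care is the uniqueness step, which rests on the conjugacy part of Sylow's theorems (see \cite[Chapter II]{Hungerford}). One could instead argue without conjugacy: if $Q$ is any $p$-subgroup, then since $P\lhd G$, $PQ$ is a subgroup with $|PQ|=|P||Q|/|P\cap Q|$, a power of $p$. Maximality of $|P|$ among $p$-subgroups then forces $Q\subseteq P$. I would present the conjugacy argument, since it is the shortest.
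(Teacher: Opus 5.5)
Your proof is correct, and it is the standard argument. The paper gives no proof of its own and simply quotes the fact from Hungerford (Chapter II), where it is proved as you do: conjugacy makes a normal Sylow $p$-subgroup the unique subgroup of order $p^k$, and automorphisms preserve orders. Your alternative argument via the $p$-group $PQ$ is also valid, and either version covers the paper's use of the lemma for the finite abelian group $\tau(\overline{G})$.
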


Recall that a \emph{Hall divisor} of an integer $n$ is defined as a divisor $m$ of $n$ such that $m$ and $n/m$ are coprime. In other words, $m$ is either $1$ or a product of maximal prime powers in the prime decomposition of $n$. For an integer $k\geq 1$, let $P(k)$ denote the set of prime numbers appearing in the prime decomposition of $k$. In particular, $P(1)=\emptyset$. The next lemma follows directly from these definitions.

\begin{lemma}\label{lemanumerodesylow}
If $m$ is a Hall divisor of $n$ and $k \geq 2$ is such that $P(k)\cap P(n) \subset P(m)$, then $km$ is a Hall divisor of $kn$.
\end{lemma}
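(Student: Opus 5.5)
We argue prime by prime, using the definition of Hall divisor: $m$ is a Hall divisor of $n$ exactly when, for every prime $p$, either $v_p(m)=0$ or $v_p(m)=v_p(n)$, where $v_p$ denotes the $p$-adic valuation. Since $m\mid n$, we have $v_p(m)\le v_p(n)$ for every $p$.

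It suffices to show that for each $p\in P(km)$, the valuation $v_p(km)$ equals $v_p(kn)$. Then $km$ divides $kn$, and $kn/km=n/m$ is coprime to $km$. The last point holds because any common prime factor $p$ would give $v_p(km)=v_p(kn)$ together with $v_p(kn/km)>0$, which is impossible.

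Take $p\in P(km)=P(k)\cup P(m)$. There are two cases.

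\emph{Case $p\in P(m)$.} Since $m$ is a Hall divisor of $n$, we have $v_p(m)=v_p(n)$. Therefore
\[
v_p(km)=v_p(k)+v_p(m)=v_p(k)+v_p(n)=v_p(kn).
\]

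\emph{Case $p\in P(k)\setminus P(m)$.} The hypothesis $P(k)\cap P(n)\subset P(m)$ forces $p\notin P(n)$. So $v_p(n)=0=v_p(m)$, and hence $v_p(km)=v_p(k)=v_p(kn)$.

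This covers all primes in $P(km)$, so $km$ is a Hall divisor of $kn$. The argument is purely arithmetic and has no real obstacle. The only point needing care is the case $m=1$: the hypothesis then says $P(k)\cap P(n)=\emptyset$, and the claim reduces to $\gcd(k,n)=1$, which the second case covers.
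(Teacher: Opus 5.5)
Your argument is correct: checking $v_p(km)=v_p(kn)$ for each $p\in P(km)$, split according to whether $p\in P(m)$ or $p\in P(k)\setminus P(m)$, is the direct unpacking of the definitions that the paper intends when it says the lemma ``follows directly from these definitions'' (the paper gives no further proof). Two small remarks: $km\mid kn$ already follows from $m\mid n$, and the hypothesis $k\geq 2$ plays no role in your argument.
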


\begin{theorem}\label{Sylow1}
Let $G=\textnormal{loll}(a,b,c,d)$. Suppose that $n=\gcd(a,b,c-d)=1$, that $m=\gcd(c,d)$ is a Hall divisor of $|c-d|$ and that $P(a)\cap P(|c-d|) \subset P(m)$. Then, the $R_{\infty}$-nilpotency degree of $G$ is $2$ in any of the following cases:
\begin{itemize}
    \item $d<0$ and $d \neq -c$;
    \item $d>0$ and $d-c \geq 3m$.
\end{itemize}
\end{theorem}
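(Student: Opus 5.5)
By Corollary \ref{atleast2} it suffices to show that $Q=G/\gamma_3(G)$ has property $R_\infty$. Since $n=1$, the presentation of ``Step 2'' gives $u=1$. Lemma \ref{normalformlemma} then gives the normal form $x^{k_1}y^{k_2}t^{k_3}v^{k_5}w^{k_6}$. Exactly as in the proof of Theorem \ref{pirulitosindice2}, this yields three facts:
\begin{itemize}
\item $\tau(Q)=\langle x,y,v,w\rangle$ is a finite abelian characteristic subgroup;
\item $\gamma_2(Q)=\langle v,w\rangle$;
\item every $\varphi\in Aut(Q)$ has the shape (\ref{phi.1})--(\ref{phi.5}) with $k_{33}=\pm1$.
\end{itemize}
Again as in that proof, $k_{33}=1$ forces $R(\varphi)=\infty$ via Lemma \ref{cavelemma}. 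So the plan is to rule out $k_{33}=-1$. I will do this by passing to a well-chosen Sylow-type quotient and reducing to the Baumslag--Solitar computation of Theorem \ref{DaciDekimpe}.

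The first step is to pick out the relevant torsion piece. The finite nilpotent group $\tau(Q)$ has order dividing $a\cdot|c-d|\cdot a\cdot|c-d|$. By Lemma \ref{lema1} it is the direct product of its Sylow subgroups, and by Lemma \ref{lema2} each Sylow subgroup is characteristic in $\tau(Q)$, hence in $Q$. Write $|c-d|=m\,m'$ with $\gcd(m,m')=1$; this is possible since $m$ is a Hall divisor. The hypothesis $P(a)\cap P(|c-d|)\subset P(m)$ and Lemma \ref{lemanumerodesylow} are designed so that the primes of $m'$ do not divide $a$. Let $S$ be the product of the Sylow subgroups of $\tau(Q)$ for the primes in $P(a)\cup P(m)$. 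Then $S$ is characteristic, and $\tau(Q)/S$ is the $m'$-part.

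The second step is to identify this $m'$-part. I expect it to be generated by the images of $y$ and $w$. In the quotient, $x$ and $v$ die, because their relative orders are $a$ and $a$, which are coprime to $m'$. The power relations reduce to $y^{c-d}=w^{c}$, $w^{c-d}=1$, and the relations coming from $x^a=y^b$ and $v^a=w^b$, with $a$ invertible mod $m'$.

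The third step is to compare with a Baumslag--Solitar quotient. Set $c'=c/m$ and $d'=d/m$. I expect $Q/S$ to be isomorphic to $\overline{B}=B/\gamma_3(B)$ modulo its analogous characteristic subgroup, where $B=BS(c',d')$ or $BS(c,d)$. Then $\varphi$ induces an automorphism of this BS-type quotient, and $t\mapsto t^{-1}$ modulo torsion survives there. In the case $d<0$, $d\neq-c$, and in the case $d-c\ge 3m$, Theorem \ref{DaciDekimpe} says $BS(c,d)/\gamma_3$ has $R_\infty$, and its proof forces the $t$-exponent to be $1$. To avoid relying on the internals of that proof, I will instead redo the computation directly. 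Applying $\varphi$ to $y^{c-d}=w^c$ and $w=[y,t]$ in $\tau(Q)/S$ gives, as in Equation (\ref{equation1}), $c\,k_{22}(1-k_{33})\equiv 0 \pmod{m'}$ after the $x$-terms vanish. Since $\gcd(c,m')=1$, taking $k_{33}=-1$ gives $2k_{22}\equiv0\pmod{m'}$, while $k_{22}$ must be a unit mod $m'$. This is a contradiction whenever $m'$ is not $1$ or $2$.

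The main obstacle is exactly this last condition, $m'\notin\{1,2\}$. Here $m'=|c-d|/m$, so the hypotheses $d<0,\ d\neq -c$ (giving $|c-d|/m=|c'|+|d'|\ge 3$, since $d\ne -c$ and $c\le|d|$) and $d-c\ge 3m$ are precisely what guarantee $m'\ge3$. If $m'$ is odd, the contradiction is immediate. If $m'$ is even with $m'\geq 4$, I would also exploit the relation coming from $v=[x,t]$ together with $w$-exponent congruences, as in Theorem \ref{notdivideb}, to push the congruence to mod $m'$ rather than mod $m'/2$. The delicate points are verifying that the $w$-component really has order $m'$ in $\tau(Q)/S$, and handling the $2$-part when $m'$ is even.
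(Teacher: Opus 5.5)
Your strategy is sound and, once one step is repaired, gives a complete proof, but it is not the paper's route. The paper never analyses $\mathrm{Aut}(Q)$ directly. Instead it:
\begin{enumerate}
\item builds the epimorphism $G\to BS(c',d')$ given by $x\mapsto z^{b'}$, $y\mapsto z^{a'}$, $t\mapsto s$, where $a'=a/\gcd(a,b)$ and $b'=b/\gcd(a,b)$;
\item checks that the induced map $Q\to BS(c',d')/\gamma_3$ is onto, with kernel inside $\tau(Q)$ of order $a^2m^2$;
\item uses the Hall hypothesis, $P(a)\cap P(|c-d|)\subset P(m)$ and Lemma~\ref{lemanumerodesylow} to see that this kernel is a product of Sylow subgroups of $\tau(Q)$, hence characteristic;
\item quotes Theorem~\ref{DaciDekimpe} for $BS(c',d')$, where $|c'-d'|=m'\geq 3$, together with Corollary~\ref{characteristic}.
\end{enumerate}
That kernel is exactly your $S$, so in fact $Q/S\cong BS(c',d')/\gamma_3(BS(c',d'))$, which confirms the expectation in your third step. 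You instead redo the $\gamma_3$-level computation inside $Q/S$. The paper's route is shorter and explains why the case split mirrors Theorem~\ref{DaciDekimpe}. Yours is self-contained, using only Lemma~\ref{normalformlemma} and the Sylow decomposition. It also yields the explicit fact that every automorphism of $Q$ induces the identity on $Q/\tau(Q)\cong\mathbb{Z}$. It never needs the identification of $Q/S$ with a Baumslag--Solitar quotient, only that $\bar w$ has order $m'\geq 3$ with $\gcd(c,m')=1$. The latter holds because $\gcd(m,m')=1$ and $\gcd(c',m')=\gcd(c',d')=1$.

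The step to repair is the claim that $x$ and $v$ die in $Q/S$. Relative order $a$ means $x^a=y^b$, not $x^a=1$. Since $\tau(Q)/S$ has order $m'^2$ and $\gcd(a,m')=1$, you get $\bar x=\bar y^{\beta}$ and $\bar v=\bar w^{\beta}$ with $a\beta\equiv b\pmod{m'^2}$. These are typically nontrivial: for $\textnormal{loll}(2,1,1,-2)$, $\bar x$ generates $\tau(Q/S)\cong\mathbb{Z}_9$. The $v$-terms are harmless, but the vanishing of the $x$-term of $\varphi(y)$ needs another reason. Here is one. Since $n=1$, $\tau(G/\gamma_2)\cong\mathbb{Z}^2/\langle(a,-b),(0,c-d)\rangle$ is cyclic of order $a|c-d|$. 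So $\langle y\rangle$ is its unique subgroup of order $|c-d|$, hence characteristic. This forces $k_{21}=0$ and makes $k_{22}$ a unit modulo $|c-d|$, in particular modulo $m'$. (Alternatively, keep $K=k_{22}+\beta k_{21}$ throughout. $K$ is a unit modulo $m'$ because $\tau(Q/S)/\gamma_2(Q/S)$ is cyclic of order $m'$, generated by $\bar y$.)

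With this, apply $\bar\varphi$ to $\bar y^{c-d}=\bar w^{c}$ and $\bar w=[\bar y,\bar t]$. This must be done in $Q/S$, not in $\tau(Q)/S$, since $t$ is involved. It gives $\bar w^{ck_{22}(1-k_{33})}=1$. Moreover $\bar w$ has order exactly $m'$, being the $P(m')$-component of an element of order $|c-d|=mm'$.

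Finally, your worry about even $m'$ is unfounded. The congruence $2k_{22}\equiv 0\pmod{m'}$ with $k_{22}$ a unit already gives $m'\mid 2$, contradicting $m'\geq 3$. No further use of $v=[x,t]$ is needed.
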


\begin{proof}
By Corollary \ref{atleast2}, it suffices to show that $G/\gamma_3(G)$ possesses property $R_\infty$. Let $a'=a/\gcd(a,b)$, $b'=b/\gcd(a,b)$, $c'=c/\gcd(c,d)$ and $d'=d/\gcd(c,d)$. Consider the presentation $BS=BS(c',d')=\left<z,s\ |\ sz^{c'}s^{-1}=z^{d'} \right>$. It is straightforward to verify that $\varphi \colon G \to BS(c',d')$ given by $\varphi(x)=z^{b'}$, $\varphi(y)=z^{a'}$ and $\varphi(t)=s$ is a well-defined homomorphism. Since $\gcd(a',b')=1$, it follows that $\varphi$ is surjective. Write $\overline{G}=G/\gamma_3(G)$ and \[
\overline{BS}=BS(c',d')/\gamma_3(BS(c',d')),\] and denote by $\varphi\colon \overline{G} \to \overline{BS}$ the induced epimorphism. 




We consider the presentation of $\overline{G}$ as in Section \ref{normalformsection} and, using a similar argument, we construct a presentation for $\overline{BS}$. Let $F = F(z,s)$ be the free group on two generators $z$ and $s$. Let $\overline{F} = F/\gamma_3(F)$, $f\colon F \to \overline{F}$ be the natural epimorphism, and set $R = \{\, sz^{c'}s^{-1}z^{-d'} \,\}$. By the same argument used in Step~1 of Section \ref{normalformsection}, it follows that
\[
\overline{BS} \cong \overline{F}/\langle\!\langle f(R)\rangle\!\rangle.
\]

Following Step~2, a presentation for $\overline{F}$ is
\[
\overline{F} = \langle z,s,r \mid r = [z,s],\ [r,z] = 1,\ [r,s] = 1 \rangle.
\]
Moreover,
\[
f(sz^{c'}s^{-1}z^{-d'}) = sz^{c'}s^{-1}z^{-d'} = z^{c'-d'}r^{-c'},
\]
and therefore
\[
\overline{F}/\langle\!\langle f(R)\rangle\!\rangle
= \langle z,s,r \mid r = [z,s],\ [r,z] = 1,\ [r,s] = 1,\ z^{c'-d'}r^{-c'} = 1 \rangle.
\]

Although this presentation is correct, we may simplify it via a Tietze transformation by adding a relation that is a consequence of the existing ones. Indeed,
\[
r^{c'-d'} = [z,s]^{c'-d'} = [z^{c'-d'},s] = [r^{c'},s] = 1.
\]
Hence, we obtain the presentation
\[
\overline{BS} = \langle z,s,r \mid r = [z,s],\ [r,z] = 1,\ [r,s] = 1,\ z^{c'-d'} = r^{c'},\ r^{c'-d'} = 1 \rangle.
\] By following Step 3 of Section \ref{normalformsection}, we provide next a sketch to show uniqueness of the following normal form for elements in $\overline{BS}$: for every $g \in \overline{BS}$, there are unique integers $k_i$, $1 \leq i \leq 3$, such that $k_1,k_3 \in [0,|c'-d'|)$ and
\[
g=z^{k_1}s^{k_2}r^{k_3}.
\] In fact, the relative order of $s$ is infinite, since $s$ has infinite order in the abelianized group $BS/\gamma_2(BS)$, which is a quotient of $\overline{BS}$. Let $U=\{z^{c'-d'}r^{-c'},r^{c'-d'}\}\leq \overline{F}$. Since $r^{c'-d'}=1$ in $\overline{BS}=\overline{F}/\langle\!\langle f(R)\rangle\!\rangle=\overline{F}/\langle\!\langle z^{c'-d'}r^{-c'}\rangle\!\rangle$, it follows that $\langle\!\langle z^{c'-d'}r^{-c'} \rangle\!\rangle \supset \langle U \rangle$. The opposite inclusion holds because the right subgroup is normal in $\overline{F}$, since
\[
(z^{k_1}s^{k_2}r^{k_3})(z^{c'-d'}r^{-c'})(r^{-k_3}s^{-k_2}z^{-k_1})=(z^{c'-d'}r^{-c'})r^{-k_2(c'-d')} \in \langle U \rangle,
\] for any $k_i$, $1 \leq i \leq 3$. The exponent matrix associated to $U$ is
\[
A=
\begin{pmatrix}
c'-d' & 0  & -c'     \\
0 & 0 & c'-d'    
\end{pmatrix},
\]which can be row reduced to
\[
\tilde{A}=
\begin{pmatrix}
|c'-d'| & 0 & -\epsilon c' \bmod{|c'-d'|} \\
0 & 0 & |c'-d'| 
\end{pmatrix},
\] where $\epsilon=\textnormal{sgn}(c'-d')$ is the sign of the nonzero integer $c'-d'$.
Then, by \cite[Ch.~9, Sec.~6]{Sims}, as discussed in Section \ref{normalformsection}, the relative orders of $z$ and $r$ in the group $\overline{BS}=\overline{F}/\langle\!\langle f(R)\rangle\!\rangle$ are the \emph{corner entries} of $\tilde{A}$, which are both $|c'-d'|$. This shows uniqueness of the normal form.

Recall that we proved in Section \ref{normalformsection} that for $u=[x,y]\gamma_3(G)$, $u^a=u^b=u^{d-c}=1$. Since $n=\gcd(a,b,c-d)=1$ in our case, we conclude that $u=1$ in $\overline{G}$. We claim that $\tau(\overline{G})=\left< x,y,v,w \right>$. In fact, since $u=1$, the subgroup $\left<x,y,v,w\right>$ is abelian and each generator has finite order, thus $\left<x,y,v,w\right> \subset \tau(\overline{G})$. Following the notations obtained in Section 4, if 
\[ x^{k_1}y^{k_2}t^{k_3}v^{k_5}w^{k_6}\in \tau(\overline{G}),\]
then $(x^{k_1}y^{k_2}t^{k_3}v^{k_5}w^{k_6})^l=1$ for some $l\geq 1$. By projecting naturally this equation on $G/\gamma_2=\langle x,y \rangle \oplus \langle t\rangle$ we obtain
\[x^{lk_1}y^{lk_2}t^{lk_3}=1.\]
Since $t$ has infinite order in $G/\gamma_2$ and $l \geq 1$ we obtain $k_3=0$.

Therefore, $\tau(\overline{G})=\left< x,y,v,w \right>$, and it is a finite nilpotent (in fact, abelian) group. Thus, by Lemmas \ref{lema1} and \ref{lema2}, $\tau(\overline{G})$ is the direct product of its Sylow subgroups, which are normal and characteristic. Because $\varphi(t)=s$ is of infinite order, it follows that $k_3\neq 0$ implies $x^{k_1}y^{k_2}t^{k_3}v^{k_5}w^{k_6} \not\in \ker(\varphi)$. Hence, $\ker(\varphi) \subset \tau(\overline{G})$ and it is also a finite abelian group. In particular, $\ker(\varphi)=\ker(\varphi)\cap \tau(\overline{G})=\ker(\varphi_\tau)$, where $\varphi_\tau\colon \tau(\overline{G}) \to \tau(\overline{BS})$ is the restricted torsion homomorphism. We claim that $\varphi_{\tau}$ is an epimorphism. Indeed, by following arguments similar to those used to prove that $\tau(\overline{G})=\left< x,y,v,w\right>$, we show that $\tau(\overline{BS})=\left<z,r\right>$. Since $\gcd(a',b')=1$, there exists $\alpha$, $\beta\in \mathbb{Z}$ such that $\alpha a'+\beta b'=1$ Consequently, $\varphi_{\tau}(x^{\beta}y^{\alpha})=z$ and $\varphi_{\tau}(v^{\beta}w^{\alpha})=r$, which implies that $\varphi_{\tau}$ is an epimorphism.

Now we show that $\ker(\varphi)$ is characteristic in $\overline{G}$. Since elements of both groups $\overline{G}$ and $\overline{BS}$ have unique normal forms, it follows that $|\tau(\overline{G})|=a^2|c-d|^2$ and 
$|\tau(\overline{BS})|=|c'-d'|^2$. By using that $|c-d|=m|c'-d'|$, Lagrange's Theorem and the isomorphism $\tau(\overline{BS})\simeq \tau(\overline{G})/\ker(\varphi)$, we obtain
\[
a^2m^2|c'-d'|^2=a^2|c-d|^2=|\tau(\overline{G})|=|\tau(\overline{BS})||\ker(\varphi)|=|c'-d'|^2|\ker(\varphi)|,
\] so $|\ker(\varphi)|=a^2m^2$. Since $m$ is a Hall divisor of $|c-d|$, $m^2$ is clearly a Hall divisor of $|c-d|^2$. By Lemma \ref{lemanumerodesylow}, since $P(a)\cap P(|c-d|) \subset P(m)$ by hypothesis, $a^2m^2=|\ker(\varphi)|$ is a Hall divisor of $a^2|c-d|^2=|\tau(\overline{G})|$. By basic Sylow theory, $\ker(\varphi)$ must be a product of Sylow subgroups of $\tau(\overline{G})$ (the ones related to the prime numbers involved in $a^2m^2$). Since these subgroups are all characteristic, $\ker(\varphi)$ is characteristic in $\tau(\overline{G})$, so it must also be characteristic in $\overline{G}$, since $\tau(\overline{G})$ is characteristic in $\overline{G}$.

Finally, in both cases of the statement, Theorem \ref{DaciDekimpe} ensures that the group $\overline{BS}$ has $R_\infty$. Thus, by Corollary \ref{characteristic}, $\overline{G}$ has $R_\infty$, which implies the $R_{\infty}$-nilpotency degree of $G$ is $2$, as desired.
\end{proof}

\section{2-generated lollipop GBS groups}\label{2generated}

In this section, we compute the $R_\infty$-nilpotency degree of all 2-generated lollipop GBS groups, by reducing the problem to the case of Baumslag--Solitar groups and applying Theorem \ref{DaciDekimpe} and its refinement, Theorem \ref{refinement}.

Levitt \cite[Theorem 3.2]{Levitt1} established that the rank $\textnormal{rk}(G)$ of an arbitrary GBS group $G$ is given by $\beta(\Gamma)+\mu(\Gamma)$, where $\beta(\Gamma)$ denotes the topological rank of the associated graph $\Gamma$ and $\mu(\Gamma)$ is the \emph{plateaunic number} of $\Gamma$.
For $G=\textnormal{loll}(a,b,c,d)$, with $a>1$, we have $\textnormal{rk}(G)=2$ (i.e. $G$ is $2$-generated) if and only if $(i)$ $\gcd(b,c,d)=1$ and $(ii)$ if $p$ is prime and $p|b$, then $p|c$ or $p|d$. 

A relevant question in this context is whether the assumption of $G=\textnormal{loll}(a,b,c,d)$ being 2-generated is too restrictive. For instance, is every 2-generated lollipop GBS group necessarily isomorphic to a Baumslag--Solitar group? We answer this question in the negative, by constructing an infinite family of counterexamples: consider four pairwise distinct prime numbers $p,q,r,s$ and let $G=\textnormal{loll}(a,b,c,d)=\textnormal{loll}(a,pq,pr,qs)$, with $a>1$. $G$ is clearly 2-generated. Moreover, none of the numbers $b$, $c$ and $d$ divides any of the others. Therefore, $G$ is \emph{strongly slide-free} (see \cite{Forester}), implying that the lollipop graph of $\mathbb{Z}$'s is the unique reduced labeled graph representing $G$. If $G$ were isomorphic to a Baumslag--Solitar group, a labeled loop representing $G$ would also exist, yielding a contradiction. Hence, the family of 2-generated lollipop GBS groups is strictly larger than the family of Baumslag--Solitar groups, so studying its $R_\infty$-nilpotency degree is relevant.

Although not every 2-generated lollipop GBS group is a Baumslag--Solitar group, we will show that their nilpotent quotients are isomorphic to those of certain Baumslag--Solitar groups. To establish this, we require the following theorem, which follows from Theorem 2.1 of \cite{MagKarSol}, since $\gamma_n(G)$ is normally generated by all left-normed $n$-fold commutators.

\begin{theorem}\label{quociente}
    Let $G$ be a group with presentation
    $G=\langle x_1,\dots ,x_r \: | \: R \rangle$. Then, for an arbitrary $n\geq 2$,
    \[ \dfrac{G}{\gamma_n(G)}=\langle x_1,\dots ,x_r \: | \: R, \: [w_1,\dots ,w_n] \: \mbox{for all } w_i\in F(x_1,\dots, x_r) \rangle, \]
    where $F(x_1,\dots ,x_r)$ is a free group on $x_1,\dots ,x_r$.
\end{theorem}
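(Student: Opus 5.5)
The plan is to derive the statement directly from the definition of $\gamma_n$ via normal closures, reducing it to the standard von Dyck argument comparing two normal subgroups of the free group $F=F(x_1,\dots,x_r)$.

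Write $G=F/M$ with $M=\langle\!\langle R\rangle\!\rangle$, and let $C_n\lhd F$ be the normal closure of all left-normed commutators $[w_1,\dots,w_n]$ with $w_i\in F$. The presentation on the right-hand side is then $F/(MC_n)$, since the normal closure of $R\cup\{[w_1,\dots,w_n]\}$ is the product $MC_n$ of the two normal closures.

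\textbf{Step 1: $C_n=\gamma_n(F)$.} Clearly $C_n\subseteq\gamma_n(F)$, because each $[w_1,\dots,w_n]$ lies in $\gamma_n(F)$ and $\gamma_n(F)$ is normal. For the reverse inclusion I would argue by induction on $n$ that $\gamma_n(F)$ is normally generated by the left-normed $n$-fold commutators, which is the fact cited from \cite{MagKarSol}. By definition, $\gamma_{n+1}(F)=[\gamma_n(F),F]$ is generated by $[g,h]$ with $g\in\gamma_n(F)$ and $h\in F$. By induction, $g$ is a product of conjugates of $n$-fold commutators and their inverses. Expanding $[g,h]$ with the identities $[xy,z]=x[y,z]x^{-1}[x,z]$ and $[uxu^{-1},z]=u[x,u^{-1}zu]u^{-1}$ writes it as a product of conjugates of elements $[c^{\pm1},h']$, where $c$ is an $n$-fold commutator. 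Finally, $[c^{-1},h']=c^{-1}[c,h']^{-1}c$, so $[c^{-1},h']$ is also a conjugate of the inverse of an $(n+1)$-fold left-normed commutator. This gives $\gamma_{n+1}(F)\subseteq C_{n+1}$.

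\textbf{Step 2: $\gamma_n(G)=\gamma_n(F)M/M$.} Surjective homomorphisms carry lower central series terms onto lower central series terms, by induction on $n$ using $\pi([A,B])=[\pi A,\pi B]$. This is exactly the fact already used in Step~1 of Section~\ref{normalformsection}.

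\textbf{Step 3: conclusion.} By the third Isomorphism Theorem,
\[
G/\gamma_n(G)\simeq F/(\gamma_n(F)M)=F/(C_nM),
\]
which is the displayed presentation.

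The only real work is the induction in Step 1, which I expect to be the main obstacle. The delicate part is handling conjugates and inverses when expanding $[g,h]$ for a product $g$ of conjugates of commutators. Everything else is formal.
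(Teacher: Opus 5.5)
Your proof is correct and takes essentially the same route as the paper, which simply cites Theorem 2.1 of \cite{MagKarSol} (presentations of factor groups) together with the fact that $\gamma_n$ is normally generated by the left-normed $n$-fold commutators. You supply a proof of that normal-generation fact (in $F$, transported to $G$ via Step 2), and you replace the citation with the third Isomorphism Theorem applied to $F/(MC_n)$; your commutator identities and the handling of conjugates and inverses in Step 1 all check out.
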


\begin{lemma} \label{potenciascomutador}
    Let $G=\textnormal{loll}(a,b,c,d)$ be a lollipop GBS group. Then, for every $n\in \mathbb{Z}$, we have $[t,x^{nac}]=[t,x^{ac}]^n$.

    \begin{proof}
    We use the following group identity: $[x,yz]=[x,y][x,z][z,x,y]$, which holds for any elements $x,y,z$ of any group $G$. For $n=0,1$, the result is clear. Suppose that $n\geq 2$ and that the lemma holds for $n-1$. Using the defining relations of $G$, we obtain
    \begin{align*}
    [t,x^{nac}] & =[t,x^{(n-1)ac}x^{ac}] \\
    & =[t,x^{(n-1)ac}][t,x^{ac}][x^{ac},t,x^{(n-1)ac}] \\
    & =[t,x^{ac}]^{n-1}[t,x^{ac}][x^{-a(d-c)},x^{(n-1)ac}] \\
    & =[t,x^{ac}]^n.
    \end{align*} Thus, the lemma is true for any $n\geq 0$. For $n=-1$, the lemma also holds:
    \[[t,x^{-ac}]=x^{-ac}[x^{ac},t]x^{ac}=x^{-ac}x^{-a(d-c)}x^{ac}=x^{-a(d-c)}=[t,x^{ac}]^{-1}.\]
Suppose that $n\leq -2$ and that the lemma holds for $n+1$. Following a similar computation as in the previous case, we obtain
    \begin{align*}
    [t,x^{nac}] & =[t,x^{(n+1)ac}x^{-ac}] \\
    & =[t,x^{(n+1)ac}][t,x^{-ac}][x^{-ac},t,x^{(n+1)ac}] \\
    & =[t,x^{ac}]^{n+1}[t,x^{ac}]^{-1}[x^{a(d-c)},x^{(n+1)ac}] \\
    & =[t,x^{ac}]^n.
    \end{align*} Thus, the lemma holds for any $n \in \mathbb{Z}$.
    \end{proof}
\end{lemma}

\begin{lemma}\label{auxiliar}
    Let $G=\textnormal{loll}(a,b,c,d)$ be a lollipop GBS group. Then, for every $k\geq 0$, $x^{a(d-c)^k}\in \gamma_{k+1}(G)$.

    \begin{proof}
        We use induction on $k$. The case $k=0$ is clear and case $k=1$ follows from the relations on $G$:
        \[
        x^{a(d-c)}=y^{b(d-c)}=y^{bd-bc}=ty^{bc}t^{-1}y^{-bc}=tx^{ac}t^{-1}x^{-ac}=[t,x^{ac}] \in \gamma_2(G).
        \]
        Suppose $k\geq 2$ and the result is valid for $k-1$, that is, $x^{a(d-c)^{k-1}}\in \gamma_k(G)$. Thus, by Lemma \ref{potenciascomutador},
        \[ x^{a(d-c)^k}=(x^{a(d-c)})^{(d-c)^{k-1}}=[t,x^{ac}]^{(d-c)^{k-1}}=[t,x^{a(d-c)^{k-1}c}]\in \gamma_{k+1}(G).\]
    \end{proof}
\end{lemma}

\begin{theorem}\label{isomorfismodosquocientes}
Let $G=\textnormal{loll}(a,b,c,d)$ be a $2$-generated lollipop GBS group. Thus, for every $n\geq 2$,
    \[ \dfrac{G}{\gamma_n(G)}\simeq \dfrac{BS(ac,ad)}{\gamma_n(BS(ac,ad))}.\]

\begin{proof}
We will prove that both nilpotent quotients mentioned above share a common presentation and are therefore isomorphic. Recall that we have the following presentation:
    \[ BS(ac,ad)=\langle x,t \: | \: tx^{ac}t^{-1}=x^{ad}\rangle.\]
    Therefore, by Theorem \ref{quociente}, 
    \begin{equation} \label{presbsquoc}
        \dfrac{BS(ac,ad)}{\gamma_n(BS(ac,ad))}=\langle x,t \: | \: tx^{ac}t^{-1}=x^{ad}, [w_1,\dots ,w_n],\: w_i \in F(x,t)  \rangle.
    \end{equation}
For the group $G$, we have
    \[ \dfrac{G}{\gamma_n(G)}=\langle x,y,t \: | \: x^a=y^b, \: ty^ct^{-1}=y^d, [z_1,\dots ,z_n],\: z_i \in F(x,y,t)\rangle,\] where $F(x,t)$ and $F(x,y,t)$ denote the associated free groups.

Note that a similar argument as in Lemma \ref{auxiliar}, applied to the relation $ty^ct^{-1} = y^d$,
gives $y^{(d-c)^k} \in \gamma_{k+1}(G)$ for all $k \geq 0$. Since $G$ is $2$-generated, our earlier analysis shows that $\gcd(b,c,d)=1$ and that, for any prime $p$, if $p|b$ then $p|c$ or $p|d$. From this it follows that $\gcd(b,d-c)=1$ and, therefore, $\gcd(b,(d-c)^{n-1})=1$. Thus, there are $\alpha, \theta\in \mathbb{Z}$ such that
\begin{equation}\label{equation2gen}
    \alpha b+\theta (d-c)^{n-1}=1.
\end{equation} Hence,
\[
  y=y^{\alpha b}y^{\theta (d-c)^{n-1}}=x^{\alpha a}y^{\theta (d-c)^{n-1}}=x^{\alpha a} \pmod{\gamma_n(G)}.
\]
This way, by performing the Tietze transformation \cite[p.~89]{LyndonSchupp} of replacing $y$ by $x^{a\alpha}$, the relations in the presentation of $G/\gamma_n(G)$ become
\[
  x^a=y^b \Longleftrightarrow x^a=x^{\alpha ab} \Longleftrightarrow x^{a(1-\alpha b)}=1 \Longleftrightarrow x^{a\theta(d-c)^{n-1}}=1,
\]
\[
 ty^ct^{-1}=y^d \Longleftrightarrow y^{d-c}=[t,y^c] \Longleftrightarrow x^{\alpha a(d-c)}=[t,x^{\alpha ac}],
\]
and the relations $[z_1,\dots ,z_n]$ become all relations $[z_1,\dots ,z_n]$, with $z_i$ words in letters $x,t$. Therefore
    \[ \dfrac{G}{\gamma_n(G)}=\langle x,t \: | \: x^{a\theta(d-c)^{n-1}}=1, \: x^{\alpha a(d-c)}=[t,x^{\alpha ac}], \: [z_1,\dots ,z_n], \: z_i\in F(x,t)\rangle. \]
    Now, by Equation \ref{equation2gen} we clearly have
\[
a(d-c)=a(d-c)(\alpha b+\theta (d-c)^{n-1})= \alpha a(d-c) b+a\theta (d-c)^{n}.
\]
Therefore, using Lemma \ref{potenciascomutador}, Equation \ref{equation2gen} and the relations on $G/\gamma_n(G)$, it follows that
    \begin{align*}
        x^{a(d-c)} & =x^{\alpha a(d-c) b}x^{a\theta (d-c)^{n}} \\
                & =x^{\alpha a(d-c) b} \\
                   & =[t,x^{\alpha ac}]^b \\
                   & =[t,x^{ac}]^{\alpha b} \\
                   & =[t,x^{ac}]^{1-\theta (d-c)^{n-1}} \\
                   & =[t,x^{ac}][t,x^{ac}]^{-\theta(d-c)^{n-1}} \\
                   & =[t,x^{ac}][t,x^{ac(d-c)^{n-2}}]^{-\theta(d-c)}\\
                   & =[t,x^{ac}],
    \end{align*}
    since $x^{a(d-c)^{n-2}}=y^{b(d-c)^{n-2}}\in \gamma_{n-1}(G)$. 
    So, we have the relation $tx^{ac}t^{-1}=x^{ad}$ in $G/\gamma_n(G)$. Now we will prove that the group given by the presentation
    \[ \langle x,t \: | \: tx^{ac}t^{-1}=x^{ad}, \: [z_1,\dots ,z_n], \: z_i\in F(x,t)\rangle\]
   is exactly $G/\gamma_n(G)$. In fact, since $tx^{ac}t^{-1}=x^{ad}$, following the same arguments in the proof of Lemma \ref{potenciascomutador}, we have that $[t,x^{nac}]=[t,x^{ac}]^n$. Thus,
    \[ x^{\alpha a(d-c)}=(x^{a(d-c)})^{\alpha}=[t,x^{ac}]^{\alpha}=[t,x^{\alpha ac}].\]
    Denoting $y=x^{\alpha a}$, we have $y^{d-c}=[t,y^c]$. For the relation $x^a=y^b$, it is sufficient to prove that $x^{a\theta (d-c)^{n-1}}$=1, and this is true by Lemma \ref{auxiliar}. Therefore, using Tietze transformations \cite[p.~89]{LyndonSchupp}, we can rewrite the presentation of $G/\gamma_n(G)$ as:
    \[ \dfrac{G}{\gamma_n(G)}=\langle x,t \: | \: tx^{ac}t^{-1}=x^{ad}, \: [z_1,\dots ,z_n], \: z_i\in F(x,t)\rangle, \]
   which is precisely the presentation in Equation \ref{presbsquoc}. Therefore, we have the desired isomorphism.
    \end{proof}
\end{theorem}

\begin{theorem}\label{2geradosfinal}
   Let $G=\textnormal{loll}(a,b,c,d)$ be a $2$-generated lollipop GBS group. Then $G$ and $BS(ac,ad)$ have the same $R_{\infty}$-nilpotency degree $r$. Specifically, let $p$ denote the largest integer such that $2^p$ divides $2\frac{c}{\gcd(c,d)}+2$. We have:

\begin{itemize}
    \item If $d<0$ and $d \neq -c$, then $r=2$.
    \item If $d=c$ or $d=-c$, then $r=\infty$.
    \item If $d-c=\gcd(c,d)$, then $r=\infty$.
    \item If $d-c=2\gcd(c,d)$ and $ac,ad$ are not both even, then $r=2$.
    \item If $d-c=2\gcd(c,d)$ and $ac,ad$ are both even, then $2\leq r\leq p+2$.
    \item If $d-c\geq 3\gcd(c,d)$, then $r=2$.
\end{itemize}

\begin{proof}
By Theorem \ref{isomorfismodosquocientes}, $G$ and $BS(ac,ad)$ must have the same $R_{\infty}$-nilpotency degree $r$.
In the first case, we have $ad<0$ (since $a>0$) and $ad\neq -ac$. Thus, by Theorem \ref{DaciDekimpe}, the $R_{\infty}$-nilpotency degree of $BS(ac,ad)$ is $2$. In the second case, we have either $ad=ac$ or $ad=-ac$. Thus, by Theorem \ref{DaciDekimpe}, the $R_{\infty}$-nilpotency degree of $BS(ac,ad)$ is $r=\infty$. For the other cases, it is enough to note that, if $d-c=k\gcd(c,d)$, then $ad-ac=ka\gcd(c,d)=k\gcd(ac,ad)$. Thus, the $R_{\infty}$-nilpotency degree of $G$ (and of $BS(ac,ad)$) is given either by Theorem \ref{DaciDekimpe} or by Theorem \ref{refinement} (in the fourth case).
\end{proof}
\end{theorem}

\begin{remark}
Theorem \ref{pirulitosindice2} is subsumed by Theorem \ref{Sylow1}, as one can easily check. 
However, none of our three main theorems (\ref{notdivideb}, \ref{Sylow1} and \ref{2geradosfinal}) are subsumed by the other two. In fact, $\textnormal{loll}(7,2,54,174)$ satisfies Theorem \ref{notdivideb} but neither Theorem \ref{Sylow1} nor Theorem \ref{2geradosfinal}; the group $\textnormal{loll}(4,15,8,-16)$ satisfies Theorem \ref{Sylow1} but none of the other two; finally, the group $\textnormal{loll}(17,15,273,715)$ satisfies Theorem \ref{2geradosfinal} but none of the other two. We finish this remark by noting that $\textnormal{loll}(17,30,273,715)$, for instance, does not satisfy any of our results and we do not know its $R_\infty$-nilpotency degree.
\end{remark}

\begin{remark}\label{howmuch}
After computing the $R_\infty$-nilpotency degree for $\textnormal{loll}(a,b,c,d)$ in a variety of different conditions, it was not clear to us ``how much'' we had solved, that is, what was the percentage of lollipop GBS groups with determined $R_\infty$-nilpotency degree. So, we used the GAP \cite{GAP4} and R \cite{R} softwares to have an idea in finite sample spaces. Given $n\geq 2$, we implemented the ``$n$-algorithm'' as follows: ``\emph{given integers $a,b,c,d$ with $1 \leq |a|,|b|,|c|,|d| \leq n$ (and $1< |a|,|b|$, so that $\textnormal{loll}(a,b,c,d)$ is not automatically a $BS$ group), return `true' if $\textnormal{loll}(a,b,c,d)$ satisfies all hypotheses of at least one of theorems \ref{notdivideb}, \ref{Sylow1} and \ref{2geradosfinal}, and return `false' otherwise}''. The next graph and table show our results for different values of $n$ with $2 \leq n \leq 50$.

\begin{figure}[htb!]
\centering
\includegraphics[scale=0.55]{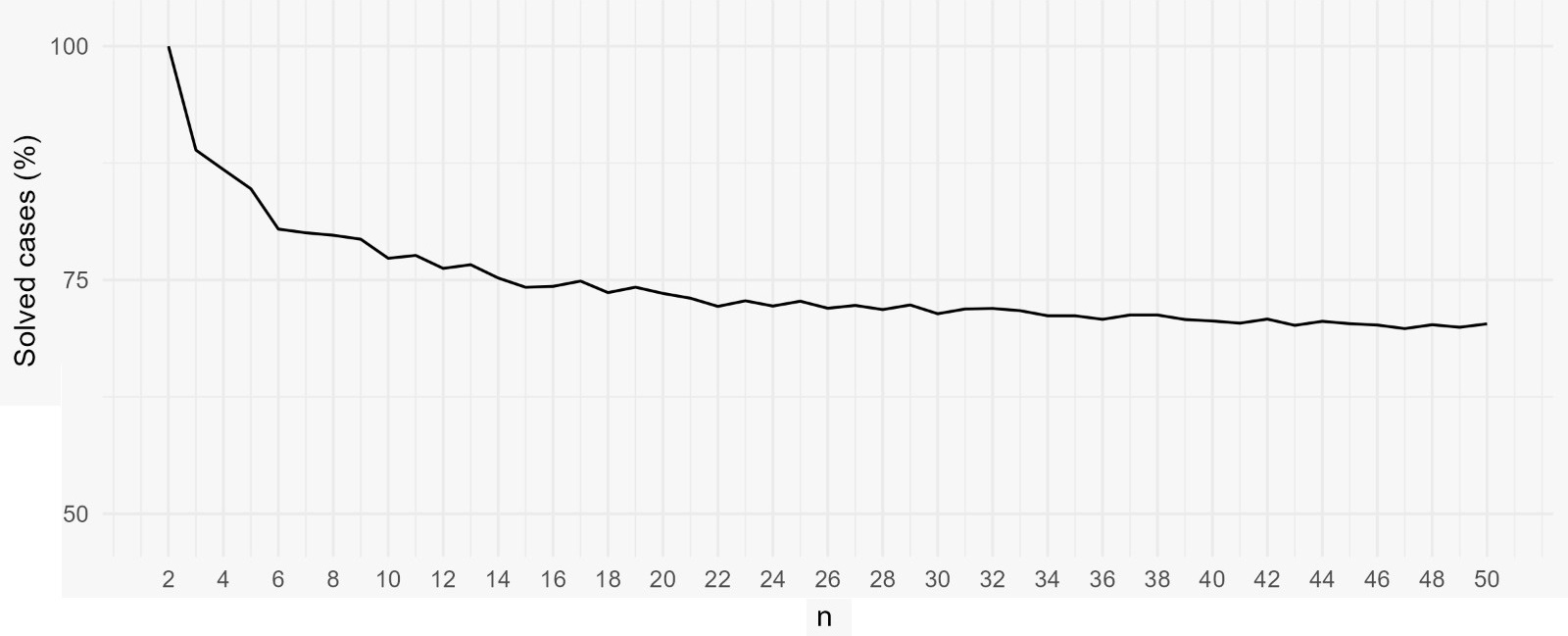}
\caption{percentage of solved cases for the $n$-algorithm}
\end{figure}

\vspace{0.5cm}

\begin{center}
\begin{tabular}{|c|c|c|c|}
\hline
n  & Number of groups ($(2n)^2(2n-2)^2$)  & Number of solved cases & Percentage \\
  \hline 
10  & 129,600 & 100,208 & 77.32\%\\
\hline
20  & 2,310,400 & 1,699,616 & 73.56\% \\
\hline
30    & 12,110,400 & 8,645,728 & 71.39\% \\
\hline
40    & 38,937,600 & 27,405,072 & 70.38\% \\
\hline
50    & 96,040,000 & 67,170,176 & 69.93\% \\
\hline
\end{tabular}
\end{center}

\vspace{0.5cm}

These results indicate a substantial coverage of cases, at least for small values of $n$. We informally believe that a comparable proportion of cases may remain covered for larger values of $n$.

\end{remark}

\end{document}